\def\paragraph{\@startsection{paragraph}{4}%
  \z@\z@{-\fontdimen2\font}%
  {\normalfont\bfseries}}
\theoremstyle{plain}
\newtheorem{theorem}{Theorem}[section]
\newtheorem{prop}[theorem]{Proposition}
\newtheorem{lemma}[theorem]{Lemma} 
\theoremstyle{definition}
\newtheorem{definition}[theorem]{Definition}
\theoremstyle{remark}
\newtheorem{remark}[theorem]{Remark}
\numberwithin{equation}{section}
\DeclareMathOperator{\divo}{Div}
\DeclareMathOperator{\dist}{dist}
\DeclareMathOperator{\supp}{supp}
\DeclareFontFamily{OT1}{pzc}{}
\DeclareFontShape{OT1}{pzc}{m}{it}{<-> s * [1.40] pzcmi7t}{}
\DeclareMathAlphabet{\mathpzc}{OT1}{pzc}{m}{it}
\newcommand{\Dm}[1]{D_{1#1}}
\newcommand{\Ds}[1]{D_{2#1}}
\newcommand{\R}{\mathbb{R}}
\newcommand{\Rc}{\ensuremath{\mathcal{R}}}
\renewcommand{\Mc}{\ensuremath{\mathcal{M}}}
\newcommand{\uc}{\ensuremath{\boldsymbol{\mathpzc{u}}}}
\newcommand{\Dc}{\ensuremath{\mathpzc{D}}}
\newcommand{\Hcc}{\ensuremath{\dot{\mathcal{H}}^1}}
\newcommand{\La}{\Lambda}
\newcommand{\Z}{\mathbb{Z}}
\newcommand{\N}{\mathbb{N}}
\newcommand{\cb}{}
\definecolor{fgreen}{HTML}{2ECC40}
\newcommand{\RFgL} 
{
       \Sigma^{\nabla}(h) 
}
\newcommand{\RFgR}[1] 
{
       \frac{1}{2} \int_0^1 \dot{h}(t)^2 \, dt - #1|\{t \in [0,1] : h(t) = 0 \}|
}
\newcommand{\RFlL} 
{
       \Sigma^{\Delta}(h)
}
\newcommand{\RFlR}[1] 
{
       \frac{1}{2} \int_0^1 \ddot{h}(t)^2 \, dt - #1|\{t \in [0,1] : h(t) = 0 \}|
}
\newcommand{\<}[2]{\ensuremath{\langle #1,\,#2\rangle}}
\newcommand{\E}{\mathcal{E}}
\newcommand{\G}{\mathcal{G}}
\def\subsubsection{\@startsection{subsubsection}{3}%
  \z@{.5\linespacing\@plus.7\linespacing}{-.5em}%
  {\normalfont\bfseries}}
\begin{document}

\title{Analysis of an atomistic model for anti-plane fracture}


\author{Maciej Buze}
\author{Thomas Hudson}
\author{Christoph Ortner}
\address{Mathematics Institute\\
  Zeeman Building\\
  University of Warwick\\
  Coventry\\
  CV4 7AL\\
  United Kingdom}
\curraddr{}
\email[M.~Buze]{m.buze@warwick.ac.uk}
\email[T.~Hudson]{t.hudson.1@warwick.ac.uk}
\email[C.~Ortner]{c.ortner@warwick.ac.uk}

\thanks{MB is supported by EPSRC as part of the MASDOC DTC, Grant No. EP/HO23364/1.}
\thanks{TH is supported by the Leverhulme Trust through Early Career Fellowship ECF-2016-526}
\thanks{CO is  supported by ERC Starting Grant 335120 and by EPSRC Grant EP/R043612/1}

\subjclass[2010]{65L20, 70C20, 74A45,74G20, 74G40, 74G65}

\keywords{crystal lattices, defects, fractures, regularity, lattice Green's function, convergence rates.}

\date{31 July 2019}

\dedicatory{}

\begin{abstract}
We develop a model for an anti-plane crack defect posed on a square lattice under an interatomic pair-potential with nearest-neighbour interactions. In particular, we establish existence, local uniqueness and stability of solutions for small loading parameters and further prove qualitatively sharp far-field decay estimates. The latter requires establishing decay estimates for the corresponding lattice Green's function, which are of independent interest.
\end{abstract}
\maketitle
\section{Introduction}
In crystalline solids, various aspects of material behaviour related to mechanical, electrical, and chemical properties are governed by the appearance of irregularities (defects) in their underlying lattice structure \cite{phillips_2001}. Typical crystalline defects include point defects, dislocations and cracks. Crystalline defects are inherently discrete 
objects and to accurately capture their mechanical behaviour it is 
crucial to construct models starting from atomistic principles. 
Establishing their mathematical foundations also enables a rigorous numerical analysis of various 
multi-scale simulation techniques such as \cite{KANZAKI,Sinclair,Mullins,TOP,P-Lin, Blanc2007,2013-atc.acta,2015-qmtb1}.

A general approach to describe a single localised defect embedded in a homogeneous host crystal was rigorously formalised in \cite{2013-disl,EOS2016,2017-bcscrew} for point defects and straight dislocations 
in Bravais lattices, then extended in \cite{2016-multipt} to point defects in multilattices. The overarching idea is to use a continuum model to specify the far-field behaviour where continuum theories such as continuum linear elasticity (CLE) are accurate, while employing the underlying atomistic model to capture the details of the defect core.

So far, however, this framework explicitly excluded cracks due to two challenges that do not arise for point defects and dislocations. Firstly, as is already evident when comparing CLE approaches to modelling screw dislocations and cracks (cf. \cite{hirth-lothe}), the latter involves a slower rate of decay of strain away from the defect core, which makes it more difficult to prove that the corresponding atomistic model is even well-defined. Furthermore, in order to employ an atomistic model in the presence of a crack, one has to consider a domain that is both discrete and inhomogeneous, since the crack breaks translational symmetry. A particularly limiting consequence of this is that before one can establish results about {\it regularity} of the resulting discrete elastic fields, one first has to prove the existence and decay properties of a {\it lattice Green's function, $\G$, in the crack geometry}. While the cases of point defects and dislocations permit a spatially homogeneous setup of the reference configuration, allowing us to obtain the lattice Green's function via the semi-discrete Fourier transform, this approach breaks down in the presence of a crack. 

The main purpose of this paper is the (non-trivial) extension of the theory to the case of an anti-plane crack defect. We overcome the problem of inhomogeneity of the domain and are able to prove existence and decay estimates for $\G$. The approach employed is centred around the observation that the problem of finding $\G$ can itself be cast as an instance of coupling between continuum and atomistic descriptions, as we prescribe the explicit continuum Green's function $G$ as a boundary condition. This construction ensures the existence of $\G$ and is then followed by a technically involved argument establishing the decay properties of $\G$.

To simplify the presentation, we restrict the analysis to a two--dimensional square lattice with nearest neighbour interactions together with an interatomic potential satisfying {\it anti-plane mirror symmetry}, as introduced in \cite{2017-bcscrew} (see Section \ref{conclusion} for more details). In particular, this assumption will ensure that the atomistic model is indeed well-defined. Most of the results readily translate to anti-plane models with finite interactions on a general two--dimensional Bravais lattice (in particular the triangular lattice), except for one technical step related to constructing a suitable locally isomorphic mapping from the defective lattice to a homogeneous one. Furthermore, drawing from the ideas developed in \cite{2017-bcscrew} it is expected one can also extend the results beyond models with mirror symmetry (again see Section \ref{conclusion} for a more detailed discussion). \\

\paragraph{Outline:}
In Section \ref{main} we introduce and describe in detail the model for an anti-plane crack defect together with underlying assumptions and state the main results, including the key construction of the lattice Green's function for the crack geometry $\G$. In Section \ref{setup} we describe the spatial and functional setup of the problem, stressing its discreteness and inhomogeneity. In Section \ref{model} the details of the atomistic model are discussed and the results establishing the existence and regularity of a solution are stated. Section \ref{Green} is dedicated to constructing $\G$ and establishing its decay properties. In Section \ref{numerics} we present a numerical scheme that tests the rate of decay of $\bar{u}$ and the resulting convergence analysis. We conclude with the proofs of the main results in Section \ref{proofs}.  

\section{Main results}\label{main}
\subsection{Discrete kinematics}\label{setup}
Let $\La$ denote the two dimensional square lattice defined as $\La := \{l - (\frac{1}{2},\frac{1}{2})\,|\,l \in \Z^2\}$. With the crack tip placed at the origin, we consider a crack opening along
\[
 \Gamma_0 := \{(x_1,0)\,|\,x_1 \leq 0\}.
\]
It is useful to distinguish the lines that include lattice points directly above and below $\Gamma_0$. These are defined as
\[
\Gamma_{\pm} := \big\{m \in \La\,\big|\, m_1 < 0\,\text{ and }\,m_2 = \pm \textstyle{\frac{1}{2}} \big\}.
\]
We refer to Figure \ref{fig:lattice_gamma} for a visualisation of this setup.
\begin{figure}[!htbp]
\centering
\includegraphics[width=0.65\linewidth]{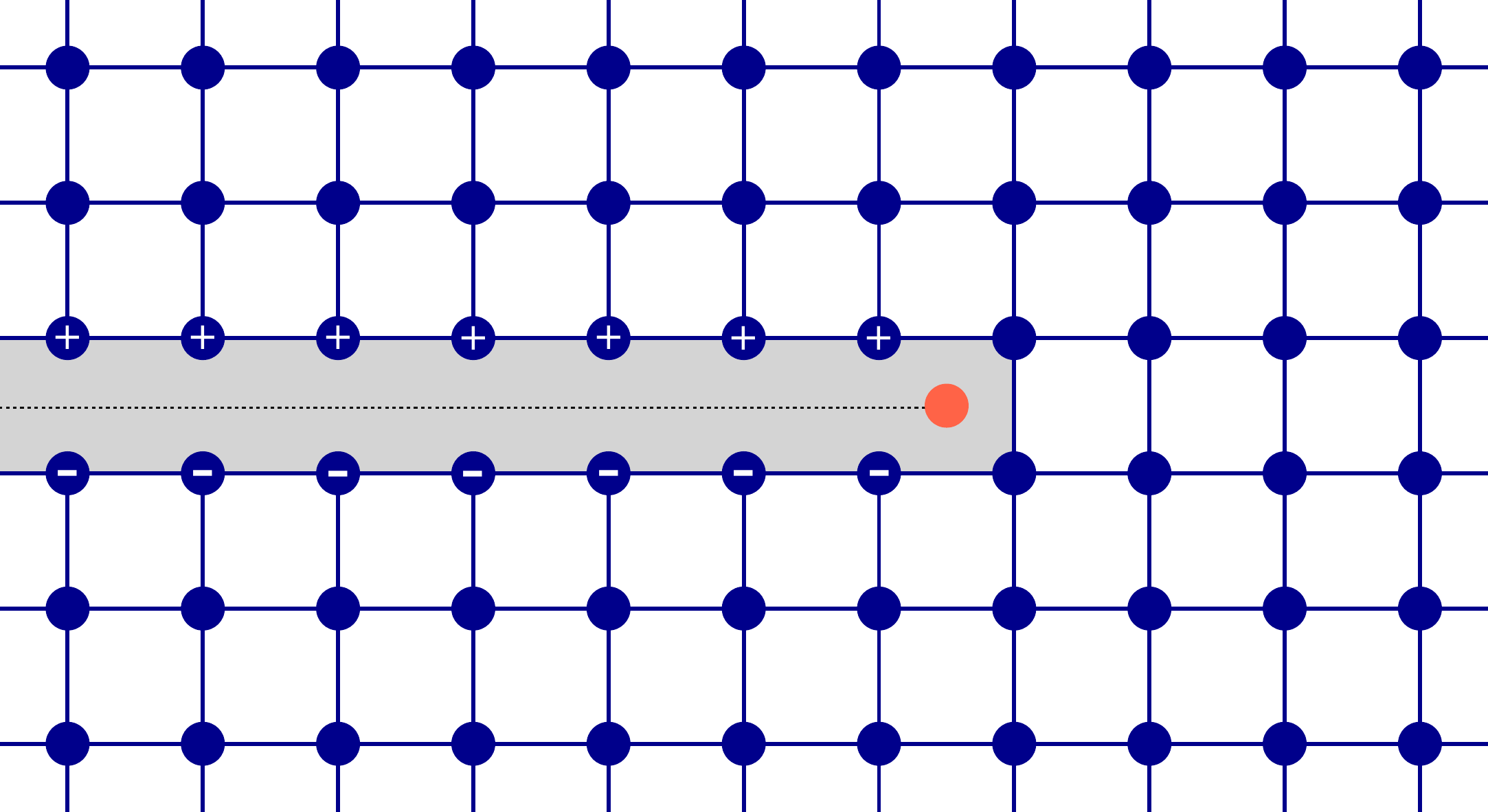}
\caption{The spatial setup of the problem with the crack tip depicted by a red dot, the crack cut $\Gamma_0$ by a dashed black line, the lattice points belonging to $\Gamma_+$ by dots with $+$ signs and the lattice points belonging to $\Gamma_-$ by dots with $-$ signs.} 
\label{fig:lattice_gamma}
\end{figure}
In the following we consider nearest-neighbour (NN) interactions between lattice sites, with the exception of sites on $\Gamma_{\pm}$, for which we adjust stencils to account for the presence of the crack. While the usual set of NN directions of the homogeneous square lattice is given by
{\cb \[
\Rc = \left\{e_1,e_2,-e_1,-e_2\right\},
\]}
we modify it as follows to accommodate the crack. For any $m \in \La$,  
\begin{equation}\label{Rcm}
\Rc(m) := \begin{cases} \Rc \quad & \text{for }\, m \not\in (\Gamma_+\cup\Gamma_-), \\
						\Rc \setminus \left\{\mp e_2\right\}\quad & \text{for }\, m \in \Gamma_{\pm}.\end{cases}.
\end{equation} 
For an anti-plane displacement $u\,\colon \,\La \to \R$, we define the finite difference operator as $D_{\rho}u(x) := u(x+\rho) - u(x)$ and the discrete gradient $Du(m) \in \R^{\Rc}$ as
\begin{equation}\label{dgrad}
\left(Du(m)\right)_{\rho} := \begin{cases}
D_{\rho}u(m)\quad &\text{if }\rho \in \Rc(m),\\
0\quad&\text{if }\rho \not\in\Rc(m).
\end{cases}
\end{equation}
As a result the gradient always lies in a four-dimensional space (as $|\Rc| = 4$) and if  $m\in \Gamma_{\pm}$ then $Du(m)$ has components corresponding to erased lattice directions set to zero.

Accordingly, we define the appropriate discrete Sobolev space as
\begin{align}\label{Hcc}
&\Hcc := \left\{u\,\colon\,\La \to \R \;|\; Du \in \ell^2\;\text{and}\; u(\hat{x}) = 0 \right\},\\
&\text{with }\|u\|_{\Hcc} := \|Du\|_{\ell^2} = \left(\sum_{m\in\La}|Du(m)|^2\right)^{\sfrac{1}{2}}. \nonumber
\end{align}
Here $\hat{x} = \left(\frac{1}{2},\frac{1}{2}\right)$ represents one of the lattice sites closest to the origin and the restriction $u(\hat{x})=0$ ensures that only one constant displacement lies in the space, thus making $\|\cdot\|_{\Hcc}$ a norm.
\subsection{Atomistic model for anti-plane fracture}\label{model}
Following the theory developed in \cite{EOS2016,2013-disl,2017-bcscrew} for point defects and straight dislocations, we formulate the static crack model as a minimisation problem
\begin{equation}\label{minprob}
\text{find}\quad\bar{u} \in \arg\min_{\Hcc} \E,
\end{equation}
with the energy difference functional given by
\begin{equation}\label{energy}
\E(u) = \sum_{m \in \La} V(D\hat{u}(m) + Du(m)) - V(D\hat{u}(m))
\end{equation}
where $V\,\colon\,\R^{\Rc} \to \R$ is an interatomic potential, $\hat{u}\,\colon\,\La\to\R$ is the {\em far-field predictor} and $u$ a core correction, thus giving us the actual displacement as $\hat{u} + u$. We choose the potential to be a NN pair-potential of the form
\begin{equation}\label{pair-potential}
V(Du(m)) = \sum_{\rho \in \Rc} \phi((Du(m))_{\rho}),
\end{equation}
with $\phi \in C^k(\R)$ for $k\geq 5$ satisfying {\cb $\phi(0)= 0$, $\phi(-r) = \phi(r)$ and $\phi''(0) = 1$. The assumption $\phi(0) = 0$ is made without loss of generality since we may always replace $\phi(r) \mapsto \phi(r) - \phi(0)$ without changing the energy difference. The assumption $\phi(-r) = \phi(r)$ is consistent with {\em anti-plane mirror symmetry} (see also \cite[Section 2.2.]{2017-bcscrew} and the relevant discussion in Section \ref{conclusion}). Finally $\phi''(0)=1$ may be assumed without loss of generality {\em as long as} $\phi''(0) > 0$ (upon replacing $\phi(r) \mapsto c \phi(r)$). This latter condition is equivalent to lattice stability \cite{E-Ming,2013-disl,EOS2016}, which is satisfied for virtually all bulk materials. Due to the symmetry $\phi(-r) = \phi(r)$ it follows that $\phi'(0)= \phi'''(0) = 0$, which we will use later on.}

The far-field predictor $\hat{u}$ is obtained from continuum linear elasticity (CLE), which is to be regarded as a boundary condition at infinity that imposes the existence of the defect. Following a standard procedure of pairing of the atomistic potential $V$  with its continuum counterpart $W\,\colon\,\R^2\to\R$ (the so-called Cauchy-Born strain energy function \cite{E-Ming,2012-ARMA-cb}){\cb , via $W(F):= V((F\cdot \rho)_{\rho\in \Rc})$}, the resulting CLE equation for $\hat{u}$ is given by 
\begin{align}
-\Delta \hat{u} &= 0\quad\text{in }\;\R^2\setminus \Gamma_0,\label{eqn-u-hat}\\
\nabla\hat{u} \cdot\nu &= 0\quad\text{on }\;\Gamma_0\setminus\{0\}\nonumber.
\end{align}
{\cb The particularly simple form of the equation is due to the fact that under the Cauchy-Born coupling we have  $\delta^2W(0) = c{\rm Id}$, as shown in \cite{2017-bcscrew}.} This equation has infinitely many solutions with the canonical choice being the sole solution that ensures local integrability near the crack tip and induces a stress which decays at infinity \cite{SJ12}. This solution can be characterised via the complex square root mapping $\omega\,\colon\,\R^2 \to \R^2$. In polar coordinates, $x = (r_x \cos{\theta_x},r_x\sin{\theta_x}) \in \R^2\setminus\Gamma_0$, it is given by
\begin{equation}\label{sqrt-map}
\omega(x) = (\omega_1(x),\omega_2(x)) = \left(\sqrt{r_x}\cos{\left(\sfrac{\theta_x}{2}\right)}, \sqrt{r_x}\sin{\left(\sfrac{\theta_x}{2}\right)}\right)
\end{equation}
and the canonical solution to \eqref{eqn-u-hat} is
\begin{equation}\label{upred}
\hat{u}(x) = \epsilon\,\omega_2(x).
\end{equation}
Here $\epsilon$ is a loading parameter with its magnitude corresponding to the size of the displacement on the opposite sides of the crack and its sign determining which side is being pulled up. Without loss of generality we assume $\epsilon \geq 0$.  As per Lemma \ref{gradomega} below, we further note that $|\nabla^j\hat{u}(x)| \lesssim |x|^{1/2-j}$ for any $j \in \N$.

\begin{remark}\label{CLE}
The premise of this formulation is two-fold. Firstly, it seeks to validate CLE as an accurate approximation of the atomistic effects away from the defect core in a crack defect setup. On the other hand, it also shows that the CLE solution can serve as an appropriate boundary condition for finite-domain numerical computation in a discrete setup, as tested in numerical tests described in Section \ref{numerics}.  
\end{remark}

\begin{remark}\label{rem-decay}
While in the case of an anti-plane screw dislocation the predictor $\hat{u}_s$ derived from CLE only just fails to be in the discrete energy space $\Hcc$ (namely $D\hat{u}_{\rm s} \in \ell^{2+\delta}$ for any $\delta > 0$, as described e.g. in \cite{hirth-lothe}), in the present case it is only true that $D\hat{u} \in \ell^{4+\delta}$. This phenomenon is a key reason why the analysis of a general crack defect is more involved. In the anti-plane case one way of circumventing it is to impose $\phi'''(0)=0$, but in a more general setup it is an open problem. We refer to Section \ref{conclusion} for an extended discussion.
\end{remark}
With the predictor and the interatomic potential specified, we can now state our main results.
\begin{theorem}\label{Eu-well-def}
The energy difference functional $\E$ in \eqref{energy} is well-defined on $\Hcc$ and $k$-times continuously differentiable. Furthermore, for $\epsilon$ sufficiently small, the minimisation problem \eqref{minprob} has a locally unique solution $\bar{u} \in \Hcc$ that depends continuously  on $\epsilon$ and satisfies strong stability, that is there exists $\lambda>0$ such that for all $v \in \Hcc$
\begin{equation}\label{stability}
\delta^2\E(\bar{u})[v,v] \geq \lambda \|v\|{\cb ^2}_{\Hcc}.
\end{equation}
\end{theorem}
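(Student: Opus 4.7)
The plan is to prove the three assertions of the theorem in sequence, with the central tool being a Taylor expansion of $V$ at $D\hat u$ combined with a quantitative inverse function theorem anchored at $(\epsilon, u) = (0,0)$.

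\textbf{Well-definedness and $C^k$ regularity.} For each $m$, Taylor-expand
\[
V(D\hat u(m) + Du(m)) - V(D\hat u(m)) = \delta V(D\hat u)[Du] + \tfrac{1}{2}\delta^2 V(D\hat u)[Du,Du] + R_m(u),
\]
where $R_m$ is the cubic-and-higher remainder. Since $\phi \in C^k$ and $D\hat u \in \ell^\infty(\La)$ with $\|D\hat u\|_\infty \lesssim \epsilon$ (because $|D\hat u(m)| \lesssim \epsilon(1+|m|)^{-1/2}$, attaining its maximum near the tip), the quadratic and cubic-plus terms are absolutely summable against $Du \in \ell^2$ via Cauchy--Schwarz and Hölder, giving continuous dependence on $u$. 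The delicate term is the linear one. Using $\phi'(0)=\phi'''(0)=0$ and $\phi''(0)=1$, write
\[
\phi'(D_\rho\hat u(m)) = D_\rho\hat u(m) + O(|D_\rho\hat u(m)|^3).
\]
The cubic correction produces $\sum_{m,\rho}|D\hat u|^3|Dv|$, which is $\ell^2$-summable since $(D\hat u)^3 \in \ell^2$ (as $|D\hat u|^6 \sim |m|^{-3}$ is summable in 2D). The leading piece $\sum_{m,\rho}D_\rho\hat u(m)D_\rho v(m)$ is handled by a discrete summation by parts, which reduces it to $\langle -\Delta^{\mathrm{cr}}_{\rm lat}\hat u, v\rangle$ where $\Delta^{\mathrm{cr}}_{\rm lat}$ is the crack-modified lattice Laplacian built from $\Rc(m)$. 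The consistency estimate yields $|\Delta^{\mathrm{cr}}_{\rm lat}\hat u(m)| \lesssim \epsilon(1+|m|)^{-7/2}$ away from the tip (the crack-adjacent modifications being harmless because $\partial_\nu\hat u=0$ on $\Gamma_0\setminus\{0\}$ kills the leading-order normal defect). Combined with the crude growth bound $|v(m)| \leq (1+|m|)^{1/2}\|v\|_{\Hcc}$ obtained by summing $|Dv|$ along a lattice path from $\hat x$, this gives $|\langle\Delta^{\mathrm{cr}}_{\rm lat}\hat u,v\rangle| \lesssim \epsilon\|v\|_{\Hcc}\sum_m (1+|m|)^{-3} < \infty$. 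The same type of estimates, applied term by term, prove that $\E$ is $k$-times Fréchet differentiable with continuous derivatives.

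\textbf{Stability and existence.} Define $F \colon \Hcc \times [0,\epsilon_0) \to (\Hcc)^*$ by $F(u,\epsilon) := \delta \E_\epsilon(u)$. At $(0,0)$ one has $\hat u \equiv 0$, hence $F(0,0)=0$ and
\[
\delta^2\E_0(0)[v,v] = \sum_{m}\sum_{\rho\in\Rc(m)}\phi''(0)|D_\rho v(m)|^2 = \|v\|_{\Hcc}^2,
\]
so $D_uF(0,0)=\mathrm{Id}_{\Hcc}$. Continuity of $\delta^2\E$ (Step 1) together with $\|D\hat u\|_\infty \lesssim \epsilon$ gives
\[
|\delta^2\E_\epsilon(u)[v,v] - \|v\|_{\Hcc}^2| \lesssim (\epsilon + \|u\|_{\Hcc})\|v\|_{\Hcc}^2,
\]
so strong stability \eqref{stability} with $\lambda=\tfrac{1}{2}$ holds uniformly on a ball $B_{\delta_0}(0) \times [0,\epsilon_0)$. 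Since $F$ is $C^{k-1}$ in both variables (the $\epsilon$-dependence being smooth through $\hat u = \epsilon\omega_2$), the quantitative inverse function theorem produces a unique $C^{k-1}$ branch $\epsilon \mapsto \bar u(\epsilon) \in B_{\delta_0}(0)$ with $F(\bar u(\epsilon),\epsilon)=0$; the stability estimate applied at $u=\bar u(\epsilon)$ then yields that $\bar u(\epsilon)$ is a strict local minimiser, continuous (in fact $C^{k-1}$) in $\epsilon$, and satisfies \eqref{stability}.

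\textbf{Main obstacle.} The heart of the argument lies in Step 1. Because $D\hat u$ only belongs to $\ell^{4+\delta}$ (see Remark \ref{rem-decay}), the linear term in the energy expansion is not directly absolutely convergent against $Dv$; its convergence rests critically on the combination of (i) the mirror-symmetry hypothesis $\phi'''(0)=0$, which pushes the first non-harmonic correction to order $(D\hat u)^3 Du$ and hence into $\ell^1$, and (ii) a discrete integration-by-parts reducing the remaining leading term to a CLE consistency residual with fast polynomial decay. Carrying both of these out requires careful bookkeeping along $\Gamma_\pm$ because the modified stencils $\Rc(m)$ break the usual translation-invariance of summation by parts; this is the only point where the crack geometry materially complicates the standard point-defect/dislocation arguments.
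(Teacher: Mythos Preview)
Your proposal is essentially correct and covers all parts of the theorem, but the route you take for the well-definedness step is genuinely different from the paper's, and one step of your argument is more delicate than your write-up suggests.

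\textbf{Comparison.} For the critical linear term $\sum_m D\hat u\cdot Dv$, the paper stays in divergence form: it builds a P1 interpolant $Iv$ (with careful boundary extension onto the strip between $\Gamma_0$ and $\Gamma_\pm$), uses $0=\int\nabla\hat u\cdot\nabla Iv$, and estimates the discrepancy bond-by-bond as a quadrature error paired against $D_\rho v$. This yields $|\nabla^3\hat u|\sim|m|^{-5/2}$ in the bulk and only $|\nabla^2\hat u|\sim|m|^{-3/2}$ along $\Gamma$, which suffices because the latter is $\ell^2$ on a one-dimensional set. Your approach instead sums by parts to $\langle H\hat u,v\rangle$ and pairs against the growth bound $|v(m)|\lesssim|m|^{1/2}\|v\|_{\Hcc}$. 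This is more elementary and avoids the interpolation construction, but it trades a loss in the $v$-estimate for a requirement of a \emph{sharper} consistency estimate on $\Gamma_\pm$. The paper's interpolation machinery is reused later (Green's function, Theorem~\ref{thm::conv}), which your approach does not provide.

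\textbf{The delicate point.} Your claimed rate $|H\hat u(m)|\lesssim|m|^{-7/2}$ on $\Gamma_\pm$ is correct, but the justification ``kills the leading-order normal defect'' understates what is needed. If only the leading $\partial_2\hat u\sim|m|^{-1/2}$ term were cancelled, the residual on $\Gamma_\pm$ would be $O(|m|^{-3/2})$, and paired against $|v|\lesssim|m|^{1/2}$ this gives $\sum_{m\in\Gamma}|m|^{-1}$, which diverges. What actually happens is that the Neumann condition forces \emph{all odd normal derivatives} $\partial_2^{2k+1}\hat u$ to vanish on $\Gamma_0$ (equivalently, $\hat u$ has an even Schwarz reflection across each crack face), and combined with harmonicity this produces cancellation in $(H\hat u)(m)=-2\sum_{\rho\in\Rc(m)}D_\rho\hat u(m)$ all the way down to $O(\nabla^4\hat u)=O(|m|^{-7/2})$. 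You should make this explicit; without it the argument as written has a gap precisely along $\Gamma_\pm$.

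The existence/stability part via the implicit function theorem at $(\epsilon,u)=(0,0)$ matches the paper's argument essentially verbatim.
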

For the proof, see Section \ref{P-u}.
\begin{theorem}\label{thm::u}
For any $\epsilon \geq 0$, every critical point of the energy difference functional $\E$ in \eqref{energy} satisfies 
\begin{equation}\label{Du-est}
|D\bar{u}(l)| \lesssim \epsilon |l|^{-3/2+\delta},
\end{equation}
for any $\delta>0$ and $|l|$ large enough. 
\end{theorem}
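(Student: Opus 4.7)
My plan is to represent the critical point $\bar u$ via the lattice Green's function $\G$ constructed in Section~\ref{Green}, and to bootstrap the energy-space bound $D\bar u\in\ell^2$ provided by Theorem~\ref{Eu-well-def} into the claimed pointwise decay. The starting point is the Euler--Lagrange equation $\delta\E(\bar u)[v]=0$ for $v\in\Hcc$. Writing $\phi'(r)=r+g(r)$ with $g(r)=O(r^3)$ (the quadratic term being absent thanks to $\phi'''(0)=0$, which together with $\phi''(0)=1$ is the structural input that makes the argument close) and summing by parts with the modified stencil $\Rc(\cdot)$, I would rewrite the equation as
\begin{equation*}
\mathcal{L}\bar u(l) \;=\; f(l) \,+\, \mathcal{N}[\bar u](l), \qquad l\in\La,
\end{equation*}
where $\mathcal{L}$ is the crack-geometry discrete Laplacian associated with $\Rc(\cdot)$, $f(l):=-\mathcal{L}\hat u(l)$ is the consistency residual between the continuum predictor and the atomistic equations, and $\mathcal{N}[\bar u](l)$ is a discrete divergence of $g(D\hat u+D\bar u)$, cubic in its argument.

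Next I would quantify $f$ pointwise. Away from $\Gamma_\pm$ and the crack tip, $\mathcal{L}$ reduces to the standard nearest-neighbour Laplacian, and a fourth-order Taylor expansion combined with $\Delta\hat u=0$ yields $|f(l)|\lesssim\epsilon|l|^{-7/2}$. For $l\in\Gamma_\pm$ the missing neighbour in the stencil naively leaves an error proportional to $\partial_2\hat u(l)$; this term is however cancelled to leading order by the Neumann condition $\nabla\hat u\cdot\nu=0$ on $\Gamma_0$ once the Taylor expansion is re-centred at the projected boundary point $(l_1,0)$, producing $|f(l)|\lesssim\epsilon|l|^{-5/2}$. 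The Green's function then furnishes the representation
\begin{equation*}
D\bar u(l) \;=\; \sum_{m\in\La} D\G(l,m)\bigl(f(m)+\mathcal{N}[\bar u](m)\bigr),
\end{equation*}
with $D$ acting on the first argument of $\G$.

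The final step is a bootstrap. Starting from $D\bar u\in\ell^2$, I would use H\"older's inequality together with the decay estimates for $D\G$ established in Section~\ref{Green} to extract an initial pointwise bound $|D\bar u(l)|\lesssim\epsilon|l|^{-\alpha_0}$ for some $\alpha_0>0$. The cubic structure of $\mathcal{N}$ is essential here: pairing one $D\bar u$ factor with $|D\hat u|^2\lesssim|l|^{-1}$ keeps the nonlinear contribution under control without circularity. Substituting the improved bound back into $\mathcal{N}$ and iterating finitely many times raises $\alpha_0$ up to the borderline value $3/2$, with the arbitrarily small $\delta>0$ arising at the terminal step from a logarithmically divergent convolution that can only be absorbed into the exponent. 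I expect the main obstacle to be the convolution $\sum_{m}D\G(l,m)f(m)$ itself: the crack breaks translation invariance, so $\G(l,m)$ depends on both arguments separately and carries an additional $|l|^{-1/2}$ weight reflecting the stress singularity at the tip. The sum will have to be split into an interior zone $|m|\ll|l|$, a diagonal zone $|m-l|\ll|l|$, and a far zone $|m|\gg|l|$, with a separate treatment of the strip $\Gamma_\pm$ on which $f$ only decays like $|m|^{-5/2}$.
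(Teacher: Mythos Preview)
Your overall strategy---Green's function representation plus a bootstrap starting from $D\bar u\in\ell^2$---matches the paper's, but the specific implementation does not close at the sharp exponent. The representation you write pairs the \emph{single} derivative $D_1\G(l,m)$ with pointwise values of $f$ and $\mathcal N$; however, Theorem~\ref{thm::G} (the only Green's function estimate stated in Section~\ref{Green}) controls the \emph{mixed} derivative $D_1D_2\G$, not $D_1\G$. Even granting the natural bound $|D_1\G(l,m)|\lesssim(|\omega_l|\,|\omega^-_{lm}|)^{-1}$, the convolution with $|f(m)|\lesssim|m|^{-7/2}$ in the interior zone $|m|\ll|l|$ (where $|\omega^-_{lm}|\sim|\omega_l|$) yields only
\[
|\omega_l|^{-2}\sum_{|m|\leq|l|/2}|m|^{-7/2}\;\lesssim\;|\omega_l|^{-2}=|l|^{-1},
\]
a full half-power short of the target. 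Summing by parts to bring in $D_1D_2\G$ does not help either: the resulting integrand is $D\hat u(m)\sim|m|^{-1/2}$, which is not summable against the mixed Green's function bound. Your surface estimate $|f(l)|\lesssim|l|^{-5/2}$ is also too optimistic: re-centring at $(l_1,0)$ and invoking the Neumann condition kills the leading $\partial_2\hat u$ contribution, but an $O(|\nabla^2\hat u|)=O(|l|^{-3/2})$ term remains and cannot be improved by the boundary condition.

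The paper avoids all of this by never forming the pointwise residual $f=-H\hat u$. It works weakly: taking $v(m)=D_{2\tau}\G(m,l)$ as a test function yields $D_\tau\bar u(l)=\sum_m D\bar u(m)\cdot D_1D_{2\tau}\G(m,l)$ directly, so only the mixed derivative (for which Theorem~\ref{thm::G} is tailored) appears. The critical-point equation then replaces $D\bar u$ by terms involving $\phi'(D\hat u)$, and the dangerous linear piece $\sum_m D\hat u(m)\cdot Dv(m)$ is handled not pointwise but by the interpolation argument already developed in the proof of Theorem~\ref{Eu-well-def} (subtracting the vanishing continuum integral $\int\nabla\hat u\cdot\nabla Iv$). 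This replaces $D\hat u$ by quantities decaying like $|\nabla^3\hat u|\sim|m|^{-5/2}$ in the bulk and $|\nabla^2\hat u|\sim|m|^{-3/2}$ on the surface, which is exactly what is needed to close the convolution at $|l|^{-3/2+\delta}$; the bootstrap for the nonlinear part then proceeds essentially as you outline.
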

For the proof, see Section \ref{P-u2}. The sharpness of this result is tested numerically in Section \ref{numerics}. The appearance of arbitrarily small $\delta > 0$ in \eqref{Du-est} is due to the way we construct the lattice Green's function, as discussed after Theorem \ref{thm::G} below. \\
\subsection{Discrete Green's function for anti-plane crack geometry}\label{Green}
In order to establish Theorem \ref{thm::u}, we need to discuss the notion of a Green's function in the setup of anti-plane crack. We begin by defining the discrete divergence operator 
\begin{equation}\label{ddiv}
\divo g(m) := -\sum_{\rho \in \Rc}g_{\rho}(m-\rho) - g_{\rho}(m),\quad\text{for }g\,\colon\,\La\to\R^{\Rc}
\end{equation}
and discussing the lattice Hessian operator for the crack geometry. Adapting the general formulation from \cite{EOS2016} to the case of a pair-potential defined in \eqref{pair-potential} we have
\begin{equation}\label{h-simple}
\<{Hu}{v} = \sum_{m \in \La} Du(m)\cdot Dv(m)\implies Hu(m) = -\divo Du(m)\quad\forall m \in \La,
\end{equation}
with the pointwise formulation following from summation by parts.

\begin{definition}\label{G-def}
A function $\G\,\colon\,\La \times \La \to \R$ is said to be a lattice Green's function $\G$ for the anti-plane crack geometry if for all $m,s \in \La$,
\begin{subequations}
\label{G-var-sym+delta}
\begin{align}
H\G(m,s) &= \delta_{ms} \label{G-delta} \\
\G(m,s) &= \G(s,m), \label{G-var-sym}
\end{align}
\end{subequations}
where $\delta_{ms}$ denotes the Kronecker delta and $H$ is applied with respect to first variable.  
\end{definition}
We note that in \eqref{G-delta} one can view $s$ as a parameter and $H$ as a difference operator applied with respect to the first variable. However, due to \eqref{G-var-sym}, it is also true that \eqref{G-delta} holds with $H$ applied with respect to the second variable. Furthermore, $\G$ is not uniquely determined, since any discretely harmonic function can be added, i.e. if $v\,\colon\,\La\to\R$ is such that $Hv = 0$, then $\G(m,s) + v(m) + v(s)$ also satisfies \eqref{G-var-sym+delta}. 

For functions in two variables such as $\G$, we introduce a notation for finite differences as follows. If $v\,\colon\,\La\times\La \to \R$, then 
\[
\Dm{\rho}v(m,l) := v(m+\rho,l) - v(m,l)\quad\text{ and }\quad\Ds{\rho}v(m,l) := v(m,l+\rho) - v(m,l)
\]
and for $j\in\{1,2\}$, we have $D_j v(m^1,m^2) \in \R^{\Rc}$ with
\begin{equation}\label{dgrad2}
\left(D_j v(m)\right)_{\rho} := \begin{cases}
D_{j\rho}v(m^1,m^2)\quad &\text{if }\rho \in \Rc(m^j),\\
0\quad&\text{if }\rho \not\in\Rc(m^j).
\end{cases}
\end{equation}
For any $\G$ satisfying Definition \ref{G-def}, any solution $\bar{u}$ to \eqref{minprob} can be rewritten as
\[
D_{\tau}\bar{u}(l) = \sum_{m\in\La}\left(H(\Ds{\tau}\G(m,l)\right)\bar{u}(m) = \sum_{m\in\La}D\bar{u}(m)\cdot \Dm{}\Ds{\tau}\G(m,l),
\]
hence the result that enables us to prove Theorem \ref{thm::u} consists in finding a lattice Green's function that has desired decay properties of its mixed derivative. 
\begin{theorem}\label{thm::G}
There exists a lattice Green's function $\G\,\colon\,\La\times\La\to\R$ satisfying Definition \ref{G-def} such that, for any $\delta >0$, $\rho \in \Rc(l)$, and $\sigma \in \Rc(s)$,
\[
|\Dm{\rho}\Ds{\sigma}\G(l,s)| \lesssim (1+|\omega(l)||\omega(s)||\omega(l)-\omega(s)|^{2-\delta})^{-1},
\]
where $\omega$ is the complex square root map defined in \eqref{sqrt-map}.
\end{theorem}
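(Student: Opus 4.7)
The plan is to mimic the continuum--atomistic decomposition already used for $\bar u$, now with the explicit continuum Green's function of the cracked Neumann Laplacian playing the role of the far-field predictor. The complex square-root map $\omega$ is conformal and sends $\R^2 \setminus \Gamma_0$ onto the half-plane $\{\xi_1 > 0\}$, with the Neumann condition on $\Gamma_0$ becoming the Neumann condition on the axis $\{\xi_1 = 0\}$; reflection across this axis then yields the explicit formula
\[
G(x, y) \;=\; -\frac{1}{2\pi}\Bigl(\log|\omega(x) - \omega(y)| + \log\bigl|\omega(x) - (-\omega_1(y), \omega_2(y))\bigr|\Bigr)
\]
for the continuum Green's function on the cracked domain. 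A direct chain-rule calculation, using $|\nabla\omega(x)| \sim |\omega(x)|^{-1}$, shows $|\nabla_x\nabla_y G(x,y)| \lesssim (|\omega(x)||\omega(y)||\omega(x) - \omega(y)|^2)^{-1}$, which already matches the target bound in the theorem with no $\delta$-loss.

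Next, for each $s \in \La$ I set $R(\cdot, s) := H G(\cdot, s) - \delta_{\cdot, s}$ and estimate this residual. Away from $s$ and from $\Gamma_0$, Taylor expansion of the stencil yields $|R(m,s)| \lesssim |\nabla^3 G(m, s)|$, which has controlled decay via the $\omega$-coordinate estimates; near $s$ and near $\Gamma_{\pm}$ (where the modified stencil $\Rc(m)$ differs from $\Rc$) there are additional localised contributions. These estimates place $R(\cdot, s) \in \Hcc^{*}$. Since $\langle H u, u\rangle = \|Du\|_{\ell^2}^2 = \|u\|_{\Hcc}^2$, the operator $H$ is coercive on $\Hcc$, so Lax--Milgram produces a unique corrector $g(\cdot, s) \in \Hcc$ solving $H g(\cdot, s) = -R(\cdot, s)$. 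Defining $\tilde\G(m,s) := G(m,s) + g(m,s)$ satisfies \eqref{G-delta}, and symmetrising by $\G(m,s) := \tfrac{1}{2}\bigl(\tilde\G(m,s) + \tilde\G(s,m)\bigr)$ enforces \eqref{G-var-sym} while preserving \eqref{G-delta} thanks to selfadjointness of $H$ (with the freedom to add a discretely harmonic function in each variable absorbed via the normalisation $u(\hat x) = 0$).

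For the decay bound I split $\Dm{\rho} \Ds{\sigma} \G = \Dm{\rho} \Ds{\sigma} G + \Dm{\rho} \Ds{\sigma} g$. The continuum part is handled by interpolating the chain-rule estimate above onto the nearest-neighbour stencil (using the smoothness of $G$ away from $s$ and $\Gamma_0$), yielding exactly the target decay. The substantive work concerns the lattice correction $g$: one must convert the decay of $R(\cdot, s)$ into pointwise decay of the mixed difference $\Dm{\rho} \Ds{\sigma} g(l,s)$, which I plan to carry out by a discrete elliptic regularity argument in the $\omega$-weighted geometry, organised as a bootstrap along annular scales set by $|\omega(l) - \omega(s)|$, $|\omega(l)|$ and $|\omega(s)|$. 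I expect this step to be the main obstacle, since it must simultaneously accommodate the square-root singularity at the crack tip, the slow $\omega$-coordinate decay of $R$, and the loss of translation invariance introduced by the modified stencil along $\Gamma_{\pm}$. The arbitrarily small exponent $\delta > 0$ in the theorem enters precisely here, through a borderline weighted $\ell^2$ / Calderón--Zygmund-type embedding on the cracked lattice; this is the technically involved argument foreshadowed in the introduction.
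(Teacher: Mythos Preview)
Your existence outline is close to the paper's, but the symmetrisation step has a real gap. Averaging $\tfrac12\bigl(\tilde\G(m,s)+\tilde\G(s,m)\bigr)$ does \emph{not} preserve \eqref{G-delta}: self-adjointness of $H$ would yield symmetry of $H^{-1}$ only if $\tilde\G(\cdot,s)$ were an admissible test function, but $\tilde\G(\cdot,s)\notin\Hcc$ (logarithmic growth). The paper instead first proves that the mixed second differences are already symmetric, $D_{1\lambda}D_{2\tau}\tilde\G(l,s)=D_{2\lambda}D_{1\tau}\tilde\G(s,l)$ (their Lemma~\ref{G-side-lemma2}), integrates this to $\tilde\G(m,s)-\tilde\G(s,m)=F(m)-F(s)$ with $F(\cdot)=\tilde\G(\cdot,\hat x)$, and sets $\bar\G(m,s):=\tilde\G(m,s)+\tilde\G(s,\hat x)$. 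This correction is constant in $m$, so \eqref{G-delta} survives and the mixed derivative is unchanged; your average, by contrast, picks up an extra $\tfrac12 H_m\tilde\G(m,\hat x)=\tfrac12 R(m,\hat x)\neq0$. Also note that placing $R(\cdot,s)\in(\Hcc)^*$ is not a pointwise matter: $v\mapsto v(s)$ is unbounded on $\Hcc$ in two dimensions, so you need the paper's interpolation/quadrature comparison of $\sum D\hat\G\cdot Dv$ with $C_\La\int\nabla\hat G\cdot\nabla Iv$, not just $|R|\lesssim|\nabla^3 G|$.

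Your decay sketch identifies the right phenomenology (annular scales, bootstrap, $\delta$-loss) but omits the two mechanisms that actually make the paper's argument run. First, the local elliptic regularity is not abstract Calder\'on--Zygmund: one tests against $v(m)=D_\tau\G^{\mathrm{hom}}(m-l)\,\eta(m)$ with $\G^{\mathrm{hom}}$ the translation-invariant lattice Green's function and a cutoff $\eta$ at scale $R=|\omega_l||\omega_l-\omega_s|$, splitting into an ``equation'' term $S_1$ (handled by the residual estimates) and a cutoff term $S_2$ on an annulus. Second, and this is the step you do not anticipate: when $\supp\eta$ crosses $\Gamma$, the crack stencil and the homogeneous stencil disagree, and the paper resolves this by building a discrete Riemann surface $\mathcal M=\Z^2\times\{\pm1\}$ (two reflected copies of the lattice glued along the crack), extending $\bar\G$ to $\mathcal M$ so that it remains critical for an extended energy, and running the cutoff argument there. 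The bootstrap is organised via a three-region split $\Omega_1(s)\cup\mathcal A(s)\cup\Omega_2(s)$, with a pointwise bound on the annulus $\mathcal A(s)$ and $\ell^2$ norm bounds on $\Omega_i(s)$; these are iteratively upgraded by feeding the partially-estimated $\G$ back into itself, and the $\delta$-loss arises because this iteration only approaches (geometrically, with ratio $\tfrac12$) the predictor's rate without ever reaching it (Lemma~\ref{lem-norm-est5}).
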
 
The approach we employ is based on the observation that finding $\G$ can also be cast as a predictor-corrector problem, with the decomposition $\G = \hat{\G} + \bar{\G}$, where $\hat{\G}$ has an explicit formula and $\bar{\G}$ belongs to the energy space $\Hcc$ in both variables. This idea has already been explored in \cite{EOS2016}, but was notably aided by the applicability of Fourier methods due to the spatial homogeneity of the reference configuration. The novelty of our work stems from the fact that the discreteness and inhomogeneity of the domain means that Fourier analysis is no longer applicable. In particular, it renders the task of establishing the decay estimates on $\G$ much more challenging. In our approach we first establish suboptimal estimates on $\bar{\G}$ with the help of the homogeneous lattice Green's function $\G^{\rm hom}$ employed together with suitably chosen cut-offs and a local mapping onto a discrete Riemann surface corresponding to the complex square root. We then use this initial estimate in a boot-strapping argument. The appearance of arbitrarily small $\delta > 0$ follows from the fact that this argument saturates at the known decay of $\hat{\G}$.

\begin{remark}
While $|\omega(m)| = |m|^{-1/2}$, in general it is not true that $|\omega(m)-\omega(s)| \sim |m-s|^{-1/2}$, as in fact
\begin{equation}\label{csqrt-id}
|m-s| = |\omega(m)-\omega(s)||\omega(m)+\omega(s)|.
\end{equation}
The estimate is thus expressed in terms of $\omega$-map, as one can then conveniently resort to a change of variables $\xi = \omega(m)$ when working with $\G$. See Figure \ref{fig:distorted_lattice}. 
\end{remark}
\begin{figure}[!htbp]
\centering
\begin{minipage}{0.48\textwidth}
\raggedleft
\includegraphics[width=0.8\linewidth]{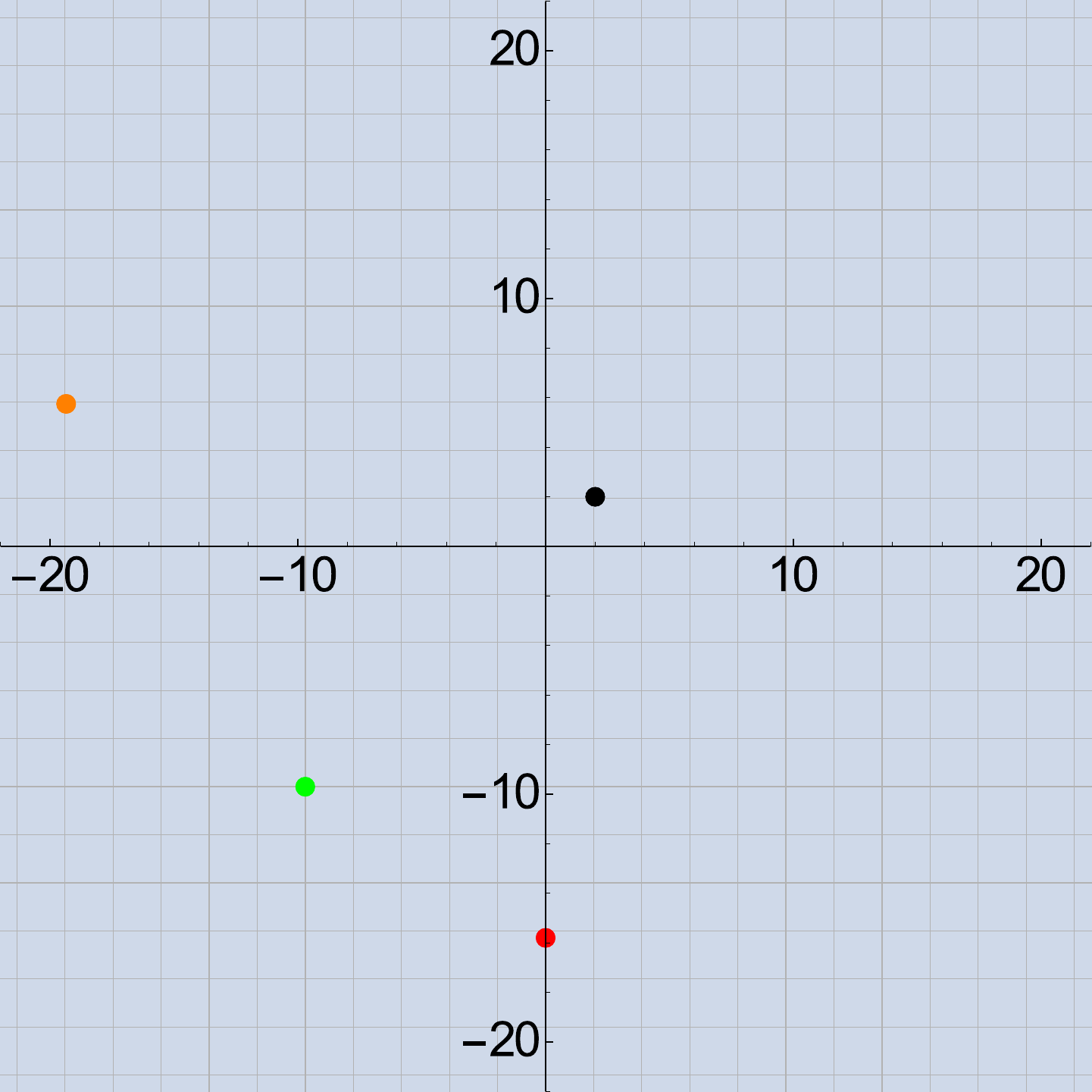}
\end{minipage}
\begin{minipage}{0.48\textwidth}
\raggedright
\includegraphics[width=0.8\linewidth]{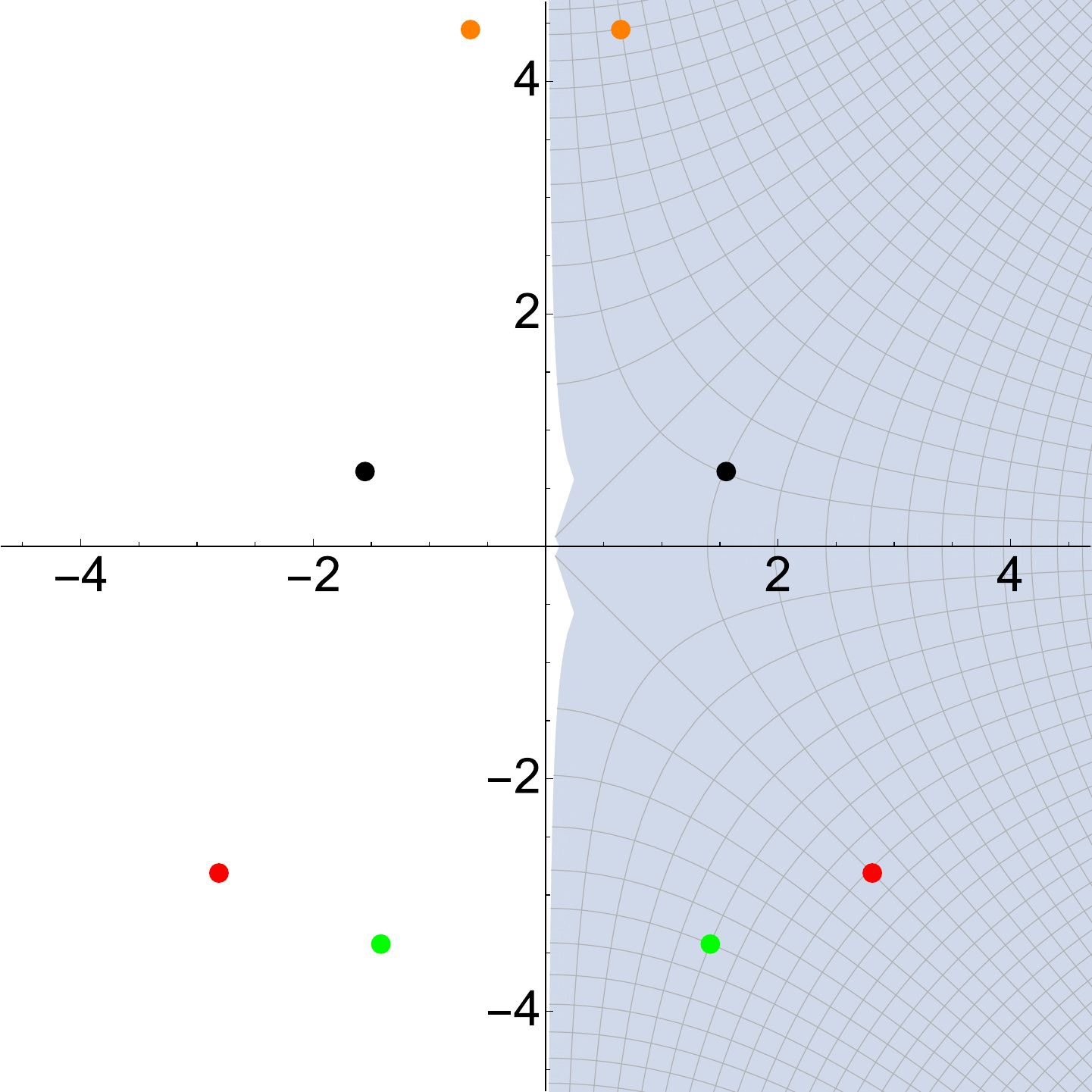}
\end{minipage}
\caption{The complex square root $\omega$ maps the square lattice (left) onto a distorted half-space lattice (right). In particular, the distorted lattice lives in $\R^2_+$, the half-space with positive first coordinate. The dots represent lattice points and their images under $\omega$ and also their reflections across $y$-axis.}
\label{fig:distorted_lattice}
\end{figure}
\subsection{Rate of convergence to the thermodynamic limit}
In this section we consider a supercell approximation to \eqref{minprob} on a finite domain confined to a ball of radius $R$ and establish the rate of convergence as $R\to\infty$.

The setup is similar to the one descibed in \cite{EOS2016,2017-bcscrew}, that is we consider a domain $B_R \cap \La \subset \Omega_R \subset \La$ with the boundary condition $\hat{u}$ on $\La\setminus \Omega_R$ and state it as a Galerkin approximation
\begin{equation}\label{minprob_approx}
\text{find }\,\bar{u}_R \in \arg\min_{\mathcal{H}^0_R} \E,
\end{equation}
\[
\mathcal{H}^0_R := \{v\,\colon\,\La\to\R \,|\,v = 0 \text{ in } \La\setminus\Omega_R\}.
\]
We prove the following.
\begin{theorem}\label{thm::conv}
If $\bar{u}$ is a solution to \eqref{minprob} that is strongly stable in the sense of \eqref{stability}, then for all $\beta > 0$, there exist $C,R_0 > 0$ such that for all $R > R_0$, there exists a stable solution $\bar{u}_R$ to \eqref{minprob_approx} satisfying
\[
\|\bar{u}_R - \bar{u}\|_{\Hcc} \leq CR^{-1/2+\beta}.
\]
\end{theorem}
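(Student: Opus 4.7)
The plan is a standard Galerkin-type perturbation argument, reducing the rate for $\bar{u}_R$ to the approximation-theoretic rate for a best interpolant of $\bar u$ in $\mathcal{H}^0_R$. First I would fix a smooth cutoff $\eta_R\colon \R^2 \to [0,1]$ with $\eta_R \equiv 1$ on $B_{R/2}$, $\mathrm{supp}\,\eta_R \subset B_{3R/4} \subset \Omega_R$, and $|D\eta_R| \lesssim R^{-1}$, and define the trial function
\[
u_R(m) := \eta_R(m)\bigl(\bar u(m) - u_\infty\bigr) \in \mathcal{H}^0_R,
\]
where $u_\infty := \lim_{|m|\to\infty}\bar u(m)$. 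The existence of this limit together with the quantitative decay $|\bar u(m) - u_\infty| \lesssim |m|^{-1/2+\delta}$ follows from Theorem \ref{thm::u} by telescoping $D\bar u$ along a lattice path in $\La\setminus\Gamma_0$ joining $m$ to infinity; such a path can be chosen once $|m|$ is large because $\La\setminus\Gamma_0$ is connected away from the crack tip.

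Next I would estimate $\|\bar u - u_R\|_{\Hcc}$. The discrete product rule gives
\[
D_\rho(\bar u - u_R)(m) = (1-\eta_R(m+\rho))\,D_\rho \bar u(m) - D_\rho\eta_R(m)\,(\bar u(m) - u_\infty),
\]
with both contributions supported in $\{|m|\gtrsim R\}$. Squaring, summing, and using Theorem \ref{thm::u}, the first term contributes $\sum_{|m|>R/2}|D\bar u(m)|^2 \lesssim R^{-1+2\delta}$, while the commutator contributes $\lesssim R^{-2}\cdot R^2 \cdot R^{-1+2\delta} = R^{-1+2\delta}$ by combining $|D\eta_R|\lesssim R^{-1}$ with the pointwise bound on $\bar u - u_\infty$ and the area of the annular support of $D\eta_R$. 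Choosing $\delta$ smaller than any prescribed $\beta > 0$ yields $\|\bar u - u_R\|_{\Hcc} \lesssim R^{-1/2+\beta}$.

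The third step is a quantitative inverse function theorem at $\bar u$. Since $\mathcal{H}^0_R \subset \Hcc$, the strong stability \eqref{stability} restricts to give $\delta^2\E(\bar u)[v,v] \geq \lambda\|v\|^2_{\Hcc}$ for all $v\in\mathcal{H}^0_R$, and continuity of $\delta^2\E$ (Theorem \ref{Eu-well-def}) implies $\delta^2\E(u_R)$ is coercive on $\mathcal{H}^0_R\times\mathcal{H}^0_R$ with constant $\geq \lambda/2$ for $R$ large. The residual satisfies
\[
\|\delta \E(u_R)\|_{(\mathcal{H}^0_R)^*} = \|\delta \E(u_R) - \delta \E(\bar u)\|_{(\mathcal{H}^0_R)^*} \lesssim \|u_R - \bar u\|_{\Hcc} \lesssim R^{-1/2+\beta},
\]
using $\delta\E(\bar u) = 0$ on $\Hcc$ together with local Lipschitz continuity of $\delta\E$. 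A standard contraction-mapping argument in $\mathcal{H}^0_R$, paralleling the one in \cite{EOS2016,2017-bcscrew}, then produces a stable critical point $\bar u_R \in \mathcal{H}^0_R$ with $\|\bar u_R - u_R\|_{\Hcc} \lesssim R^{-1/2+\beta}$, and the triangle inequality closes the proof.

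The main technical obstacle is the pointwise decay estimate $|\bar u(m) - u_\infty| \lesssim |m|^{-1/2+\delta}$ in the crack geometry, which requires a careful choice of summation path that avoids $\Gamma_0$; once this is in hand, the argument is a routine adaptation of the perturbation framework developed in \cite{EOS2016,2017-bcscrew}.
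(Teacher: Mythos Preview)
Your proposal is correct and follows essentially the same Galerkin/inverse-function-theorem framework that the paper invokes from \cite{EOS2016pp}: construct a truncation $u_R$ of $\bar u$ in $\mathcal{H}^0_R$, bound the approximation error using the decay of $D\bar u$ from Theorem~\ref{thm::u}, and close with a quantitative stability argument. The only noteworthy difference is in the commutator estimate: the paper's route bounds $\|\bar u - \text{const}\|$ on the truncation annulus via a discrete Poincar\'e inequality adapted to the crack geometry (built on the interpolation operator from the proof of Theorem~\ref{Eu-well-def}), whereas you obtain the pointwise bound $|\bar u(m)-u_\infty|\lesssim |m|^{-1/2+\delta}$ directly by telescoping $D\bar u$ along a lattice path that skirts $\Gamma_0$. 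Both achieve the same control and yield the same rate; your version is slightly more elementary but leans more heavily on Theorem~\ref{thm::u}, while the paper's Poincar\'e route is more general-purpose. One cosmetic point: your trial function $u_R=\eta_R(\bar u - u_\infty)$ need not satisfy $u_R(\hat x)=0$, so strictly $u_R\notin\Hcc$ as defined in \eqref{Hcc}; this is harmless since only $Du_R$ enters the estimates and $\mathcal{H}^0_R$ does not carry that normalisation, but it is worth a one-line remark.
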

\begin{proof}
The proof of the statement follows almost immediately from the corresponding result in \cite[Theorem 3.8]{EOS2016pp}, as long as we extend the Discrete Poincar\'e inequality described therein to the domain with a crack. This requires a construction of a suitable interpolation operator that correctly takes into account the region between $\Gamma_0$ and $\Gamma_+ \cup \Gamma_-$. This construction shall be carried out as part of the proof of Theorem \ref{Eu-well-def}.
\end{proof}
\subsection{Numerical results}\label{numerics}
In this section we present results of numerical tests that confirm the rate of decay of $|D\bar{u}|$ established in Theorem \ref{thm::u} and the convergence rate from Theorem \ref{thm::conv}.  The setup precisely follows the one described in \cite[Section 3]{2017-bcscrew}, with $\La$ and $\Rc$ already specified and the pair-potential employed given by
\[
\phi(r) = \frac{1}{6}(1 - \exp(-3r^2)).
\]
Theorem \ref{thm::u} suggests that  $\lvert D \bar{u}(x) \rvert \lesssim \lvert x \rvert^{-3/2}$, while Theorem \ref{thm::conv} suggests that in the supercell approximation \eqref{minprob_approx} we expect $\|\bar{u}_R - \bar{u}\|_{\Hcc}\sim \mathcal{O}(R^{-1/2})$, where $R$ is the size of the domain. To compute equilibria we employ a standard Newton scheme, terminating at an $\ell^{\infty}$-residual of $10^{-8}$.

In Figure \ref{fig::numerics} we plot the decay of $|D\bar{u}|$ rescaled by the value of $\epsilon$ used, as well as the convergence rate to the thermodynamic limit, confirming the predictions of Theorems \ref{thm::u} and~\ref{thm::conv}. 
\begin{remark}
We also carried out a similar set of tests for the anti-plane crack problem on a triangular lattice, obtaining qualitatively equivalent results. This indicates that the current restriction to the square lattice has purely technical origins and that it is possible to extend our results to other Bravais lattices. 
\end{remark}
\begin{figure}[!htb]
  \begin{subfigure}[t]{.48\textwidth}
    \centering
    \includegraphics[width=\linewidth]{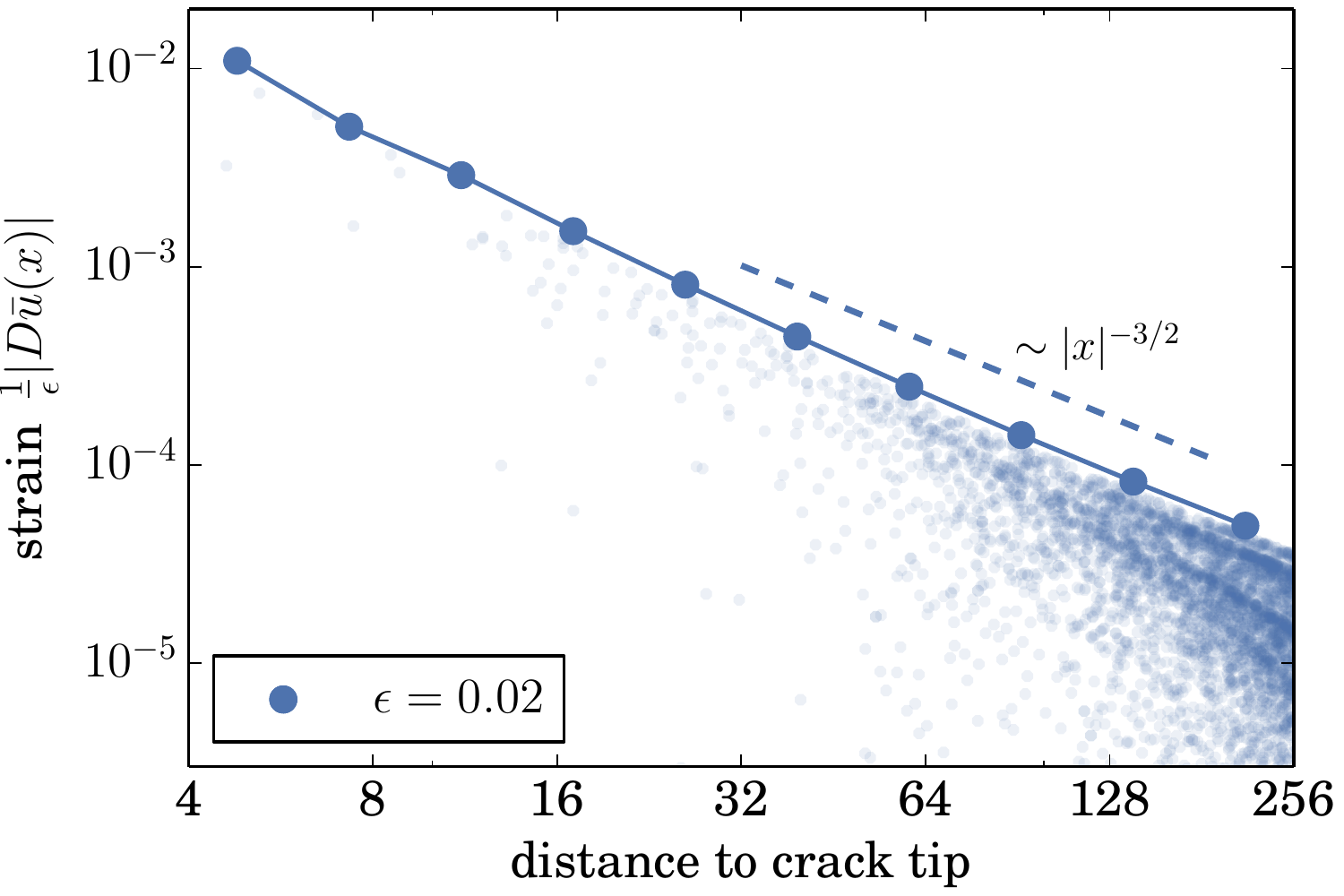}    
  \end{subfigure}
  \hfill
  \begin{subfigure}[t]{.48\textwidth}
    \centering
    \includegraphics[width=\linewidth]{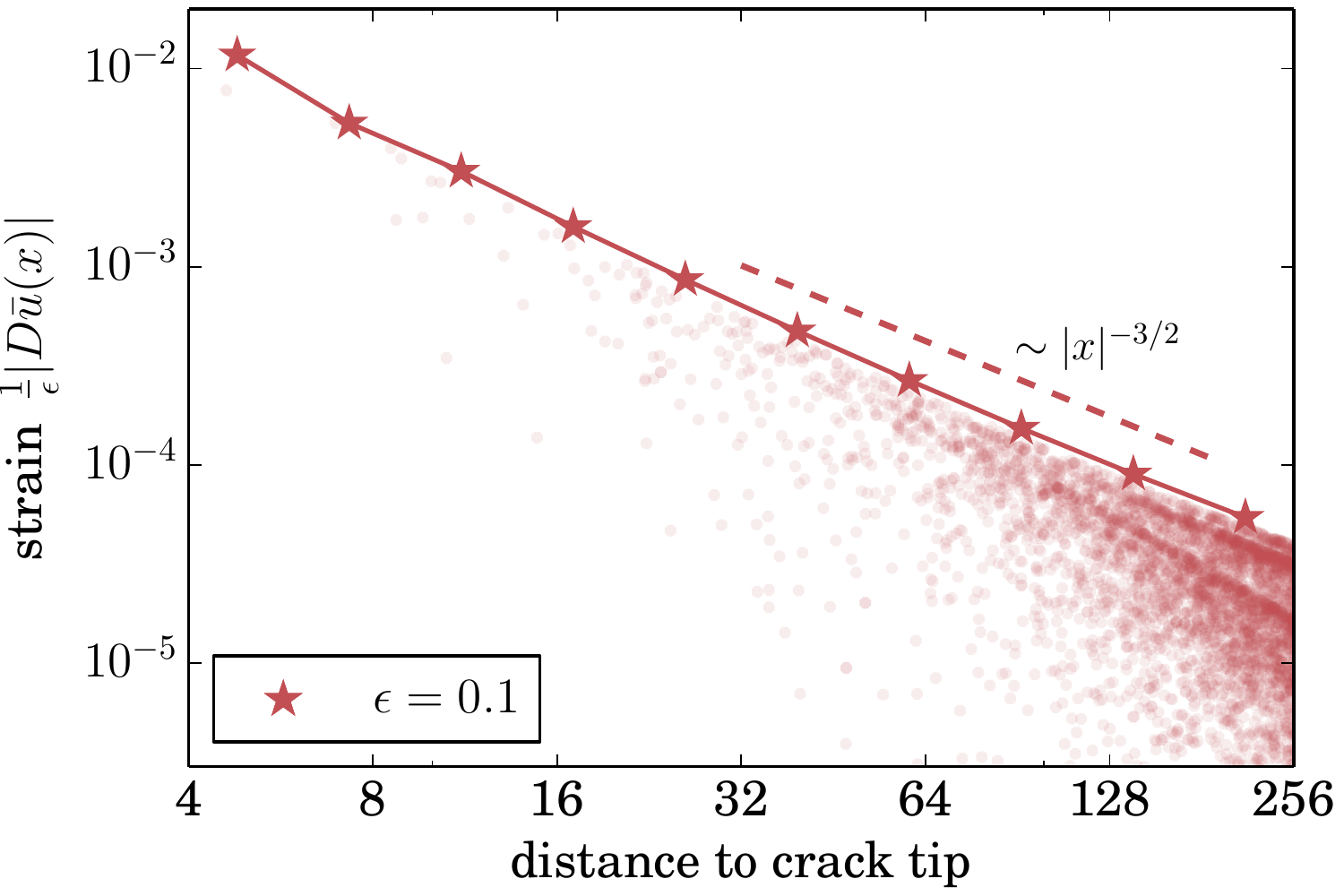}    
  \end{subfigure}
  \medskip
  \begin{subfigure}[t]{.48\textwidth}
    \centering
    \includegraphics[width=\linewidth]{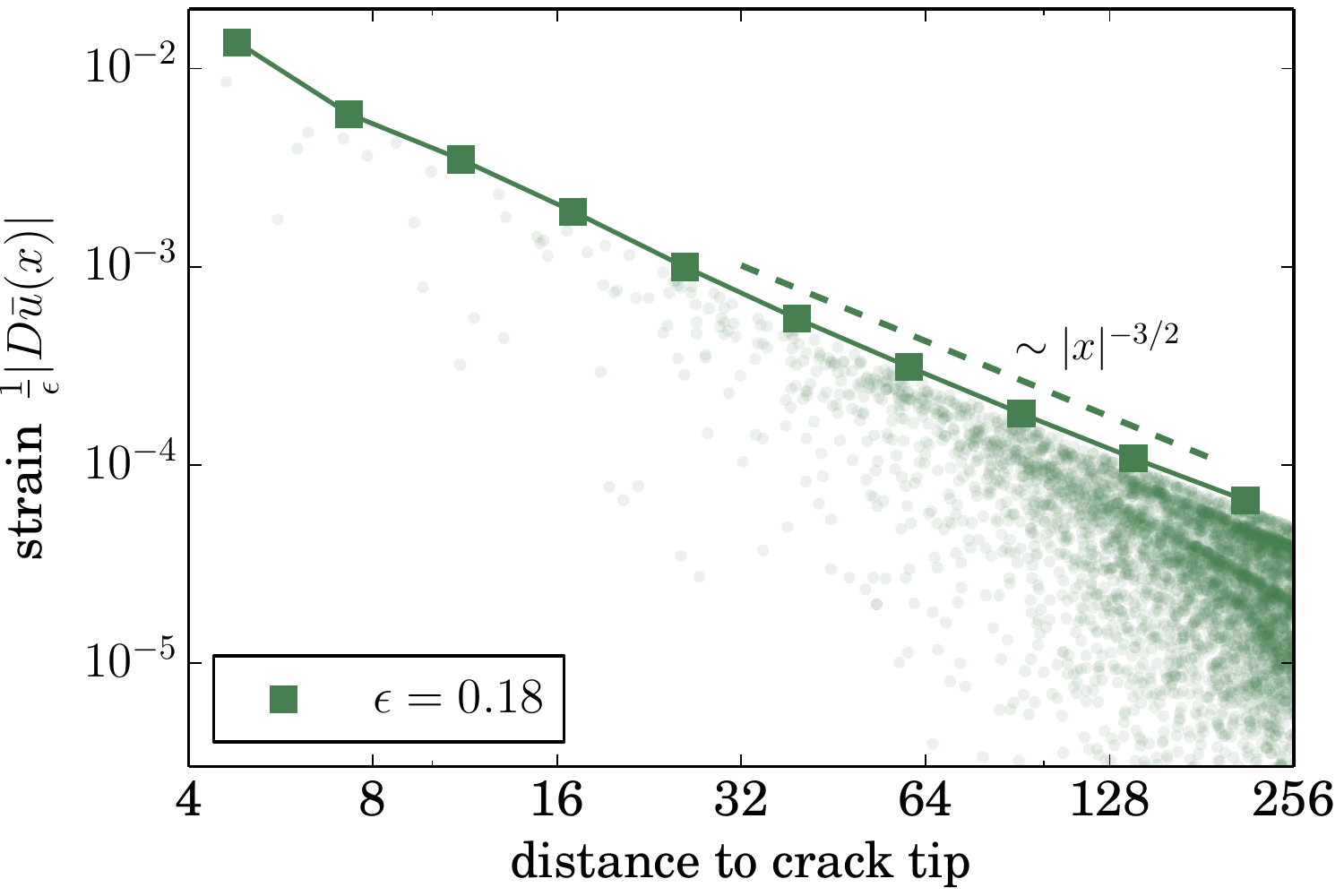}    
  \end{subfigure}
  \hfill
  \begin{subfigure}[t]{.48\textwidth}
    \centering
    \includegraphics[width=\linewidth]{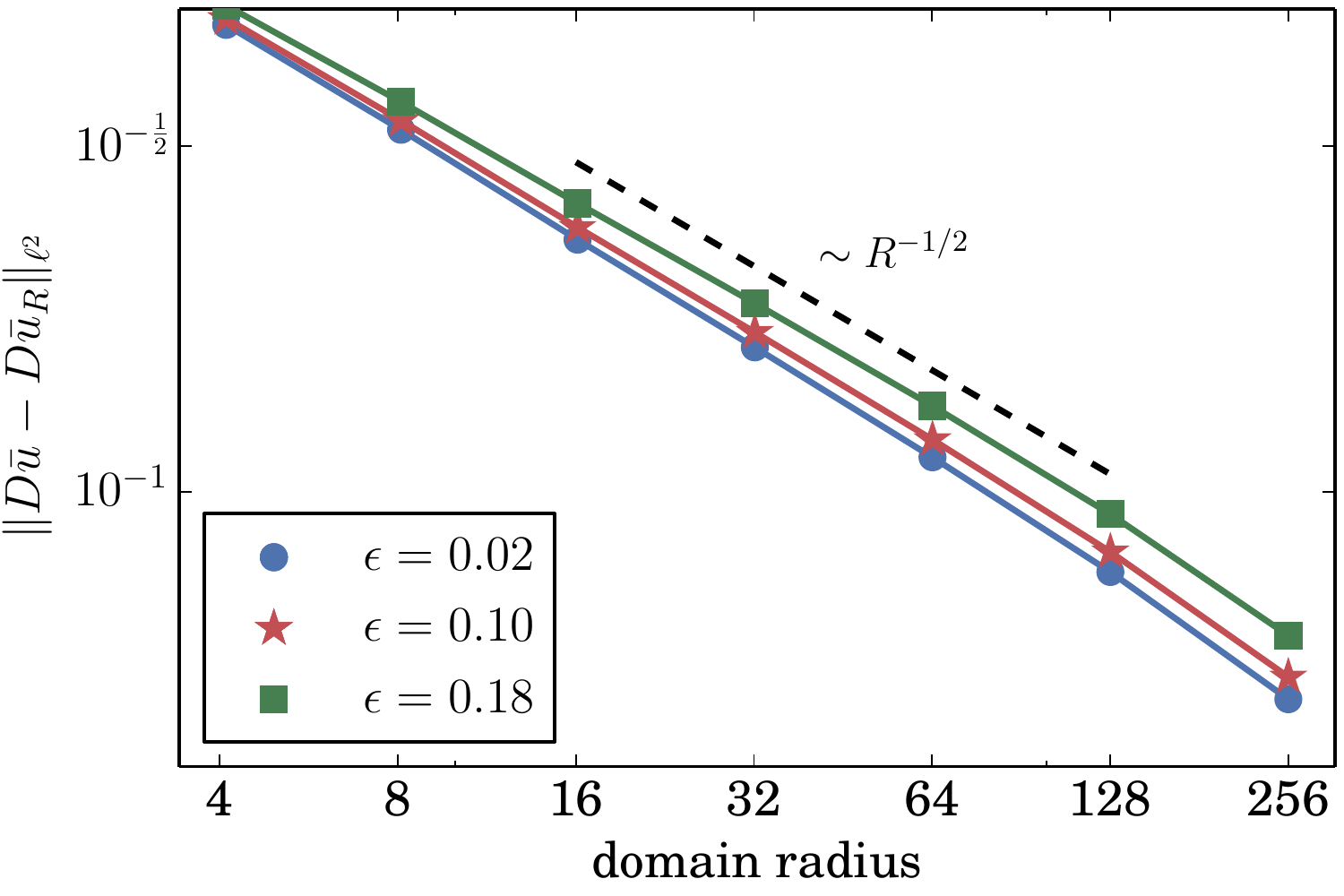}    
  \end{subfigure}
  \captionsetup{width=\linewidth}
  \caption{The decay of the corrector rescaled by the loading parameter, i.e. $\frac{1}{\epsilon}|D\bar{u}|$, for different values of $\epsilon$. Transparent dots denote data points $(|x|, |Du(x)|)$, solid curves their envelopes. We observe the expected rate of $|x|^{-3/2}$ and the linear scaling of $D\bar{u}$ is evident. Bottom right: The rate of convergence of the corresponding supercell approximation. The expected rate $R^{-1/2}$ is observed. }\label{fig::numerics}
\end{figure}
\section{Conclusion and discussion}\label{conclusion}
We have extended the mathematical theory of atomistic modelling of crystalline defects studied in \cite{EOS2016,2013-disl,2017-bcscrew}  to the case of an anti-plane crack defect under nearest-neighbour interactions on a square lattice. This work can be regarded as a first step towards an extension to general atomistic models of fracture, including vectorial models on an arbitrary lattice under an arbitrary interatomic potential.\\
In this paper we have laid out many of the steps needed to achieve this, and in what follows we discuss some of the key technical difficulties which must be overcome to extend the present work.\\

\paragraph{Anti-plane models on an arbitrary Bravais lattice under many-body finite interactions potential:}
The missing ingredient needed to extend the results to anti-plane models beyond NN interactions on a square lattice is the ability to estimate $\bar{\G}$. In our arguments, we rely on a construction of a locally isomorphic mapping from the defective lattice to a homogeneous lattice, which preserves the fact that $\bar{\G}$ is a critical point of the associated energy-difference functional (see Section \ref{P-G-setup}). A similar construction based on a different reflection can also be carried out for the triangular lattice under NN interactions, but this approach is ill-suited to arbitrary finite interactions. This is because as we enlarge the radius of interaction, we increase the number of constraints required for the extended version of $\bar{\G}$ to remain a critical point of the corresponding extended functional, whereas any argument based on reflection argument (possibly coupled with translation and scaling) has a fixed number of degrees of freedom associated with it. For the same reason the current framework only permits many-body terms in the interatomic potential that do not contribute to the Hessian (which is why we restrict ourselves to pair-potentials).\\

\paragraph{More general static crack models:}
Already in the simplified anti-plane setup, the key limiting consequence of the the slow decay of the predictor $\hat{u}$ can be seen by looking at {\cb 
\[
\<{\E(\bar{u})}{v} = \sum_{m \in \La} \sum_{\rho \in \Rc(m)} \phi'(D_{\rho}\hat{u}(m)+D_{\rho}\bar{u}(m))D_{\rho}v(m)
\]
}and Taylor-expanding $\phi'$ around $0$. Crucially, without {\cb the further assumption of mirror symmetry, which in the case of a square lattice, as discussed in \cite[Section 2.2.]{2017-bcscrew}, fails if {\em both} $\phi(-r)\neq\phi(r)$ and the interaction range is such that each nearest-neighbour finite difference is accounted for only in one direction, that is $\tilde{\Rc} := \{e_1,e_2\}$ (with suitable adjustments in the resulting $\tilde{\Rc}(m)$ for atoms at the crack surface $\Gamma$)}, the slow decay rate of $\hat{u}$ implies that 
\begin{equation}\label{term-cancelllation}
\left( v\mapsto \frac{\phi'''(0)}{2}\sum_{m\in\La}\sum_{\rho \in {\cb \tilde{\Rc}(m)}}(D_{\rho}\hat{u}(m))^2 D_{\rho}v(m)\right)\not\in (\Hcc)^*,
\end{equation}
{\cb unless $\phi'''(0) = 0$, which is usually not true when considering the 2D lattice as a projection of an associated 3D lattice. Note that $\phi(-r)\neq\phi(r)$ is not in itself enough for the mirror symmetry to fail, because for $\Rc(m)$ defined in \eqref{Rcm}, it holds that
\[
m \in \La\,\text{ and }\,\rho \in \Rc(m) \implies m+\rho \in \La\,\text{ and }\,-\rho \in \Rc(m+\rho),
\]
which renders \eqref{term-cancelllation} (with inner sum over $\Rc(m)$ and not $\tilde{\Rc}(m)$)  null even if $\phi'''(0) \neq 0$, since the contribution of $(m,m+\rho)$ is cancelled by the contribution of $(m+\rho,m)$.} 

To extend the theory beyond models with mirror symmetry one has to follow the idea of {\it development of solutions} introduced in \cite{2017-bcscrew}, which consists in prescribing a predictor of the form $\hat{u} + \hat{u}_2$, with the additional term arising from higher-order PDE theory (related to nonlinear elasticity). This ensures that
\[
v \mapsto \sum_{m\in\La}\phi''(0)D\hat{u}_2(m)\cdot Dv(m)
\]
up to leading order cancels with \eqref{term-cancelllation}. The role of $\hat{u}_2$ is especially important for vectorial models, since the concept of mirror symmetry does not translate to models that allow for in-plane displacements, meaning that the vectorial equivalent of \eqref{term-cancelllation} never automatically vanishes.\\

\paragraph{In-plane static crack models:}
A further complication related to vectorial models is that as soon as we look beyond nearest-neighbours interactions, we begin to observe surface effects, as for instance investigated in \cite{theilsurface11}. These effects, induced by the crack surface, do not enter the analysis of vectorial models for dislocations and point defects in \cite{EOS2016} and thus pose a major new challenge, as they can potentially lead to surface atoms assuming a notably different structure compared to the bulk which renders the approximation of CLE invalid. Likewise, it may have an impact on the  corresponding lattice Green's function and can potentially make obtaining its decay estimates much more involved. \\

\paragraph{The role of loading parameter $\epsilon$:}
It is the appearance of $\epsilon$ that ensures we can prove existence of strongly-stable solutions to the problem in \eqref{minprob}, as it allows us to employ the Implicit Function Theorem. This is in contrast with dislocation problems, where, except for specific cases with stringent assumptions as e.g. in \cite{2013-disl}, we simply assume that a solution exists. The use of IFT also points to a potential bifurcation occurring for some critical $\epsilon_{\rm crit}$, which is a further deviation from the known theory, as in CLE the choice of $\epsilon$ is irrelevant.

\section{Proofs}\label{proofs}
\subsection{Preliminaries}
In this section we introduce the remaining notation and concepts to be used throughout that were left out of the introductory section. 

Firstly, we define sets
\[
\Omega_{\Gamma} := \big\{x \in \R^2 \,\big|\,x_1 \leq \textstyle{\frac{1}{2}} \text{ and }  x_2 \in \left(-\frac{1}{2},\frac{1}{2}\right)\big\} \setminus \Gamma_0, \quad\quad\Gamma := \partial\Omega_{\Gamma} \setminus \Gamma_0,
\]
with $\Gamma$ being the line that includes lattice points encompassing $\Gamma_0$ and, similarly, $\Omega_{\Gamma}$ being the space that $\Gamma$ encompasses, except for $\Gamma_0$ itself.

We further would like to comment on the definition of the gradient operator and why we set the contribution of broken bonds to zero. This formulation allows us to sum by parts in a convenient way. For instance, for any $u,v\,\colon\,\Lambda \to \R$ with compact support, we have (cf. \eqref{h-simple}) that
\[
\sum_{m\in\La}\sum_{\rho \in \Rc(m)}D_{\rho}u(m)D_{\rho}v(m) = \sum_{m \in\La}Du(m)\cdot Dv(m) = \sum_{m\in\La}\left(-\divo Du(m)\right)v(m).
\]
In the following it is often of interest to only sum over bonds at the crack surface. To this end, for any $m \in \La$ and $\rho \in \Rc(m)$, we introduce the notation $b(m,\rho):= \{m+t\rho\,|\,t\in[0,1]\}$ and the following short-hand summation notation
\[
\sum_{b(m,\rho)\subset\Gamma} \mathlarger{\mathlarger{\equiv}} \sum_{\substack{m\in\La, \rho\in\Rc(m),\\ b(m,\rho) \subset \Gamma}}
\]
together with an analogous definition for bonds not on the crack surface. Likewise, it is important to distinguish the following sets corresponding to the unit square centered at the origin
\begin{equation}\label{omega0u0}
\Omega_0 := \Omega_{\Gamma}\cap [-\textstyle{\frac{1}{2}},-\textstyle{\frac{1}{2}}]^2, \quad Q_0 := \Omega_0 \cap \Gamma.
\end{equation}
We also introduce a shorthand notation related to the complex square root mapping,
\begin{equation}\label{omegashort}
\omega_x := \omega(x),\quad\omega_{xs}^- := \omega(x) - \omega(s),\quad\omega_{xs}^+ := \omega(x) + \omega(s)
\end{equation}
and quote the following standard result without proof. 
\begin{lemma}\label{gradomega}
For $j \in \N$, the complex square root map $\omega$ defined in \eqref{sqrt-map} satisfies
\[
|\nabla^j\omega(x)| \lesssim |x|^{1/2-j}.
\]
\end{lemma}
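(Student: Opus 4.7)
The plan is to exploit the fact that $\omega$ is, up to the identification of $\R^2$ with $\C$, nothing other than the principal branch of the complex square root. Writing $z = x_1 + ix_2 = r_x e^{i\theta_x}$ with $\theta_x \in (-\pi,\pi)$, the formula \eqref{sqrt-map} reads
\[
\omega_1(x) + i\omega_2(x) = \sqrt{r_x}\, e^{i\theta_x/2} = z^{1/2},
\]
where the right-hand side is the principal branch of $z^{1/2}$, which is holomorphic on $\C\setminus(-\infty,0]$; this latter set is precisely $\R^2\setminus\Gamma_0$, so $f(z):= z^{1/2}$ is holomorphic on the domain of $\omega$.

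Differentiating in $z$ gives $f^{(j)}(z) = c_j z^{1/2-j}$ with $c_j = \prod_{k=0}^{j-1}(\tfrac{1}{2}-k)$, so in particular $|f^{(j)}(z)| = |c_j|\,|z|^{1/2-j} = |c_j|\,|x|^{1/2-j}$. I would then recall the elementary fact that for any function $f$ holomorphic on an open set $U\subset\C$, the Cauchy--Riemann equations give $\partial_{x_1}f = f'$ and $\partial_{x_2}f = i f'$, so that by induction any $j$-th order real partial derivative $\partial^\alpha f$ with $|\alpha|=j$ satisfies $\partial^\alpha f = i^{\alpha_2} f^{(j)}$, and in particular
\[
|\partial^\alpha \Reo f(x)|,\;|\partial^\alpha \Imo f(x)| \;\leq\; |f^{(j)}(z)|.
\]
Applying this with $f(z) = z^{1/2}$ and taking $\Reo$, $\Imo$ to recover $\omega_1$ and $\omega_2$ yields $|\partial^\alpha \omega_k(x)| \leq |c_j|\,|x|^{1/2-j}$ for $k\in\{1,2\}$, which summed (or maximised) over $|\alpha|=j$ gives $|\nabla^j \omega(x)| \lesssim |x|^{1/2-j}$, as required.

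There is essentially no obstacle here: the branch cut of the principal square root coincides exactly with the crack $\Gamma_0$ on whose complement $\omega$ is defined, so holomorphy holds on the entire domain of interest, and the estimate is sharp because $|f'(z)|=\tfrac12|z|^{-1/2}$ realises the bound up to a constant. The only minor bookkeeping point is that $\nabla^j \omega$ is a tensor; one interprets $|\nabla^j\omega(x)|$ as any fixed norm on the space of $j$-linear forms on $\R^2$ with values in $\R^2$, and the equivalence of norms on this finite-dimensional space absorbs all combinatorial constants into the implicit constant in $\lesssim$.
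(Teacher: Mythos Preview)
Your proof is correct. The paper itself states this lemma as a standard result and explicitly omits the proof, so your argument via holomorphy of the principal square root and the Cauchy--Riemann equations supplies exactly the kind of elementary verification the paper leaves to the reader.
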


\subsection{Proofs for static anti-plane crack model}\label{P-c}
\subsubsection{Proof of Theorem \ref{Eu-well-def}}\label{P-u}
We separate the proof into two parts, with one devoted to $\E$ defined in \eqref{energy} and the other to the solution to \eqref{minprob}.\\
\paragraph{The energy difference functional $\E$ is well-defined and differentiable:}
For any $v\,\colon\,\La\to\R$ with compact support we can rewrite the energy difference functional $\E$ as
\begin{equation}\label{Eu-rewritten}
\E(v) = \E_0(v) + \<{\delta \E(0)}{v},	
\end{equation}
where
\begin{align*}
\E_0(v) := \sum_{m \in \La}\sum_{\rho \in \Rc(m)}\Big(\phi(D_{\rho}\hat{u}(m) + D_{\rho}v(m)) - \phi(D_{\rho}\hat{u}(m)) - \phi'(D_{\rho}\hat{u}(m))D_{\rho}v(m)\Big)
\end{align*}
and
\[
\<{\delta \E(0)}{v} = \sum_{m \in \La}\sum_{\rho \in \Rc(m))}\phi'(D_{\rho}\hat{u}(m))D_{\rho}v(m).
\]
Since $\phi \in C^k(\R)$ for $k\geq 5$, a simple Taylor expansion argument ensures that $\E_0$ is well-defined on $\Hcc$ (cf. \cite{EOS2016}). Thus the proof relies on showing that $\delta\E(0)$ is a bounded linear functional on $\Hcc$, as then \eqref{Eu-rewritten} holds for any $v\in\Hcc$. Noting that $\phi'(0)= \phi'''(0) = 0$ and $\phi''(0) = 1$, we Taylor-expand $\phi'$ around zero to get
\begin{equation}\label{E-Taylor-exp}
|\<{\delta \E(0)}{v}| \lesssim \left|\sum_{m\in\La}D\hat{u}(m)\cdot Dv(m)\right| + \left|\sum_{m \in \La}R_\phi(m)\cdot Dv(m)\right|,
\end{equation}
where $R_{\phi}$ represents the remaining terms in the Taylor expansion and due to the fact $|D\hat{u}(m)| \lesssim |m|^{-1/2}$, it is immediate that 
\begin{equation}\label{rem-phi}
|R_{\phi}(m)| \lesssim |m|^{-3/2}
\end{equation}
and hence
\[
\left|\sum_{m \in \La}R_\phi(m)\cdot Dv(m)\right|\lesssim \|Dv\|_{\ell^2}.
\]
It remains to estimate the first term of the right-hand side of \eqref{E-Taylor-exp}. To this end we shall exploit the fact that $\hat{u}$ solves the equation given by \eqref{eqn-u-hat}, in particular after constructing a suitable interpolation operator that takes any lattice function to the continuum space. Firstly we tessellate the domain $\R^2\setminus\Gamma_0$ as follows. We carve the squares in the lattice into two right-angle triangles and introduce a (P1) piecewise linear interpolation operator $I$ over the resulting triangulation (see the left of Figure \ref{fig:lattice_ext}).

In order to exploit the boundary condition that $\hat{u}$ satisfies, we also want $Iv$ to be well-defined on $\Omega_{\Gamma}$ and continuous across $\Gamma$. Away from the defect core this is possible by extending it so that it aligns with the the values of $Iv(x)$ for $x \in \Gamma$ and is constant in the normal direction, as shown in the centre of Figure \ref{fig:lattice_ext}. Additionally, near the origin we create two new interpolation points as shown on the right of Figure \ref{fig:lattice_ext}, one at the origin and one in-between points $a$ and $d$ and we denote it by $\widehat{ad}$. We define the interpolation there as $Iu(0) := \frac{1}{4}\left(u(a) + u(b) + u(c) + u(d)\right)$ and $\lim_{x\downarrow \widehat{ad}}u(x) = u(d)$ whereas $\lim_{x\uparrow\widehat{ad}}u(x)=u(a)$, emphasising the fact that the resulting P1 interpolant does not need to be continuous across $\Gamma_0$, but is continuous across the triangle $T_3$. 
\begin{figure}[!htbp]
\centering
\begin{minipage}{0.40\textwidth}
\centering
\includegraphics[width=0.98\linewidth]{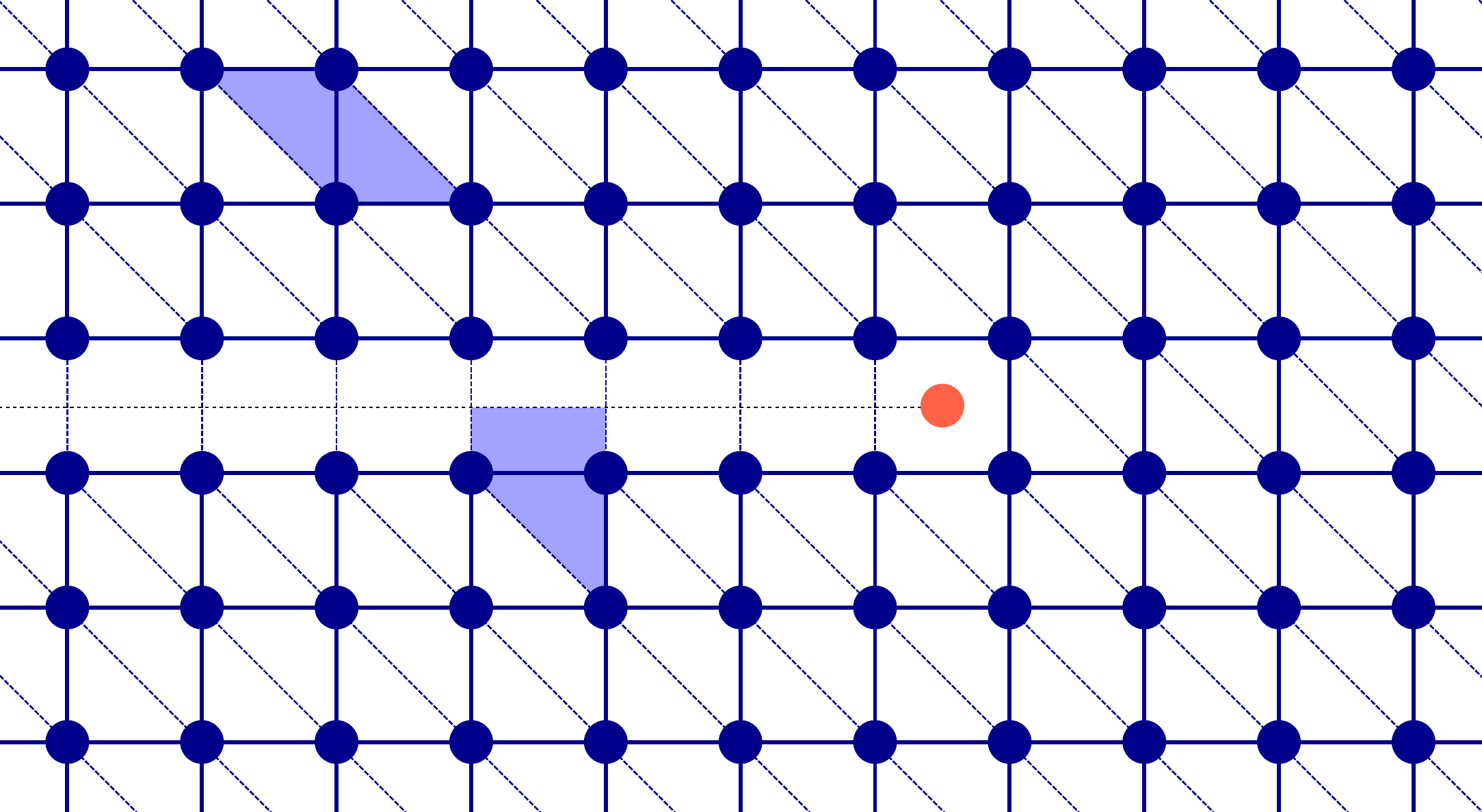}
\end{minipage}
\begin{minipage}{0.33\textwidth}
\centering
\includegraphics[width=0.98\linewidth]{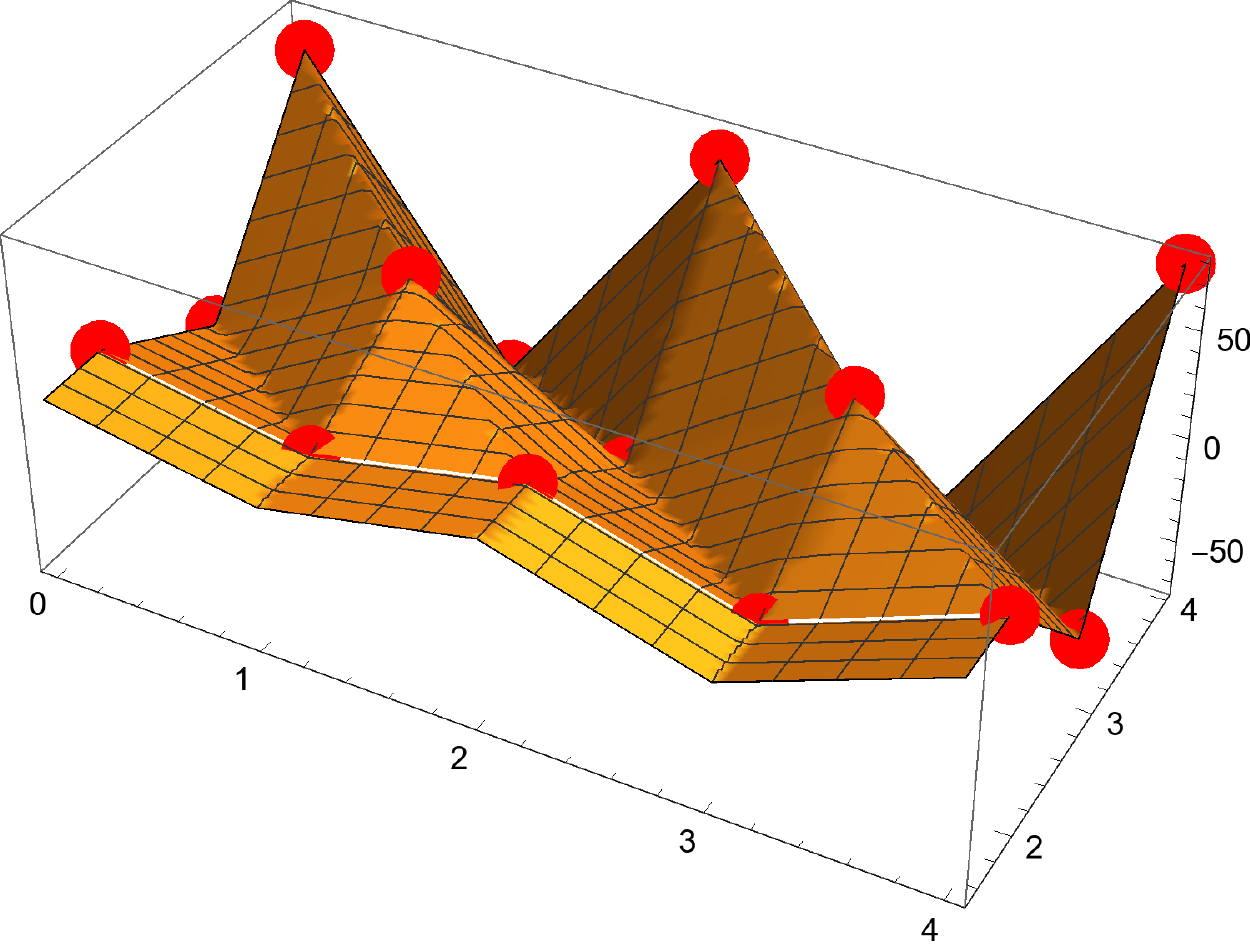}
\end{minipage}
\begin{minipage}{0.25\textwidth}
 \centering
 \raggedright
\includegraphics[width=0.95\linewidth]{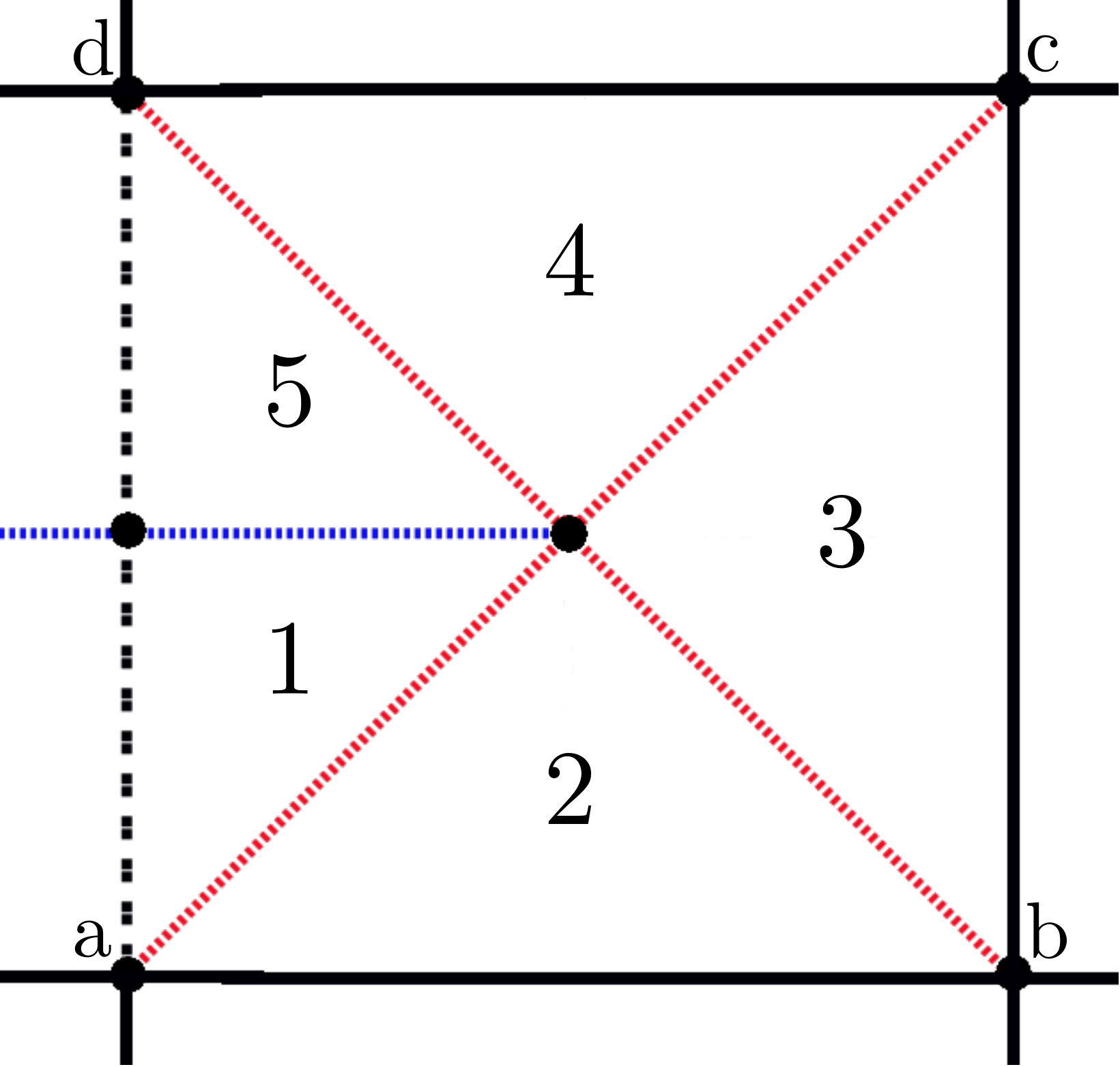}
\end{minipage}
\captionsetup{width=\linewidth}
\caption[caption]{Left: The tessellation of the domain $\R^2\setminus\Gamma_0$, with triangles away from the crack and rectangles at the crack surface. In blue a typical region of integration associated with a bond.\\
Middle: For some lattice function $u\,\colon\,\La \to \R$ each red dot represents the point in the three dimensional space corresponding to $(l_1,l_2,u(l))$ for some lattice point $l \in \La$. The orange region represents the graph of the corresponding interpolant $Iu$, in particular clearly illustrating its extension to $\Omega_{\Gamma}\setminus\Gamma_0$ (here looking from above).\\
Right: Near the origin we create two additional interpolation points, one at the origin and one half-way between lattice points on the crack surface closest to the origin and impose a triangulation as shown. The resulting P1 interpolation introduces a collection triangles $\{T_1,\dots,T_5\}$  and we stress that $Iu$ is not continuous across the common edge of $T_1$ and $T_5$.}\label{fig:lattice_ext}
\end{figure}

We can thus write
\begin{align*}
0 &=  C_{\La}\int_{\R^2\setminus\Gamma_0} (-\Delta \hat{u}(x))Iv(x)\,dx = C_{\La}\int_{\R^2\setminus\Gamma_0} \nabla \hat{u}(x) \cdot \nabla Iv(x)\,dx,
\end{align*}
where in particular the second equality follows from integration by parts and the boundary term is not there due to the boundary condition. Hence we in fact aim to estimate
\[
\sum_{m\in\La}\<{D\hat{u}(m)}{Dv(m)}  - C_{\La}\int_{\R^2\setminus(\Omega_{\Gamma}\cup\Gamma_0)} \nabla \hat{u}(x)\cdot\nabla Iv(x) - C_{\La}\int_{\Omega_{\Gamma}} \nabla \hat{u}(x)\cdot\nabla Iv(x).
\]
\begin{remark}\label{Clambda}
The constant $C_{\La}$ depends on the lattice under consideration. In the case of the square lattice, $C_{\La} = 2$, but for in instance if we were to consider the triangular lattice with NN interactions, the constant would be $2\sqrt{3}$. The freedom of choice is a consequence of the fact that $\hat{u}$ satisfies Laplace equation with zero Neumann boundary condition. It also justifies why $\hat{u}$ is a valid predictor for any choice of constant the $\epsilon$ in \eqref{upred}. This is in contrast with the subsequent Green's function argument in Section \ref{P-G-setup} where we have to prescribe the correct constant in the equation for the corresponding predictor. 
\end{remark}

The triangulation of $\R^{2}\setminus\Omega_{\Gamma}$ induced by the P1 interpolation introduces a collection of triangles $\mathcal{T}$. Inside any given $T \in \mathcal{T}$ both components of $\nabla Iv$ are constant and each corresponds to $D_{\rho}v(l)$ for some bond $b(l,\rho)$ being an edge of $T$. As a result we can write 
\[
C_{\La}\int_{\R^2\setminus(\Omega_{\Gamma}\cup\Gamma_0)}\nabla \hat{u}(x)\cdot \nabla Iv(x) = \sum_{m \in \La}\sum_{\rho\in\Rc(m)}\left(\int_{U_{m\rho}} \nabla_{\rho}\hat{u}(x)\,dx\right)D_{\rho}v(m),
\]
where $U_{m\rho}$ is the union of triangles for which a given bond $b(m,\rho)$ is an edge (cf. Figure \ref{fig:lattice_ext}). The constant $C_{\La} = 2$ disappears due to the fact that the set of lattice directions under consideration counts each bond twice. 

A similar analysis is applicable to the integral over $\Omega_{\Gamma}$. Away from $\Omega_0$ (the unit square centred at the origin  defined in \eqref{omega0u0}), it can be tessellated into a collection of rectangles $\big(Q_{m\rho}\big)$, each associated with one lattice bond $b(m,\rho)\subset\Gamma\setminus Q_0$ (cf. Figure \ref{fig:lattice_ext}), where we recall $Q_0 = \Omega_0\cap\Gamma$. Due to how we construct the interpolant of $v$, we can thus conclude that
\[
C_{\La}\int_{\Omega_{\Gamma}\setminus(\Omega_0)} \nabla \hat{u}(x) \cdot \nabla Iv(x)\,dx = \sum_{b(m,\rho)\subset(\Gamma\setminus Q_0)}\left(\int_{Q_{m\rho}}\nabla_{\rho}\hat{u}(x)\,dx\right)D_{\rho}v(m).
\] 
It can also be readily checked that (using the notation from Figure \ref{fig:lattice_ext}) {\cb 
\[
\int_{\Omega_0}\nabla\hat{u}(x,s)\cdot\nabla Iv(x)dx = C_{b,a}(v(b)-v(a)) + C_{c,b}(v(c) - v(b)) + C_{d,c}(v(d)-v(c)),
\]
where the coefficients are given by
\begin{align}\label{Q0interpol1}
C_{b,a} &:= \frac{1}{2}\Big(3\int_{T_1}\nabla_{e_1}\hat{u} + 2\int_{T_2}\nabla_{e_1}\hat{u} + \int_{T_2}\nabla_{e_2}\hat{u} 
+\int_{T_3}\nabla_{e_1}\hat{u}+\int_{T_4}\nabla_{e_2}\hat{u}-\int_{T_5}\nabla_{e_1}\hat{u} \Big),\\
C_{c,b} &:=\Big(\int_{T_1}\nabla_{e_1}\hat{u} + \int_{T_2}\nabla_{e_2}\hat{u}+\int_{T_3}\nabla_{e_2}\hat{u}+\int_{T_4}\nabla_{e_2}\hat{u}-\int_{T_5}\nabla_{e_1}\hat{u}\Big),\\
C_{d,c} &:= \frac{1}{2}\Big(\int_{T_1}\nabla_{e_1}\hat{u}+\int_{T_2}\nabla_{e_2}\hat{u}-\int_{T_3}\nabla_{e_1}\hat{u}-2\int_{T_4}\nabla_{e_1}\hat{u} + \int_{T_4}\nabla_{e_2}\hat{u}-3\int_{T_5}\nabla_{e_1}\hat{u} \Big).\label{Q0interpol2}
\end{align}
We note that the directions with respect to which finite differences are taken can be reversed, thus we also define
\begin{equation}\label{Q0interpol3}
C_{a,b} := -C_{b,a},\,C_{b,c} = -C_{c,b}\,\text{ and }\,C_{c,d} := -C_{d,c}.
\end{equation}}
We can therefore write
\begin{align*}
{\cb \<{\delta \E(0)}{v} } &= \sum_{b(m,\rho)\not\subset\Gamma}\left(D_{\rho}\hat{u}(m) - \int_{U_{m\rho}}\nabla_{\rho}\hat{u}(x)\,dx\right)D_{\rho}v(m)\\ 
&+ \sum_{b(m,\rho)\subset({\cb \Gamma\setminus Q_0)}}\left(D_{\rho}\hat{u}(m) - \int_{U_{m\rho}}\nabla_{\rho}\hat{u}(x)\,dx - \int_{Q_{m\rho}}\nabla_{\rho}\hat{u}(x)\,dx\right)D_{\rho}v(m)\\
&+\sum_{b(m,\rho) \subset Q_0}{\cb \left(D_{\rho}\hat{u}(m) - C_{m+\rho,m}\right)}D_{\rho}v(m),
\end{align*}
{\cb where the coefficients $C_{m+\rho,m}$ are given by \eqref{Q0interpol1}-\eqref{Q0interpol3}. Since
\[
\int_{B_1(0)\setminus\Gamma_0}|\nabla\hat{u}(x)|dx \lesssim \int_{0}^1 r^{1/2}dx\,{\cb < \infty},
\] 
it is clear that for any $b(m,\rho) \subset Q_0$, $\left(D_{\rho}\hat{u}(m) - C_{m+\rho,m}\right)$ can be bounded uniformly. }

Bearing in mind that $D_{\rho}\hat{u}(m) = \int_0^1 \nabla_{\rho}\hat{u}(m+t\rho)\,dt$ and observing that for $b(m,\rho)\not\subset\Gamma$, we have  $|U_{m\rho}|= 1$, we exploit the fact that both regions of integration share the same mid-point. A Taylor expansion followed by a standard quadrature error estimate thus leads to 
\[
b(m,\rho)\not\subset\Gamma \implies \left|D_{\rho}\hat{u}(m) - \int_{U_{m\rho}}\nabla_{\rho}\hat{u}(x)\,dx\right| \lesssim |\nabla^3\hat{u}(m)|.
\]

On the other hand, for $b(m,\rho)\subset\Gamma\setminus Q_0$ there is only one triangle and thus $|U_{m\rho}| = \frac{1}{2}$, but we also have $|Q_{m\rho}| = \frac{1}{2}$. While regions of integration no longer share a mid-point, we still Taylor-expand and apply a weaker (first-order) quadrature error estimate to conclude that 
\[
b(m,\rho)\subset\Gamma\setminus Q_0 \implies \left|D_{\rho}\hat{u}(m) - \int_{U_{m\rho}}\nabla_{\rho}\hat{u}(x)\,dx\right| \lesssim |\nabla^2\hat{u}(m)|.
\]
Finally, since Lemma \ref{gradomega} implies that for both $j=2,3$ and  $m \in \La$ with $m \approx 0$ we have $|\nabla^j\hat{u}(m)| \sim \mathcal{O}(1)$ (in particular finite since $|m| > \frac{1}{\sqrt{2}}$) , we can incorporate any bond $b(m,\rho)\subset Q_0$ into the general conclusion that
\begin{equation}\label{dhatudv}
\left|\sum_{m\in\La}D\hat{u}(m)\cdot Dv(m)\right| \lesssim \sum_{b(m,\rho)\not\subset\Gamma}|\nabla^3\hat{u}(m)||D_{\rho}v(m)| + \sum_{b(m,\rho)\subset\Gamma}|\nabla^2\hat{u}(m)||D_{\rho}v(m)|
\end{equation}
and since $|\nabla^3\hat{u}(m)| \lesssim |m|^{-5/2}$ and $|\nabla^2\hat{u}(m)|\lesssim |m|^{-3/2}$, then
\[
\left|\sum_{m\in\La}D\hat{u}(m)\cdot Dv(m)\right| \lesssim \|Dv\|_{\ell^2}.
\]
Thus we can conclude that for any $v \in \Hcc$,
\[
|\<{\delta \E(0)}{v}| \lesssim \|Dv\|_{\ell^2}.
\]
The fact that $\E$ is at least $k$-times continuously differentiable then naturally follows from $\phi \in C^k(\R)$, see \cite{2012-ARMA-cb} for an analogous argument. \\

\paragraph{Existence, local uniqueness, and strong-stability of solutions:} We begin by quoting the Implicit Function Theorem, adapted from \cite{serge}:
\begin{theorem}[Implicit Function Theorem]\label{IFT}
 Let $X,\,Y,\,Z$ be Banach spaces. Let the mapping $F\,:\,X\times Y\to Z$ be continuously Fr\'echet differentiable with respect to both $x$ and $y$. If $(x_0,y_0) \in X \times Y$, $F(x_0,y_0) = 0$ and the mapping $x \mapsto DF(x_0,y_0)(x,0)$ is a Banach space isomorphism from $X$ onto $Z$, then there exist neighbourhoods $U$ of $x_0$ and $V$ of $y_0$ and a Fr\/echet differentiable function $g\, :\, V \to U$ such that $F(g(y), y) = 0$ and $F(x,y) = 0$ if and only if $x = g(y)$, for all $(x,y) \in U \times V$.
\end{theorem}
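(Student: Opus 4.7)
The plan is to reduce the implicit equation $F(x,y)=0$ to a fixed-point problem and invoke the Banach contraction mapping principle. Write $L := DF(x_0,y_0)(\cdot,0) \in \mathcal{L}(X,Z)$ for the partial Fréchet derivative in the first slot at the base point; by hypothesis $L$ is a Banach isomorphism with bounded inverse $L^{-1}:Z\to X$. Define the auxiliary map
\[
T(x,y) := x - L^{-1}F(x,y),
\]
so that, for each fixed $y$, $F(x,y)=0$ holds if and only if $x$ is a fixed point of $T(\cdot,y)$. A direct computation gives the crucial identity $D_x T(x_0,y_0) = I_X - L^{-1} L = 0$.

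First I would establish the contraction estimate. Continuity of $DF$ transfers to continuity of $D_x T$, so there exist $r>0$ and a neighbourhood $V_1 \subset Y$ of $y_0$ such that $\|D_x T(x,y)\|_{\mathrm{op}} \leq \tfrac{1}{2}$ for all $(x,y) \in \bar B_r(x_0)\times V_1$. The Banach-space mean value inequality then yields
\[
\|T(x,y)-T(x',y)\|_X \leq \tfrac{1}{2}\|x-x'\|_X\quad\text{for all } x,x'\in \bar B_r(x_0),\ y\in V_1.
\]
Next, continuity of $y\mapsto F(x_0,y)$ together with $F(x_0,y_0)=0$ allows me to shrink to a smaller neighbourhood $V \subset V_1$ such that $\|L^{-1}F(x_0,y)\|_X \leq r/2$ for all $y\in V$. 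Combining the two bounds, for $x\in \bar B_r(x_0)$ and $y\in V$,
\[
\|T(x,y)-x_0\|_X \leq \|T(x,y)-T(x_0,y)\|_X + \|T(x_0,y)-x_0\|_X \leq \tfrac{r}{2}+\tfrac{r}{2}=r,
\]
so $T(\cdot,y)$ is a strict contraction of $\bar B_r(x_0)$ into itself. Banach's fixed-point theorem produces, for each $y \in V$, a unique $x=:g(y)\in \bar B_r(x_0)$ with $F(g(y),y)=0$; setting $U$ to be the interior of $\bar B_r(x_0)$ (or shrinking as needed) gives the local equivalence $F(x,y)=0 \Leftrightarrow x=g(y)$ on $U\times V$. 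Continuity of $g$ follows from the contraction estimate and continuity of $y\mapsto T(x_0,y)$.

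The remaining task is Fréchet differentiability of $g$. The key auxiliary fact is that the set of isomorphisms is open in $\mathcal{L}(X,Z)$ and inversion is continuous on it; combined with continuity of $(x,y)\mapsto D_x F(x,y)$, this ensures, after shrinking $U\times V$ if necessary, that $D_x F(g(y),y)$ remains invertible with uniformly bounded inverse for $y\in V$. Fix $y\in V$ and let $h:=y'-y$, $k:=g(y')-g(y)$. Expanding the identity $F(g(y'),y')-F(g(y),y)=0$ via the Fréchet increment formula gives
\[
0 = D_x F(g(y),y)\,k + D_y F(g(y),y)\,h + o(\|k\|+\|h\|).
\]
Inverting $D_x F(g(y),y)$ yields the a priori Lipschitz bound $\|k\| \lesssim \|h\|$, which in turn upgrades the remainder to $o(\|h\|)$ and produces the explicit formula
\[
Dg(y) = -[D_x F(g(y),y)]^{-1}\,D_y F(g(y),y).
\]
Continuity of the right-hand side in $y$ gives $g \in C^1(V,U)$.

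The main obstacle is this last differentiability step: extracting the bound $\|k\| \lesssim \|h\|$ from an increment formula that only gives $o(\|k\|+\|h\|)$ is circular unless one first secures the uniform invertibility of $D_x F$ near $(x_0,y_0)$. That uniformity, in turn, rests on the openness of the invertible elements in $\mathcal{L}(X,Z)$ together with continuity of $DF$; once it is in place, all subsequent estimates and the formula for $Dg$ follow by routine manipulation.
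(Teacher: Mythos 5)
Your proposal is a correct and complete proof, but note that the paper does not actually prove this statement: Theorem \ref{IFT} is quoted verbatim from a textbook reference (\cite{serge}) and used as a black box to establish Theorem \ref{Eu-well-def}, so there is no in-paper argument to compare against. What you have written is the standard contraction-mapping proof: reduce $F(x,y)=0$ to the fixed-point problem for $T(x,y)=x-L^{-1}F(x,y)$, use $D_xT(x_0,y_0)=0$ plus continuity of $DF$ to get a uniform contraction constant $\tfrac12$ on $\bar B_r(x_0)\times V_1$, shrink $V$ so that $T(\cdot,y)$ maps the ball into itself, and apply Banach's fixed-point theorem. The differentiability step is also handled correctly: you rightly flag that the bound $\|k\|\lesssim\|h\|$ cannot be read off the increment formula alone, and you resolve it by first securing uniform invertibility of $D_xF(g(y),y)$ via openness of the isomorphisms in $\mathcal{L}(X,Z)$, after which the $o(\|k\|+\|h\|)$ remainder can be absorbed (using the already-established continuity of $g$, so that $\|k\|\to 0$ as $\|h\|\to 0$). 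Two small cosmetic points: the self-mapping estimate only gives $\|g(y)-x_0\|\leq r$, so to land $g$ in an \emph{open} $U$ you should either make the inequality strict or take $U$ a slightly smaller ball, as you parenthetically acknowledge; and the joint increment formula you invoke requires total differentiability of $F$, which does follow from the stated hypothesis that both partial Fr\'echet derivatives exist and are continuous, but deserves a one-line remark. Neither is a genuine gap.
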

In our setting, we have $X = \Hcc, Y = \R$ and $Z = (\Hcc)^*$. We can interpret the energy difference functional $\E$ as defined on $\Hcc \times \R$ and thus $F = \delta_u \E$. We notice that for $\epsilon = 0$ we have a trivial solution $\bar{u}_0 = 0$, thus giving us the pair $(\bar{u}_0,0) \in \Hcc \times \R$. We further observe that 
\[
\<{DF(u_0,0)(v,0)}{w} = \delta_u^2 \E(u_0,0)[v,w] = \sum_{m \in \La}\phi''(0)Dv(m)\cdot Dw(m)
\]
and since $\phi''(0) = 1$, the mapping $DF(u_0,0)(\cdot,0)$ is indeed an isomorphism, as it is in fact the Riesz map from Riesz Representation Theorem for Hilbert spaces (cf. \cite{rudin}).

Hence all the assumptions of the theorem are fulfilled and we can conclude that in a neighbourhood of $(\bar{u},0)$ we have a unique solution path of the form $\{(u(\epsilon),\epsilon)\,|\, \epsilon \in [0,\epsilon_{\rm crit})\}$ with continuous dependence of $u$ on $\epsilon$. The strong-stability \eqref{stability} of solutions for $\epsilon_{crit}$ small enough follows from the fact that it is trivially satisfied for $u_0$ with $\lambda = \phi''(0) = 1$ and the continuous dependence of solutions on $\epsilon$, as we can always write
\[
\delta^2\E(u(\epsilon),\epsilon)[v,v] = \left(\delta^2\E(u(\epsilon),\epsilon) - \delta^2 \E(u_0,0)\right)[v,v] + \delta^2\E(u_0,0)[v,v].
\] 
\subsubsection{Proof of Theorem \ref{thm::u}}\label{P-u2}
Let $\tau \in \Rc(l)$. Using the lattice Green's function for crack geometry from Theorem \ref{thm::G} (to be proven in Section \ref{P-G}), we define a test function $v(m):=\Ds{\tau}\G(m,l)$, which decays like $|D_{\rho}v(m)| \lesssim (1+|\omega_m||\omega_l||\omega^-_{ml}|^{2-\delta})^{-1}$ for any $\delta > 0$. We can thus write that
\begin{align*}
D_{\tau}\bar{u}(l) &= \sum_{m\in\La}\phi''(0)D\bar{u}(m)\cdot Dv(m) \\ 
&= \sum_{m \in \La}\sum_{\rho\in \Rc(m)}\left(\phi'(D_{\rho}\hat{u}(m)) + \phi''(0)D_{\rho}\bar{u}(m) - \phi'(D_{\rho}\hat{u}(m) + D_{\rho}\bar{u}(m))\right)D_{\rho}v(m)\\
 &-\sum_{m \in \La}\sum_{\rho\in \Rc(m)}\phi'(D_{\rho}\hat{u}(m))D_{\rho}v(m)\\
&=: \sum_{m\in\La} A(m)\cdot Dv(m) - B(m)\cdot Dv(m)
\end{align*}
where we exploited the fact that $\bar{u}$ is a critical point, that is it satisfies
\begin{equation}\label{ueqn}
\<{\delta\E(\bar{u})}{v} = \sum_{m\in\La}\sum_{\rho \in \Rc(m)}\phi'(D_{\rho}\hat{u}(m) + D_{\rho}\bar{u}(m))D_{\rho}v(m) = 0\quad \forall v \in \Hcc.
\end{equation}
A Taylor expansion of $\phi'$ around zero yields that
\[
|A(m)| \lesssim |D\hat{u}(m)|^{4} + |D\bar{u}(m)|^{2} \lesssim |\omega_m|^{-4} + |D\bar{u}(m)|^{2},
\]
where we used $|\nabla\hat{u}(m)| \lesssim |m|^{-1/2} = |\omega_m|^{-1}$. Similarly
\[
\left|\sum_{m\in\La} B(m)\cdot Dv(m)\right| \lesssim  \left|\sum_{m\in\La}D\hat{u}(m)\cdot Dv(m)\right| + \left|\sum_{m \in \La}R_\phi(m)\cdot Dv(m)\right|,
\]
where $R_\phi$ as in \eqref{rem-phi}. In light of \eqref{dhatudv} we thus obtain
\[
\left|\sum_{m\in\La} B(m) \cdot Dv(m)\right| \lesssim \sum_{m\in\La}|\omega_m|^{-3}|Dv(m)|,
\]
which, when put together with the decay of $v$ implies that
\begin{equation}\label{dbaru1}
|D_{\tau}\bar{u}(l)| \lesssim \sum_{m\in\La}|\omega_m|^{-3}(1+|\omega_m||\omega_l||\omega^-_{ml}|^{2-\delta})^{-1} + \sum_{m\in\La}|D\bar{u}(m)|^2(1+|\omega_m||\omega_l||\omega^-_{ml}|^{2-\delta})^{-1}.
\end{equation}
The first term on the right-hand side of \eqref{dbaru1} can be estimated as follows. We define
\[
\sum_{m \in \La}f(m) := \sum_{m \in \La}(1+|\omega_m|^{3})^{-1}(1+|\omega_m||\omega_l||\omega^-_{ml}|^{2-\delta})^{-1} 
\]
and observe that away from the sharp spikes at $m=l$ and $m= 0$ we can bound this series by the corresponding integral, that is we can say
\[
\sum_{m \in \La}f(m)  \lesssim f(l) + f(0) + \int_{D}f dm,
\]
where $D:=(\R^2\setminus\Gamma_0)\setminus(B_1(l)\cup B_1(0))$. Firstly we note that
\[
f(l) = (1+|\omega_l|^{3})^{-1}\quad\text{and}\quad f(0) = (1+|\omega_l|^{3-\delta})^{-1}.
\]
For the integral term we introduce a change of variables $\xi = \omega_m$, which leads to $\zeta:= \omega_l$, and $dm = |\xi|^2d\xi$. As a result, we have
\[
\int_{D}f(x)dx = \int_{\omega(D)}\frac{|\xi|^{2}}{(1+|\xi|^{3})(1+|\xi||\zeta|{\cb |\xi-\zeta|})^{2-\delta}} d\xi \lesssim \int_{\omega(D)}|\zeta|^{-1}|\xi|^{-2}|\xi-\zeta|^{-2+\delta}d\xi =: \int_{\omega(D)}\tilde{f}d\xi.
\]
Bearing in mind that 
\[
\omega(D) = \R^2_+\setminus (B_1(0) \cup \omega(B_1(l))),
\]
we carve the region of integration into 
\[
\Omega_0 := B_{\frac{|\zeta|}{2}}(0)\cap \omega(D),\quad\Omega_{\zeta} := B_{\frac{|\zeta|}{2}}(\zeta)\cap \omega(D)\quad\text{and}\quad \Omega' := \omega(D) \setminus (\Omega_{0} \cup \Omega_{\zeta})
\]
and estimate the integral over each region separately as follows:
\[
\int_{\Omega_0}\tilde{f}d\xi \lesssim |\zeta|^{-3+\delta} \int_1^{\frac{|\zeta|}{2}}r^{-1}dr \lesssim |\zeta|^{-3+\delta}\log|\zeta|,
\]
\begin{equation}\label{fomegazeta1}
\int_{\Omega_{\zeta}}\tilde{f}d\xi \lesssim |\zeta|^{-3} \int_{{\cb \frac{1}{|\zeta|}}}^{\frac{|\zeta|}{2}}r^{-1+\delta}dr \lesssim |\zeta|^{-3+\delta} {\cb + |\zeta|^{-3-\delta} \lesssim |\zeta|^{-3+\delta}} ,
\end{equation}
where{\cb , in the second integral, $\frac{1}{|\zeta|}$ appears due to the exclusion of $B_1(l)$ from $D$ which, due to \eqref{csqrt-id}, translates in the domain of integration to
\[
\xi \in\Omega_{\zeta} \implies |\xi-\zeta| \geq \frac{1}{|\xi+\zeta|} \geq \frac{1}{|\xi|+|\zeta|} \gtrsim \frac{1}{|\zeta|},
\]
where the first inequality follows from $1 \leq |m-l| = |\xi-\zeta||\xi+\zeta|$, as in \eqref{csqrt-id}, and the last one from $|\xi| \lesssim |\zeta|$.} Finally,
\[
\int_{\Omega'}\tilde{f}d\xi \lesssim |\zeta|^{-1} \int_{{\cb|\zeta|}}^{\infty}r^{-3+\delta}dr \lesssim |\zeta|^{-3+\delta}.
\]
Since $\zeta = \omega_l$, we can thus conclude that 
\begin{equation}\label{dbaru-first-term-est}
\sum_{m\in\La}|\omega_m|^{-3}(1+|\omega_m||\omega_l||\omega^-_{ml}|^{2-\delta})^{-1} \lesssim |\omega_l|^{-3+\delta}\log|\omega_l| \lesssim |\omega_l|^{-3+\tilde{\delta}},
\end{equation}
for any $\tilde{\delta} > \delta$.

For the second term on the right-hand side of \eqref{dbaru1}, we look at three regions separately: $\Omega_1 := B_{\frac{|l|}{2}}(0)$, $\Omega_2 := B_{\frac{|l|}{2}}(l)$ and $\Omega_3 :=\La\setminus(\Omega_1 \cup \Omega_2)$. We observe that
\[
\sum_{m\in\Omega_1}|D\bar{u}(m)|^2(1+|\omega_m||\omega_l||\omega^-_{ml}|^{2-\delta})^{-1} \lesssim |\omega_l|^{-3+\delta}\|D\bar{u}\|_{\ell^2} \lesssim |\omega_l|^{-3+\delta}
\]
Similarly, $m \in \Omega_3 \implies |\omega_{ml}^-|\gtrsim |\omega_l|$ and $|\omega_m| \gtrsim  |\omega_l|$, hence
\[
\sum_{m\in\Omega_3}|D\bar{u}(m)|^2(1+|\omega_m||\omega_l||\omega^-_{ml}|^{2-\delta})^{-1} \lesssim |\omega_l|^{-4}\sum_{m\in\Omega_3} |D\bar{u}(m)|^{2}\lesssim |\omega_l|^{-4}\|D\bar{u}\|^2_{\ell^2} \lesssim |\omega_l|^{-4}.
\]
Finally, we can always replace one power of $|D\hat{u}(m)|$ with the $\ell^{\infty}$--norm , thus allowing us to apply the Cauchy-Schwarz inequality to obtain
\[
\sum_{m\in\Omega_2}|D\bar{u}(m)|^2(1+|\omega_m||\omega_l||\omega^-_{ml}|^{2-\delta})^{-1} \lesssim \|D\bar{u}\|_{\ell^{\infty}(\Omega_2)}\|D\bar{u}\|_{\ell^{2}(\Omega_2)}\left(\sum_{m\in\Omega_2}(1+|\omega_m|^{2}|\omega_l|^{2}|\omega^-_{ml}|^{4-2\delta})^{-1}\right)^{1/2}
\]
Noting that the sum is finite and that $\Omega_2 \subset \La\setminus B_{\frac{|l|}{2}}(0)$ we combine this with \eqref{dbaru-first-term-est} to obtain 
\begin{align*}
|D_{\tau}\bar{u}(l)| \lesssim |\omega_l|^{-3+\tilde{\delta}} + \|D\bar{u}\|_{\ell^2(\La\setminus B_{\frac{|l|}{2}}(0))}\|D\bar{u}\|_{\ell^{\infty}(\La\setminus B_{\frac{|l|}{2}}(0))}.
\end{align*}
{\cb Subsequently we define $w(r):= \|D\bar{u}\|_{\ell^{\infty}\La\setminus B_{r}(0))}$ and employ a technical result detailed in \cite[Lemma 6.3, Step 2]{EOS2016pp} originating from the regularity theory for systems of elliptic PDEs \cite{giusti}, to conclude that the function $v(r):= r^{-3/2+\delta}w(r)$ is bounded on $\R_+$, which implies that}
\[
|D_{\tau}\bar{u}(l)| \lesssim |\omega_l|^{-3+\tilde{\delta}}.
\]
This estimate holds for an arbitrary $\tau \in \Rc(l)$ and arbitrarily small $\tilde{\delta} > 0$, hence we have established the result. The linear scaling with $\epsilon$ is evident from the fact that in the interpolation trick used to obtain \eqref{dhatudv}, the loading parameter can be taken outside the summation, thus persists linearly.
\newpage 
\subsection{Proofs for discrete lattice Green's function $\G$}\label{P-G}
\subsubsection{Setup}\label{P-G-setup}
As briefly described in Section \ref{Green}, the approach we employ is that we seek a lattice Green's function of the form $\G = \hat{\G} + \bar{\G}$, with $\hat{\G}$ explicitly known. In practice, we proceed by first considering two closely related predictor-corrector problems: one to find $\tilde{\G}_1(\cdot,s) \in \Hcc$ that satisfies \eqref{G-delta} for a fixed $s$ and the other to find $\tilde{\G}_2(m,\cdot) \in \Hcc$ that satisfies \eqref{G-delta} for a fixed $m$ but with $H$ applied to the second variable. To conclude the result, one then has to make a suitable adjustment that takes into account how $\Hcc$ is defined (in particular the restriction that $\tilde{\G}_1(\hat{x},s) = \tilde{\G}_2(m,\hat{x})=0$ resulting from \eqref{Hcc}).

Rewriting both problems in variational form, we consider
\begin{equation}\label{minprobG}
\text{find }\,\tilde{\G}_i \in \arg\min_{\Hcc} \tilde{\E_i},
\end{equation}
where
\begin{equation}\label{energyG1}
\tilde{\E_1}(\mathcal{F})  = \sum_{m \in \La}\Bigg[\frac{1}{2}\left(|D_1\hat{\G}(m,s) + D\mathcal{F}(m,s)|^{2} - |D_1\hat{\G}(m,s)|^{2}\right)- \delta_{ms}\left(\hat{\G}(m,s) +\mathcal{F}(m)\right)\Bigg],
\end{equation}
and
\begin{equation}\label{energyG2}
\tilde{\E_2}(\mathcal{F})  = \sum_{s \in \La}\Bigg[\frac{1}{2}\left(|D_2\hat{\G}(m,s) + D\mathcal{F}(s)|^{2} - |D_2\hat{\G}(m,s)|^{2}\right)- \delta_{ms}\left(\hat{\G}(m,s) +\mathcal{F}(s)\right)\Bigg],
\end{equation}
As in the case of the crack problem itself, the crucial step is the correct choice of the predictor $\hat{\G}$, which ensures the minimisation problems are well-defined. This can be achieved by prescribing $\hat{\G}$ which, away from the point source is equal to $\hat{G}$, which satisfies the corresponding continuum problem, i.e. it solves, for $s \in \La$ fixed,
\begin{align}
 -C_{\La}\Delta_x \hat{G}(x,s) &= \delta(x-s)\quad\text{ for } x\in\R^2\setminus\Gamma_0 \label{Gpred-eq1}\\
 \nabla_x\hat{G}(x,s) \cdot \nu &= 0\quad\quad\quad\quad\text{ for } x\in\Gamma_0 \nonumber,
\end{align}
and, for $x \in \La$ fixed,
\begin{align}
-C_{\La}\Delta_s \hat{G}(x,s) &= \delta(x-s)\quad\text{ for } s\in\R^2\setminus\Gamma_0 \label{Gpred-eq2}\\
 \nabla_s\hat{G}(x,s) \cdot \nu &= 0\quad\quad\quad\quad\text{ for } s\in\Gamma_0 \nonumber.
\end{align}
Here $\delta$ represents the Dirac delta. We refer to Remark \ref{Clambda} for a discussion about the constant $C_{\La}$. {\cb Since $\omega$ introduced in \eqref{sqrt-map} is a conformal mapping, it preserves harmonicity \cite{ablo-fokas}. Further, it maps the crack domain to a half-space domain (cf. Figure \ref{fig:distorted_lattice}), for which there exists a standard explicit formula for a Green's function, it therefore can be verified that \eqref{Gpred-eq1} has a solution}
\begin{equation}\label{Gcont}
 \hat{G}(x,s) =  \frac{-1}{2\pi C_{\La}}\big[\log(|\omega(x)-\omega(s)|) + \log(|\omega(x)-\omega^*(s)|)\big],
\end{equation}
where $\omega^*(x)$ is defined as the reflection of $\omega(x)$ through vertical axis, that is
\[
\omega^*(x) = \left({\cb -}\sqrt{r_x}\cos{\left(\sfrac{\theta_x}{2}\right)}, \sqrt{r_x}\sin{\left(\sfrac{\theta_x}{2}\right)}\right),
\]
where we refer to Figure \ref{fig:distorted_lattice} for a visualisation. 

It is easy to see that 
\begin{equation}\label{hatGvarsym}
\hat{G}(x,s) = \hat{G}(s,x),
\end{equation}
since $|\omega(x)-\omega(s)||\omega(x)-\omega^*(s)| = |\omega(x)-\omega(s)||\omega^*(x)-\omega(s)|${\cb , thus $\hat{G}$ also solves \eqref{Gpred-eq2}.} It is worth recalling that the complex square root mapping is also used to construct $\hat{u}$.

Finally, bearing in mind that $\hat{G}(s,s)$ is not well-defined in the pointwise sense, the predictor $\hat{\G}\,\colon\,\La\times\La \to \R$ we prescribe is given by
\begin{equation}\label{Gpred}
\hat{\G}(m,s) := \begin{cases} \hat{G}(m,s)&\quad\text{if }m \neq s, \\
						0&\quad\text{if } m = s, \end{cases}
\end{equation}
since near the point-source it will always be true that $\G(s,s) \sim \mathcal{O}(1)$.

\subsubsection{Proof of Theorem \ref{thm::G}: existence of a Green's function}
We begin by investigating the predictor $\hat{\G}$ and estimate the decay of its derivatives of relevant order.
\begin{lemma}\label{D1D2hatG}
For any $x,s \in \R^2\setminus\Gamma_0$ with $x,s \neq 0$ and $x\neq s$, and $\alpha \in \{1,2,3,4\}$ 
\begin{align}\
|\nabla^{\alpha}_x\hat{G}(x,s)| &\lesssim (1+|\omega_x|^{2\alpha-1}|\omega^-_{xs}|)^{-1} + (1+|\omega_x|^{\alpha}|\omega^-_{xs}|^{\alpha})^{-1}\nonumber \\
&=:g^{(a)}_{\alpha}(x,s) + g^{(b)}_{\alpha}(x,s)\label{gagb}
\end{align}
and
\begin{align}
|\nabla^{\alpha}_x\nabla_{s}\hat{G}(x,s)| &\lesssim (1+|\omega_x|^{2\alpha-1}|\omega_s||\omega^-_{xs}|^{2})^{-1} + (1+|\omega_x|^{\alpha}|\omega_s||\omega^-_{xs}|^{\alpha+1})^{-1}\nonumber \\
&=:h^{(a)}_{\alpha}(x,s) + h^{(b)}_{\alpha}(x,s)\label{hahb}.
\end{align}
Consequently, if $m,\,s \in \La$ and $\rho \in \Rc(m)$ and $\sigma \in \Rc(s)$, then 
\[
|D_{1,\rho}D_{2,\sigma} \hat{\G}(m,s)| \lesssim h^{(a)}_{1}(m,s).
\]
\end{lemma}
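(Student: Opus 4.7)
The starting point is the explicit representation
\[
\hat G(x,s) = -\frac{1}{2\pi C_\La}\bigl[\log|\omega(x)-\omega(s)| + \log|\omega(x)-\omega^*(s)|\bigr],
\]
so that $\hat G$ is the composition of a simple logarithmic function of $(\xi,\zeta)\in(\R^2_+)^2$ with the complex square root map. The plan is therefore to apply Faà di Bruno's chain rule together with the two basic ingredients $|\nabla^k\omega(x)|\lesssim |x|^{1/2-k}=|\omega_x|^{1-2k}$ (Lemma~\ref{gradomega}) and the elementary estimate $|\nabla_\xi^k\log|\xi-\zeta||\lesssim |\xi-\zeta|^{-k}$.

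For \eqref{gagb}, each summand in the expansion of $\nabla_x^\alpha \hat G$ is indexed by a partition $\pi$ of $\{1,\ldots,\alpha\}$, and the contribution coming from the $\log|\omega(x)-\omega(s)|$ term is bounded by
\[
|\omega^-_{xs}|^{-|\pi|}\prod_{B\in\pi}|\omega_x|^{1-2|B|} \;=\; |\omega_x|^{|\pi|-2\alpha}|\omega^-_{xs}|^{-|\pi|}.
\]
The two extreme partitions $|\pi|=1$ and $|\pi|=\alpha$ yield exactly $g^{(a)}_\alpha$ and $g^{(b)}_\alpha$. An elementary check using the ratio $r:=|\omega_x|/|\omega^-_{xs}|$ shows that the intermediate partitions $|\pi|\in\{2,\ldots,\alpha-1\}$ are dominated by $g^{(a)}_\alpha+g^{(b)}_\alpha$ (by $g^{(b)}_\alpha$ when $r\geq 1$ and by $g^{(a)}_\alpha$ when $r<1$). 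The $\log|\omega(x)-\omega^*(s)|$ summand is treated identically; here I will exploit the geometric observation that both $\omega(x)$ and $\omega(s)$ lie in the closed right half-plane, so $|\omega(x)-\omega^*(s)|\geq|\omega(x)-\omega(s)|=|\omega^-_{xs}|$, which absorbs the $\omega^*$-contribution into the bounds already derived.

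The mixed estimate \eqref{hahb} follows the same blueprint: the additional $\nabla_s$ contributes one factor $|\nabla\omega(s)|\lesssim|\omega_s|^{-1}$ together with one extra $\nabla_\zeta$ on the logarithm, which shifts $|\omega^-_{xs}|^{-|\pi|}$ to $|\omega_s|^{-1}|\omega^-_{xs}|^{-1-|\pi|}$; the two extremes $|\pi|=1$ and $|\pi|=\alpha$ then reproduce $h^{(a)}_\alpha$ and $h^{(b)}_\alpha$ respectively.

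For the discrete corollary I will invoke the $\alpha=1$ case of \eqref{hahb} (for which $h^{(a)}_1$ and $h^{(b)}_1$ coincide). Provided none of the four evaluation points $(m,s),(m+\rho,s),(m,s+\sigma),(m+\rho,s+\sigma)$ lies on the diagonal $\{y=z\}$, one can write $D_{1,\rho}D_{2,\sigma}\hat\G(m,s)=\int_0^1\!\!\int_0^1 \partial_\rho\partial_\sigma\hat G(m+t\rho,s+\tau\sigma)\,dt\,d\tau$, and---because $\Rc(m)$ excludes crack-crossing bonds---the integration remains in $\R^2\setminus\Gamma_0$; a short geometric comparison then shows that $|\omega_{m+t\rho}|\sim|\omega_m|$, $|\omega_{s+\tau\sigma}|\sim|\omega_s|$ and $|\omega^-_{m+t\rho,s+\tau\sigma}|\sim|\omega^-_{ms}|$ uniformly in $(t,\tau)\in[0,1]^2$, so integrating the pointwise bound $h^{(a)}_1$ yields exactly the claimed estimate. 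The one piece I expect to require extra care is the diagonal case (where the convention $\hat\G(y,y)=0$ must be used instead of the formula for $\hat G$): here $|\omega^-_{ms}|\sim |\omega_m|^{-1}$ by the identity \eqref{csqrt-id}, so the target bound degenerates to $\lesssim 1$, which can be checked directly from the formula for $\hat G$ on nearest-neighbour pairs, exploiting the leading-order cancellation between $\log|\omega^-_{m,m+\rho}|$ and $\log|\omega_m-\omega^*_{m+\rho}|$. This diagonal bookkeeping and the uniform comparability of $|\omega|$ along unit-length bonds near $\Gamma$ are the only steps that require genuine case analysis; the rest is a routine application of the chain rule.
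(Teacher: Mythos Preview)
Your proposal is correct and follows essentially the same route as the paper. The paper carries out the chain-rule computation explicitly for $\alpha=1,2$ (and the mixed cases), then remarks that for $\alpha\geq 3$ ``extra terms corresponding to intermediate permutations of powers'' appear but ``can always be bounded by the two extreme permutations''---precisely your Fa\`a di Bruno argument with the $r=|\omega_x|/|\omega^-_{xs}|$ dichotomy made explicit. Your treatment of the $\omega^*$ term via the half-plane inequality $|\omega(x)-\omega^*(s)|\geq|\omega^-_{xs}|$ is a clean justification of what the paper asserts in one line (``the part \ldots\ that includes $\omega^*(s)$ does not decay any slower''), and your discussion of the diagonal case in the discrete corollary is more careful than the paper, which simply writes $|D_{1,\rho}D_{2,\sigma}\hat\G(m,s)|\lesssim|\nabla_m\nabla_s\hat\G(m,s)|$ without comment.
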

\begin{proof}
We first notice that it is sufficient to estimate $L(x,s) := \log(|\omega_{xs}^-|)$, since the part of \eqref{Gcont} that includes $\omega^*(s)$ does not decay any slower. We calculate that
\[
\nabla_x L(x,s) = \frac{1}{|\omega_{xs}^-|^2}\nabla \omega(x) \omega_{xs}^- \implies |\nabla_x L(x,s)| \lesssim |\omega_{xs}^-|^{-1}|\nabla\omega(x)| \lesssim  |\omega_x|^{-1}|\omega_{xs}^-|^{-1}.
\]
Similarly
\[
\nabla^2_x L(x,s) = \frac{1}{|\omega_{xs}^-|^2}\left(\nabla^2\omega(x)[\omega_{xs}^-] + \nabla\omega(x)\cdot \nabla\omega(x)\right) - \frac{2}{|\omega_{xs}^-|^4}\left(\nabla\omega(x)\omega_{xs}^-\right)^{\otimes 2},
\]
which implies that
\begin{align*}
|\nabla^2_x L(x,s)| &\lesssim |\omega_{xs}^-|^{-1}|\nabla^2\omega(x)| + |\omega_{xs}^-|^{-2}|\nabla\omega(x)|^2 \\
&\lesssim |\omega_x|^{-3}|\omega_{xs}^-|^{-1} + |\omega_x|^{-2}|\omega_{xs}^-|^{-2}.
\end{align*}
For mixed derivatives we first calculate
\begin{align*}
&\nabla_s \nabla_x L(x,s) = \frac{2}{|\omega_{xs}^-| ^4}\left(\nabla\omega(x)\omega_{xs}^-\right) \otimes\left(\nabla \omega(s)\omega_{xs}^-\right) - \frac{1}{|\omega_{xs}^-|^2}\nabla\omega(x)\cdot\nabla\omega(s)\\
&\implies |\nabla_s \nabla_x L(x,s)|\lesssim |\omega_{xs}^-|^{-2}|\nabla\omega(x)||\nabla\omega(s)|\lesssim |\omega_x|^{-1}|\omega_s|^{-1}|\omega_{xs}^-|^{-2}
\end{align*}
and further realise that
\begin{align*}
\nabla_x^2\nabla_s L(x,s) &= \frac{-8}{|\omega_{xs}^-|^6}\left(\nabla\omega(x)\omega_{xs}^-\right)^{\otimes 2}\otimes \left(\nabla\omega(s)\omega_{xs}^-\right)\\
&+ \frac{2}{|\omega_{xs}^-|^4}\left(\nabla^2\omega(x)[\omega_{xs}^-] + \nabla\omega(x)\cdot \nabla\omega(x)\right)\otimes\left(\nabla\omega(s) \omega_{xs}^-\right)\\
&+ \frac{4}{|\omega_{xs}^-|^4}\left(\nabla\omega(x)\omega_{xs}^-\right)\otimes\left (\nabla\omega(s)\cdot\nabla\omega(x)\right) - \frac{1}{|\omega_{xs}^-|^2}\nabla^2\omega(x)[\nabla\omega(s)],
\end{align*}
which leads to
\[
|\nabla_x^2\nabla_s L(x,s)| \lesssim |\omega_x|^{-3}|\omega_s|^{-1}|\omega_{xs}^-|^{-2} + |\omega_x|^{-2}|\omega_s|^{-1}|\omega_{xs}^-|^{-3}.
\]
Remaining cases can be calculated along similar lines, but for the sake of brevity we choose to omit these tedious calculations. In particular, for $\alpha \geq 3$ there begin to appear extra terms corresponding to intermediate permutations of powers, but these can always be bounded by the two extreme permutations stated.

The facts that $|\Dm{\rho}\Ds{\sigma} \hat{\G}(m,s)| \lesssim |\nabla_m\nabla_s \hat{\G}(m,s)|$ and $h^{(a)}_{1} \equiv h^{(b)}_{1}$ conclude the proof.
\end{proof}

\begin{prop}\label{EG-well-def}
For any $s \in \La$ ($m \in \La$ respectively) the energy difference functional $\hat{\E}_1$ ($\hat{\E}_2$ resp.) in \eqref{energyG1} (\eqref{energyG2} resp.) is well-defined on $\Hcc$ and infinitely many times differentiable. 
\end{prop}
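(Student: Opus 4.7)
The plan is to exploit the fact that $\tilde{\E}_1$ (and $\tilde{\E}_2$) is quadratic in $\mathcal{F}$, so smoothness is automatic once continuity is established. Using $\hat{\G}(s,s)=0$ and the expansion $|a+b|^2-|a|^2=2a\cdot b+|b|^2$, I would rewrite
$$\tilde{\E}_1(\mathcal{F}) \;=\; \ell(\mathcal{F}) + \tfrac{1}{2}\|D\mathcal{F}\|_{\ell^2}^2, \qquad \ell(\mathcal{F}) \;:=\; \sum_{m\in\La} D_1\hat{\G}(m,s)\cdot D\mathcal{F}(m) - \mathcal{F}(s).$$
The quadratic term is trivially continuous on $\Hcc$, so everything reduces to showing that $\ell$ is a bounded linear functional on $\Hcc$; the case of $\tilde{\E}_2$ is identical after invoking the symmetry $\hat G(x,s)=\hat G(s,x)$ from \eqref{hatGvarsym}. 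The subtle point is that a naive Cauchy--Schwarz bound on the linear term fails: the estimate $|D_1\hat{\G}(m,s)|\lesssim (1+|\omega_m||\omega_{ms}^-|)^{-1}$ from Lemma \ref{D1D2hatG} at $\alpha=1$ is not $\ell^2$-summable (in $\omega$-coordinates the density is $|\xi-\zeta|^{-2}$ at infinity), which is nothing but the familiar logarithmic divergence of the 2D Dirichlet energy of the Green's function.

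To overcome this I would replicate the P1 interpolation trick from the proof of Theorem \ref{Eu-well-def}. Let $I\mathcal{F}$ be the P1 interpolant of $\mathcal{F}$ extended across $\Omega_\Gamma\setminus\Gamma_0$ exactly as described there; since $s\in\La$, one has $I\mathcal{F}(s)=\mathcal{F}(s)$. Because $\hat G(\cdot,s)$ solves \eqref{Gpred-eq1} with homogeneous Neumann condition on $\Gamma_0$, distributional integration by parts (on a truncation that is then sent to infinity) gives
$$C_{\La}\int_{\R^2\setminus\Gamma_0}\nabla_x\hat G(x,s)\cdot\nabla I\mathcal{F}(x)\,dx \;=\; I\mathcal{F}(s) \;=\; \mathcal{F}(s).$$
Splitting the integral over the bond-cells $U_{m\rho}$ and $Q_{m\rho}$ and the origin patch $\Omega_0$ exactly as in the derivation of \eqref{dhatudv}, and using that $\nabla I\mathcal{F}$ is piecewise constant, this identity converts into
$$\ell(\mathcal{F}) \;=\; \sum_{m\in\La}\sum_{\rho\in\Rc(m)} \mathrm{Err}(m,\rho,s)\,D_\rho\mathcal{F}(m),$$
with quadrature errors controlled by $|\nabla_x^3\hat G(m,s)|$ on interior bonds and by $|\nabla_x^2\hat G(m,s)|$ on bonds of $\Gamma$, together with a uniformly bounded contribution from the finitely many bonds inside $Q_0$ coming from local integrability of $\nabla^j\hat G(\cdot,s)$ near the origin.

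Cauchy--Schwarz then reduces boundedness of $\ell$ to $\ell^2$-summability of the error kernels. Using Lemma \ref{D1D2hatG} at $\alpha=2,3$ and the change of variables $\xi=\omega_m$ (with $dm\sim |\xi|^2d\xi$, $\zeta=\omega_s$), the relevant sums are controlled by integrals of the form $\int|\xi|^2(1+|\xi|^{2\alpha-1}|\xi-\zeta|)^{-2}d\xi$ and $\int|\xi|^2(1+|\xi|^{\alpha}|\xi-\zeta|^{\alpha})^{-2}d\xi$ on $\R_+^2$, which I expect to converge for all $\alpha\geq 2$ after the standard splitting into balls around $0$, around $\zeta$, and the complement, much as in the estimate leading to \eqref{dbaru-first-term-est}. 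Finally $\mathcal{F}(s)$ itself is bounded on $\Hcc$ by telescoping $D_\rho\mathcal{F}$ along any lattice path from $\hat x$ to $s$ and applying Cauchy--Schwarz, with a constant depending on $|s|$. Since $\tilde{\E}_i$ is affine-quadratic on the Hilbert space $\Hcc$, infinite differentiability is then immediate (all derivatives of order $\geq 3$ vanish identically). The main obstacle in all this is the interpolation-plus-integration-by-parts identity above: one must absorb the Dirac source at $s$ into $\mathcal{F}(s)$ in such a way that the remainder is manifestly $\ell^2$-summable, while simultaneously respecting the crack geometry near the origin in exactly the way already carried out for $\hat u$ in Section \ref{P-u}.
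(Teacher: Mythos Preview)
Your proposal is correct and follows essentially the same route as the paper: decompose $\tilde{\E}_1$ into a quadratic part plus the linear functional $\langle\delta\tilde{\E}_1(0),v\rangle$, then use the P1 interpolation trick and the PDE \eqref{Gpred-eq1} to convert the problematic term $\sum D_1\hat{\G}\cdot Dv - v(s)$ into quadrature errors governed by $|\nabla_x^3\hat G|$ off $\Gamma$ and $|\nabla_x^2\hat G|$ on $\Gamma$. The only notable difference is that the paper dispatches the $\ell^2$-summability of these error kernels with the crude bounds $g^{(a)}_3,g^{(b)}_3\lesssim |m|^{-3/2}$ (square-summable in 2D) and $g^{(a)}_2,g^{(b)}_2\lesssim |m|^{-1}$ (square-summable on the one-dimensional set $\Gamma$), so your proposed $\omega$-coordinate region-splitting and your separate telescoping bound on $\mathcal{F}(s)$ are both unnecessary.
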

\begin{proof}
Here we will explicitly consider the part of the proof related to  $\tilde{E}_1$, as then the variable symmetry of the predictor, i.e. $\hat{\G}(m,s) = \hat{\G}(s,m)$, implies the other part.

For any $v \in \Hcc$ we can rewrite the energy difference functional $\tilde{\E}_1$ given by \eqref{energyG1} as
\[
\tilde{\E}_1(v) = \tilde{\E_0}(v) + \<{\delta \tilde{\E_1}(0)}{v},	
\]
where
\begin{align*}
\tilde{\E_0}(v) := \sum_{m \in \La}\Bigg[\frac{1}{2}|D_1\hat{\G}(m,s) + Dv(m)|^{2} - \frac{1}{2}|D_1\hat{\G}(m,s)|^{2}- D_1\hat{\G}(m,s)\cdot Dv(m) - \delta(m,s)\hat{\G}(m,s)\Bigg]
\end{align*}
and
\[
\<{\delta \tilde{\E}_1(0)}{v} = \sum_{m \in \La}\Big(D_1\hat{\G}(m,s)\cdot Dv(m) - \delta(m,s)v(m)\Big).
\]
{\cb Due to the quadratic nature of the energy, $\tilde{\E_0}$ reduces to $\tilde{\E_0}(v) = \frac{1}{2}\|v\|_{\Hcc}^2 - \hat{\G}(s,s)$ and thus is well-defined on $\Hcc$.}  For the second term, we aim to establish that $\delta\tilde{\E}_1(0)$ is a bounded linear functional on $\Hcc$ and to achieve that we use the fact that $\hat{G}$ solves the equation given by \eqref{Gpred-eq1} by applying the same interpolation construction as in Section \ref{P-u}. Consequently, we can thus write
\begin{align*}
\sum_{m\in\La}\delta_{ms}v(m) = v(s) &=  C_{\La}\int_{\R^2\setminus\Gamma_0} \nabla \hat{G}(x,s) \cdot \nabla Iv(x)\,dx,
\end{align*}
and in particular the second equality follows from the weak form of \eqref{Gpred-eq1} and the boundary term is not there due to the boundary condition. Mirroring the argument in Section \ref{P-u} we can conclude that
\begin{align*}
\<{\delta \tilde{\E}_1(0)}{v} &= \sum_{b(m,\rho)\not\subset\Gamma}\left(\Dm{\rho}\hat{\G}(m,s) - \int_{U_{m\rho}}\nabla_{\rho}\hat{G}(x,s)\,dx\right)D_{\rho}v(m)\\ 
&+ \sum_{b(m,\rho)\subset\Gamma}\left(\Dm{\rho}\hat{\G}(m,s) - \int_{U_{m\rho}}\nabla_{\rho}\hat{G}(x,s)\,dx - \int_{Q_{m\rho}}\nabla_{\rho}\hat{G}(x,s)\,dx\right)D_{\rho}v(m)\\
&+\sum_{b(m,\rho) \subset Q_0}f(m,s)D_{\rho}v(m),
\end{align*}
where this time $|f(m,s)|\lesssim |\omega_s|^{-1}$, as 
\[
\int_{B_1(0)\setminus\Gamma_0}|\nabla\hat{G}(x,s)|dx \lesssim |\omega_s|^{-1}\int_{0}^1|x|^{-1/2}dx \lesssim |\omega_s|^{-1}.
\]
Once again employing a Taylor expansion followed by a standard quadrature result results in
\[
b(m,\rho)\not\subset\Gamma \implies \left|\Dm{\rho}\hat{\G}(m,s) - \int_{U_{m\rho}}\nabla_{\rho}\hat{G}(x,s)\,dx\right| \lesssim |\nabla_x^3\hat{G}(x,s)|
\]
and
\[
b(m,\rho)\subset\Gamma\setminus Q_0 \implies \left|\Dm{\rho}\hat{\G}(m,s) - \int_{U_{m\rho}}\nabla_{\rho}\hat{G}(x,s)\,dx\right| \lesssim |\nabla_x^2\hat{G}(x,s)|.
\]
Finally, since Lemma \ref{D1D2hatG} implies that for both $j=2,3$ and  $x_0\approx 0$ we have $|\nabla_x^j\hat{G}(x_0,s)| \sim |\omega_s|^{-1}$, we can incorporate any bond $b(m,\rho)\subset Q_0$ into the general conclusion that
\[
\<{\delta \tilde{\E}_1(0)}{v} \lesssim \sum_{i=1}^4I_i(v),
\]
with
\begin{equation}\label{I12}
I_1(v) := \sum_{b(m,\rho)\not\subset\Gamma}g^{(a)}_{3}(m)|D_{\rho}v(m)|, \quad I_2(v) := \sum_{b(m,\rho)\not\subset\Gamma}g^{(b)}_{3}(m)|D_{\rho}v(m)| 
\end{equation}
and
\begin{equation}\label{I34}
I_3(v) := \sum_{b(m,\rho)\subset\Gamma}g^{(a)}_{2}(m)|D_{\rho}v(m)|,\quad I_4(v) := \sum_{b(m,\rho)\subset\Gamma}g^{(b)}_{2}(m)|D_{\rho}v(m)|,
\end{equation}
where $g^{(a)}_{\alpha}$ and $g^{(b)}_{\alpha}$ were defined in \eqref{gagb}.

Since $|\omega(m)| = |x|^{-1/2}$, we have $|g^{(a)}_{3}|,|g^{(b)}_{3}| \lesssim |m|^{-3/2}$, which is enough to conclude  $I_1(\cdot)$ and $I_2(\cdot)$ are bounded on $\Hcc$.

Similarly, $|g^{(a)}_{2}|,|g^{(b)}_{2}| \lesssim |m|^{-1}$ and thus $I_3(\cdot)$ and $I_4(\cdot)$ are bounded on $\Hcc$, {\cb since their domain of summation is one--dimensional}. Hence we can conclude that for any $v \in \Hcc$,
\[
\<{\delta \tilde{\E}_1(0)}{v} \lesssim \|Dv\|_{\ell^2}.
\]
{\cb To conclude $\tilde{\E}_1$ is $C^{\infty}$ we first note that $\frac{1}{2}|D\mathcal{F}(m)|^2 = \sum_{\rho\in\Rc(m)}\tilde{\phi}(D_{\rho}\mathcal{F}(m))$, where $\tilde{\phi}(r) := \frac{1}{2}r^2$ is a quadratic pair-potential, with $\tilde{\phi}^{(j)} \equiv 0$ for $j \geq 3$, thus, similarly as in the corresponding part of the proof of Theorem \ref{Eu-well-def}, the argument of \cite{2012-ARMA-cb} applies. Likewise, the first variation of the Kronecker delta term can be explicitly calculated and all subsequent ones vanish, thus giving the result.}
\end{proof}
\begin{lemma}\label{G-side-lemma}
For any $s \in \La$, the minimisation problem \eqref{minprobG} for $i=1$ has a unique solution $\tilde{\G}_1(\cdot,s) \in \Hcc$. Similarly, for any $m \in \La$, the minimisation problem \eqref{minprobG} for $i=2$ has a unique solution $\tilde{\G}_2(m,\cdot) \in \Hcc$.
\end{lemma}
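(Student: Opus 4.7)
The functional $\tilde{\E}_1$ is a quadratic functional on the Hilbert space $\Hcc$, and the substantive analytical work has already been carried out in Proposition \ref{EG-well-def}. The plan is to expand $\tilde{\E}_1$ into its quadratic and linear parts and then invoke the Riesz representation theorem.

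Expanding the squared term in \eqref{energyG1} and using that $\hat{\G}(s,s) = 0$ by \eqref{Gpred}, we obtain
\[
\tilde{\E}_1(\mathcal{F}) = \tfrac{1}{2}\|\mathcal{F}\|_{\Hcc}^{2} + \<{\delta \tilde{\E}_1(0)}{\mathcal{F}},
\]
where $\<{\delta \tilde{\E}_1(0)}{\mathcal{F}} = \sum_{m \in \La}\bigl(D_1\hat{\G}(m,s)\cdot D\mathcal{F}(m) - \delta_{ms}\mathcal{F}(m)\bigr)$. The quadratic part is precisely the squared Hilbert norm, hence strictly convex and coercive on $\Hcc$; by Proposition \ref{EG-well-def}, the linear part is a bounded functional on $\Hcc$. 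Applying the Riesz representation theorem to $-\delta\tilde{\E}_1(0) \in (\Hcc)^{*}$ therefore produces a unique element $\tilde{\G}_1(\cdot,s) \in \Hcc$ characterised by
\[
\<{\tilde{\G}_1(\cdot,s)}{v}_{\Hcc} = -\<{\delta \tilde{\E}_1(0)}{v} \qquad \forall v \in \Hcc.
\]
This element is the unique critical point of $\tilde{\E}_1$ and, by strict convexity, its unique global minimiser over $\Hcc$.

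The case $i=2$ proceeds identically: the variable symmetry of the predictor established in \eqref{hatGvarsym}, namely $\hat{\G}(m,s) = \hat{\G}(s,m)$, allows us to repeat the argument of Proposition \ref{EG-well-def} with the roles of the two arguments swapped, yielding boundedness of $\delta\tilde{\E}_2(0)$ on $\Hcc$ and hence, via Riesz, a unique minimiser $\tilde{\G}_2(m,\cdot) \in \Hcc$. There is no genuine obstacle in this lemma: all of the analytic weight has been discharged in Proposition \ref{EG-well-def}, and what remains is a direct application of standard Hilbert space optimisation.
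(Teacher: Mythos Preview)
Your proof is correct and matches the paper's approach: the paper also reduces to the quadratic structure established in Proposition~\ref{EG-well-def} and then invokes a standard Hilbert-space existence result (the paper cites Lax--Milgram, you cite Riesz, but since the bilinear form here is exactly the $\Hcc$ inner product the two are equivalent). No gap.
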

\begin{proof}
The existence $\tilde{\G}_1(\cdot,s)$ and $\tilde{\G}_2(m,\cdot)$ is guaranteed by the {\cb quadratic nature of the energy $\tilde{\E}_i$, which implies that the problem of finding its critical point is linear}, thus allowing us to invoke the standard Lax-Milgram lemma. The minimisers satisfy
\begin{equation}\label{G_min_eq}
\<{\delta \tilde{\E}_1(\tilde{\G}_1)}{v} = 0\quad\text{and}\quad\<{\delta \tilde{\E}_2(\tilde{\G}_2)}{v} = 0\quad\forall v \in \Hcc ,
\end{equation}
where
\[
\<{\delta \tilde{\E}_1(\tilde{\G}_1)}{v} = \sum_{m \in \La} (D_{1}\hat{\G}(m,s) + D_{1}\tilde{\G}_1(m,s))\cdot Dv(m) \,-\delta_{ms}v(m),
\]
\[
\<{\delta \tilde{\E}_2(\tilde{\G}_2)}{v} = \sum_{s \in \La} (D_{2}\hat{\G}(m,s) + D_{2}\tilde{\G}_2(m,s))\cdot Dv(s) \,-\delta_{sm}v(s).
\]
\end{proof}
It can be readily checked that in fact $\tilde{\G}_2(m,s) = \tilde{\G}_1(s,m)$, in particular since the restriction in the definition of $\Hcc$ is satisfied, that is for $m \in \La$ we indeed have $\tilde{\G}_2(m,\hat{x}) = \tilde{\G}_1(\hat{x},m) = 0$. Thus we drop the subscripts and identify $\tilde{\G} \equiv \tilde{\G}_1$. In order to conclude the statement of Theorem \ref{thm::G}, it remains to show that $\tilde{\G}(m,s) = \tilde{\G}(s,m)$. In turns out, however, that this cannot be guaranteed without making a suitable adjustment that correctly takes into account the definition of $\Hcc$. The following weaker preliminary result is first obtained.
\begin{lemma}\label{G-side-lemma2}
For any $l,s \in \La$ and $\lambda \in \Rc(l)$, $\tau \in \Rc(s)$, the unique solution $\tilde{\G}$ from Lemma \ref{G-side-lemma} satisfies
\[
D_{1\lambda}D_{2\tau}\tilde{\G}(l,s) = D_{2\lambda}D_{1\tau}\tilde{\G}(s,l).
\]
\end{lemma}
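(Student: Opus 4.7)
Since the predictor $\hat{\G}$ is symmetric in its two arguments by \eqref{hatGvarsym}, it trivially satisfies the same mixed--difference identity as the one being claimed for $\tilde{\G}$. I would therefore reduce the problem to establishing
\[
D_{1\lambda}D_{2\tau}\G(l,s) = D_{2\lambda}D_{1\tau}\G(s,l)
\]
with $\G := \hat{\G} + \tilde{\G}$. The strategy is to realise both sides of this identity as values of the symmetric bilinear form $(u,v)\mapsto \sum_m Du(m)\cdot Dv(m)$, evaluated on one--sided differences of $\G$, and then to invoke the symmetry of this form.

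Starting from the Euler--Lagrange equation \eqref{G_min_eq}, which rewrites compactly as $\sum_m D\G(m,l) \cdot Dv(m) = v(l)$ for every $v \in \Hcc$ (the convention $v(\hat{x})=0$ automatically handling the case $l=\hat{x}$), I would take the finite difference in $l$ with step $\lambda \in \Rc(l)$ to produce
\[
\sum_m DU_{l,\lambda}(m) \cdot Dv(m) = v(l+\lambda) - v(l),\qquad U_{l,\lambda}(m) := \G(m,l+\lambda)-\G(m,l).
\]
The natural choice $v = U_{s,\tau}$ would produce the desired right-hand side $D_{1\lambda}D_{2\tau}\G(l,s)$, but it fails because $U_{s,\tau}(\hat{x}) = \hat{\G}(\hat{x},s+\tau)-\hat{\G}(\hat{x},s)$ need not vanish (while $\tilde{\G}(\hat{x},\cdot) = 0$ by construction), so $U_{s,\tau} \notin \Hcc$ in general. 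My plan is to subtract the constant: the shifted function $\tilde{U}_{s,\tau} := U_{s,\tau} - U_{s,\tau}(\hat{x})$ lies in $\Hcc$, has the same discrete gradient, and also satisfies $\tilde{U}_{s,\tau}(l+\lambda)-\tilde{U}_{s,\tau}(l) = D_{1\lambda}D_{2\tau}\G(l,s)$. This gives
\[
\sum_m DU_{l,\lambda}(m)\cdot DU_{s,\tau}(m) = D_{1\lambda}D_{2\tau}\G(l,s),
\]
and an identical argument with the roles of $(l,\lambda)$ and $(s,\tau)$ interchanged identifies the same sum with $D_{2\lambda}D_{1\tau}\G(s,l)$ through the symmetry of the bilinear form. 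Subtracting the corresponding (trivial) identity for $\hat{\G}$ yields the claim.

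The principal technical point, and thus the main potential obstacle, is the verification that $\tilde{U}_{s,\tau}$ actually lies in $\Hcc$, which reduces to checking that $DU_{s,\tau} \in \ell^2$. The $\tilde{\G}$--contribution is a difference of two $\Hcc$ functions and so is automatic, hence the issue localises to showing that the mixed finite difference $D_1D_{2\tau}\hat{\G}(\cdot,s)$ is square--summable in its first argument. This in turn follows from the pointwise bound $h^{(a)}_1(m,s)$ of Lemma \ref{D1D2hatG} combined with a change of variables $\xi = \omega(m)$ of the kind already carried out in Section \ref{P-u2}; once this integrability is secured, the remainder of the argument is a direct rearrangement of the Euler--Lagrange identity.
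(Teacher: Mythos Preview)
Your proposal is correct and follows essentially the same approach as the paper: both use the Euler--Lagrange identity $\sum_m D_1\G(m,\cdot)\cdot Dv(m) = v(\cdot)$ together with the square--summability of $D_1D_{2\tau}\hat{\G}(\cdot,s)$ from Lemma~\ref{D1D2hatG}, and then exploit the symmetry of the resulting bilinear form. The only difference is organisational: you work with the full $\G$ and subtract the trivial $\hat{\G}$--identity at the end, whereas the paper splits $\G=\hat{\G}+\tilde{\G}$ earlier and handles the predictor/corrector cross--term via a second application of the Euler--Lagrange equation; the two routes are equivalent.
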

\begin{proof}
Noting the first equation in \eqref{G_min_eq}, we can write
\[
D_{1\lambda}D_{2\tau}\tilde{\G}(l,s) = \sum_{m\in\La}\Dm{}\Ds{\tau}\tilde{\G}(m,s)\cdot\Dm{}\Ds{\lambda}(\hat{\G} + \tilde{\G})(m,l)
\]
and since Lemma \ref{D1D2hatG} ensures that $\left(\Ds{\lambda}\hat{\G}(\cdot,l) - \Ds{\lambda}\hat{\G}(\hat{x},l)\right) \in \Hcc$, we can split this infinite sum and write
\[
D_{1\lambda}D_{2\tau}\tilde{\G}(l,s) = A + B,
\]
where
\[
A := \sum_{m\in\La}\Dm{}\Ds{\tau}\tilde{\G}(m,s)\cdot\Dm{}\Ds{\lambda}\tilde{\G}(m,l)\;\text{ and }\; B := \sum_{m\in\La}\Dm{}\Ds{\tau}\tilde{\G}(m,s)\cdot\Dm{}\Ds{\lambda}\hat{\G}(m,l).
\]
{\cb Since $\delta\tilde{\E}_1$ is linear, one starts by observing that the first equation of \eqref{G_min_eq} implies, for all $v \in \Hcc$, 
\begin{equation}\label{gmineqd2}
0 = \<{\delta\tilde{\E}_1(\tilde{\G}(\cdot,s+\tau))}{v} -  \<{\delta\tilde{\E}_1(\tilde{\G}(\cdot,s))}{v} = \<{\delta\tilde{\E}_1(\Ds{\tau}\tilde{\G}(\cdot,s))}{v}.
\end{equation}
Treating $\left(\Ds{\lambda}\hat{\G}(\cdot,l) - \Ds{\lambda}\hat{\G}(\hat{x},l)\right) \in \Hcc$ as a test function in \eqref{gmineqd2}, we subtract this expression from $B$ to conclude that }
\[
B = \sum_{m\in\La}-\Dm{}\Ds{\tau}\hat{\G}(m,s)\cdot\Dm{}\Ds{\lambda}\hat{\G}(m,l) + \Dm{\tau}\Ds{\lambda}\hat{\G}(s,l).
\]
The same analysis can be employed to further conclude that
\[
D_{2\lambda}D_{1\tau}\tilde{\G}(s,l) = A + C,
\]
where $A$ as above and 
\[
C: = \sum_{m\in\La}\Dm{}\Ds{\lambda}\tilde{\G}(m,l)\cdot\Dm{}\Ds{\tau}\hat{\G}(m,s) = \sum_{m\in\La}-\Dm{}\Ds{\lambda}\hat{\G}(m,l)\cdot\Dm{}\Ds{\tau}\hat{\G}(m,s) + \Dm{\lambda}\Ds{\tau}\hat{\G}(l,s),
\]
where the final passage follows from applying the first equality in \eqref{G_min_eq}. Finally, noting that the variable symmetry of $\hat{\G}$ stated in \eqref{hatGvarsym} implies that
\[
\Dm{\tau}\Ds{\lambda}\hat{\G}(s,l) = \Dm{\lambda}\Ds{\tau}\hat{\G}(l,s),
\]
we can conclude that $B \equiv C$, thus establishing the result. 
\end{proof}
We are now in a position to prove the main result of this section. 
\begin{proof}[Proof of Theorem \ref{thm::G}: existence of a Green's function]
Starting with the equality established in Lemma \ref{G-side-lemma2}, we can apply the indefinite sum operator (discrete anologue of indefinite integration, cf. \cite{jordan}) in the second variable to conclude that  
\[
D_{1\lambda}\tilde{\G}(m,s) = D_{2\lambda}\tilde{\G}(s,m) + f_{\lambda}(m),
\]
for some lattice function $f_{\lambda}\,\colon\,\La \to \R$. Similarly, applying indefinite sum operator in the first variable implies that
\begin{equation}\label{G-sym-rel1a}
\tilde{\G}(m,s) = \tilde{\G}(s,m) + F(m) + K_1(s),
\end{equation}
where $D_{\tau}F(m) = f_{\lambda}(m)$ and $K_1$ a is a lattice function to be determined and originating from indefinite summation. We can repeat the procedure in the reverse order to obtain
\[
D_{2\tau}\tilde{\G}(m,s) = D_{2\tau}\tilde{\G}(s,m) + k_{\tau}(s).
\]
Taking $\tau = \lambda$ and exchanging $m$ and $s$ we obtain that for any lattice direction $\lambda$ we have $k_{\lambda}(m) = - f_{\lambda}(m)$. Indefinitely summing one more time results in  
\begin{equation}\label{G-sym-rel1b}
\tilde{\G}(m,s) = \tilde{\G}(s,m) -F(s) + K_2(m).
\end{equation}
Comparing \eqref{G-sym-rel1a} and \eqref{G-sym-rel1b} we conclude that $K_1(s) = -F(s)$ and $K_2(m) = F(m)$ and thus 
\begin{equation}\label{G-sym-rel1}
\tilde{\G}(m,s) = \tilde{\G}(s,m) + F(m) - F(s),
\end{equation}
for some $F$ that arises from the restriction in the definition of the energy space $\Hcc$. By adding and subtracting the same constant we can in fact also write that
\[
\tilde{\G}(m,s) + F_2(s) = \tilde{\G}(s,m) + F_2(m),
\]
where $F_2(m) := F(m) - F(\hat{x})$, which conveniently implies that $F_2(\hat{x}) = 0$. We now let $m = \hat{x}$ and realise that
\[
F_2(s) = \tilde{\G}(s,\hat{x}).
\]
Thus the actual relation is given by
\begin{equation}\label{gbargbarbar}
\tilde{\G}(m,s) + \tilde{\G}(s,\hat{x}) = \tilde{\G}(s,m) + \tilde{\G}(m,\hat{x})
\end{equation}
and we can conclude the proof by stating that the atomistic correction $\bar{\G}$ we sought is given by $\bar{\G}(m,s)= \tilde{\G}(m,s) + \tilde{\G}(s,\hat{x})$, as it clearly satisfies both equations in \eqref{G_min_eq} and in addition $\bar{\G}(m,s) = \bar{\G}(s,m)$.
\end{proof}
\subsubsection{Proof of Theorem \ref{thm::G}: Green's function decay estimate}\label{P-G2}
The decay of $|\Dm{}\Ds{}\hat{G}|$ is explicitly calculated in Lemma \ref{D1D2hatG}, thus we turn our attention to the decay of the corrector $\bar{\G}$. The general approach we employ is to get insight in the decay behaviour of $\bar{\G}$ in different regions on $\La$. For a fixed $s \in \La$ with $|s|$ large enough, we carve the lattice into three regions:
\[
\Omega_1(s) := B_{\sfrac{|s|}{2}}(0)\cap \La,\quad \mathcal{A}(s) := \left(B_{\sfrac{3|s|}{2}}(0)\setminus B_{\sfrac{|s|}{2}}(0)\right) \cap \La,\quad\Omega_2(s) := \left(\R^2 \setminus B_{\sfrac{3|s|}{2}}(0)\right)\cap \La.
\]
\begin{figure}[!htbp]
\centering
\includegraphics[width=0.65\linewidth]{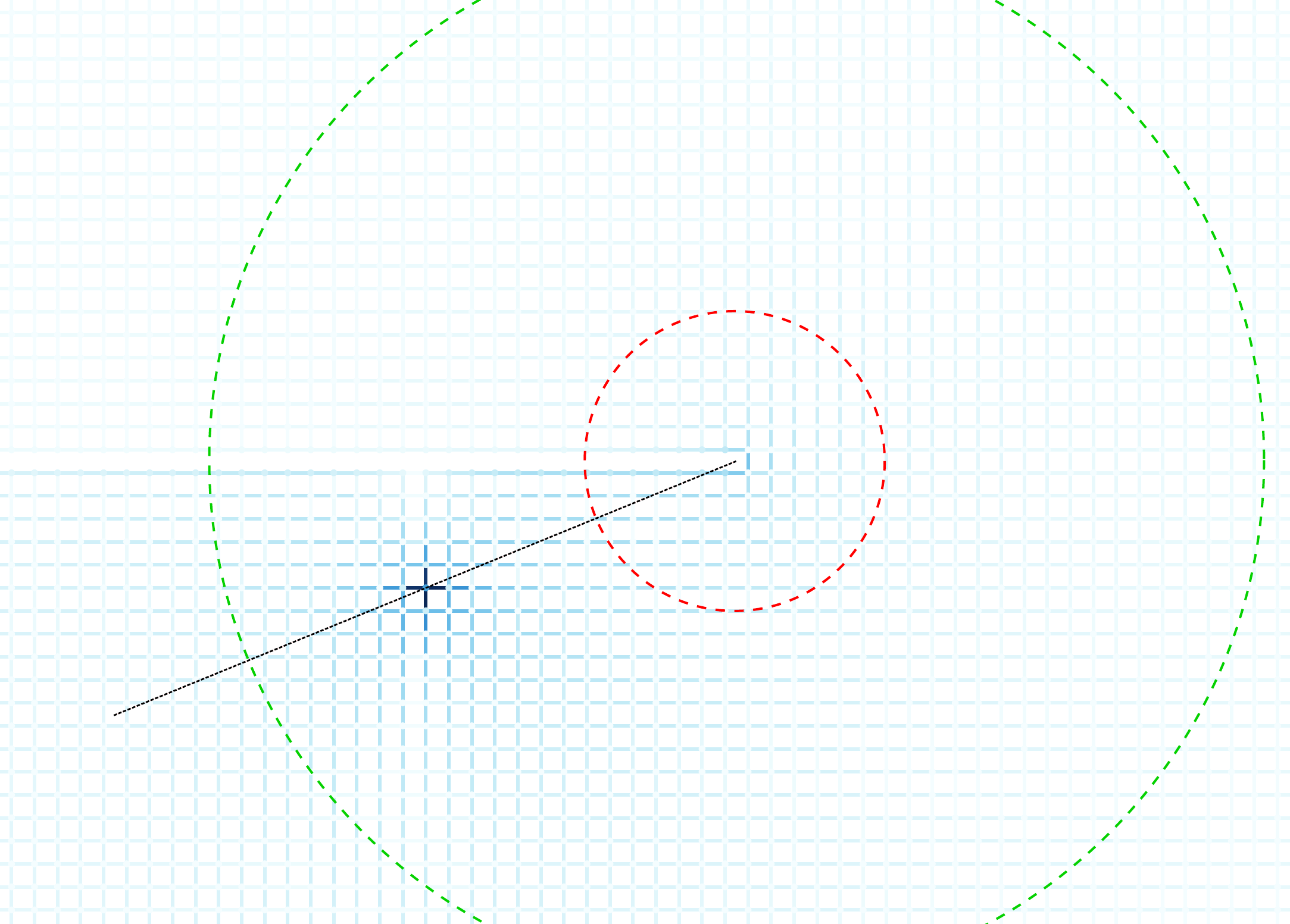}
\caption{The lattice with point-source $s$ depicted in dark blue and $\Omega_1(s)$ being the inner ball with red boundary, $\Omega_2(s)$ the outer region with green boundary and $\mathcal{A}(s)$ the annulus in-between.} 
\label{fig:new}
\end{figure}

In the following we will extensively use the fact that locally the defective lattice does not differ from a homogeneous lattice and thus the result from the spatially homogeneous setup apply, as long as we introduce suitable cut-offs. The general idea behind the cut-off function $\eta\,\colon\,\R^2 \to \R$ to be used throughout is as follows. We define it as $\eta(x) := \hat{\eta}(\sfrac{|x-\hat{x}|}{R})$, where $\hat{\eta}\,\colon\,\R \to \R$ is such that $\hat{\eta}(x) =  1$ for $x\in[0,c_1]$, $\eta(x) = 0$ for $x > c_2$, and smooth and decreasing inbetween. As a result $D\eta$ will only be non-zero on an annulus that scales like $R$. It is also clear, by Taylor expansion, that $|D^j \eta(x)| \lesssim R^{-j}$. The radius $R$, the lattice point $\hat{x}$, and constants $c_1 < c_2$ will be chosen as needed.

Finally, we also recall the existence and the decay of the homogeneous lattice Green's function $\G^{\rm hom}$ corresponding to the homogenous hessian operator $\tilde{H}$:
\[
\tilde{H}u(m) := \divo \tilde{D}u(m),	
\]
where $\tilde{D}u(m) := \left(D_{\rho}u(m)\right)_{\rho \in\Rc}$, i.e. we always use full stencils.  It is proven in \cite{EOS2016} in a much more general setup that there exists $\G^{\rm hom}\,\colon \, \La \to \R$ such that
\[
\tilde{H}\G^{\rm hom}(m-l) = \delta(m,l)\quad\forall m,l\in\La
\]
and
\begin{equation}\label{Ghomdecay}
|D^j\G^{\rm hom}(m-l)| \lesssim (1+|m-l|^{j})^{-1}.	
\end{equation}
With these tools in hand we can gain preliminary insight into the decay behaviour of $\bar{\G}$, however the appearance of the cut-off function restricts us to a suboptimal result. We proceed in steps, starting with the following. 
\begin{lemma}\label{D1-estt}
If $l \in \La\setminus\Omega_1(s)$ and  $\tau \in \Rc(l)$, then 
\[
|\Dm{\tau}\bar{\G}(l,s)| \lesssim (1+|\omega_l||\omega^-_{ls}|)^{-1}.
\]
\end{lemma}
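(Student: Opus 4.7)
The plan is to exploit the Euler--Lagrange equation for the corrector, which in pointwise form reads $H\bar{\G}(\cdot, s) = \delta_{\cdot s} - H\hat{\G}(\cdot, s)$, together with the decay \eqref{Ghomdecay} of the homogeneous lattice Green's function $\G^{\rm hom}$. Since $l \in \La \setminus \Omega_1(s)$ guarantees $|l| \geq |s|/2$, the point $l$ is comparable to $s$ in distance from the origin, and the target bound $(1+|\omega_l||\omega^-_{ls}|)^{-1}$ reduces, in the unfolded $\omega$-coordinates, to the standard first-derivative decay of the planar lattice Green's function. This strongly suggests a localisation-and-representation strategy around $l$.

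Concretely, I would pick a cut-off $\eta$ of the type introduced just before the lemma, centred at $l$ with radius $R$ of order $|l-s|$ (or a small multiple of $|l|$ when $|l|\sim|s|$), so that $\eta(l)=1$ while $\supp\eta$ stays clear of both the crack tip and the source point $s$. On such a support the defective Hessian $H$ coincides with the homogeneous operator $\tilde{H}$, so convolving $\eta\,\bar{\G}(\cdot,s)$ with $\G^{\rm hom}$ gives the representation
\[
\Dm{\tau}\bar{\G}(l,s) \;=\; \sum_{m\in\La}\Dm{\tau}\G^{\rm hom}(l-m)\,\tilde{H}\bigl(\eta\,\bar{\G}(\cdot,s)\bigr)(m).
\]
Splitting $\tilde{H}(\eta u) = \eta\,\tilde{H}u + [\tilde{H},\eta]u$, the first piece inherits $H\bar{\G}(\cdot,s)=-H\hat{\G}(\cdot,s)$ on $\supp\eta$ (since $s\notin\supp\eta$) and is controlled by the pointwise bounds of Lemma \ref{D1D2hatG}, while the commutator is supported on the annulus where $D\eta\neq 0$ with $|D\eta|\lesssim R^{-1}$, and is handled via Cauchy--Schwarz using the $\Hcc$-bound on $\bar{\G}(\cdot,s)$ supplied by the Lax--Milgram argument of Lemma \ref{G-side-lemma} and the continuity of $\delta\tilde{\E}_1(0)$ from Proposition \ref{EG-well-def}. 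Optimising $R$ against the decay \eqref{Ghomdecay} of $\Dm{}\G^{\rm hom}$ yields the asserted estimate in the bulk regime.

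The main obstacle is the case in which $l$ lies close to the crack line $\Gamma$: no cut-off about $l$ can then keep $\supp\eta$ disjoint from $\Gamma$, so $H\neq\tilde{H}$ on the support and $\G^{\rm hom}$ is not directly available. To handle this I would invoke the local mapping onto the discrete Riemann surface associated to the complex square root $\omega$, discussed in the preamble to this section: under this change of coordinates the cracked lattice near $l$ is pushed forward to a locally homogeneous half-space lattice (cf.\ Figure \ref{fig:distorted_lattice}), on which a half-space analogue of $\G^{\rm hom}$ and the same cut-off argument apply. Distances in the resulting homogeneous estimate are measured as $|\omega_l-\omega_m|$, and invoking \eqref{csqrt-id} together with $|\omega_l|\gtrsim|\omega_s|$ transports the bound back to the $(1+|\omega_l||\omega^-_{ls}|)^{-1}$ form claimed. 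This geometric unfolding is precisely the step that makes the argument inherently two-dimensional and whose delicacy motivates the bootstrap that follows in the subsequent lemmas.
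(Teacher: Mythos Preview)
Your strategy is essentially that of the paper: localise with a cut-off around $l$, represent via $\G^{\rm hom}$, split into an ``equation'' piece and a commutator piece, estimate the commutator by Cauchy--Schwarz against $\|D_1\bar{\G}(\cdot,s)\|_{\ell^2}$, and for $l$ near $\Gamma$ pass to the double-cover to restore homogeneity. Two points of imprecision are worth flagging. First, the correct scale for the cut-off is $R=|\omega_l|\,|\omega^-_{ls}|$, not $|l-s|$; these differ precisely when $l$ and $s$ lie on opposite sides of the crack (where $|\omega^+_{ls}|\ll|\omega_l|$, cf.\ \eqref{csqrt-id}), and with $R\sim|l-s|$ the commutator bound would be too weak there. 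With $R=|\omega_l|\,|\omega^-_{ls}|$ one can no longer always exclude $s$ from $\supp\eta$, and the paper does not try to: it uses the variational identity $S_1=-\langle\delta\tilde{\E}_1(0),v\rangle$ with $v=D_\tau\G^{\rm hom}(\cdot-l)\eta$, which absorbs the $\delta_{ms}$ term automatically and feeds directly into the already-established bounds $I_1,\dots,I_4$ of Proposition~\ref{EG-well-def}. Second, the near-crack step is not a push-forward to a half-space with a half-space Green's function; the paper instead builds the two-sheeted cover $\mathcal{M}$ by reflection, extends $\bar{\G}$ and $\hat{\G}$ symmetrically so that the new cross-$\Gamma$ bonds are null, and then applies the \emph{same} full-space $\G^{\rm hom}$ on the reflected picture. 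This is the step that is specific to nearest-neighbour interactions on the square lattice (not merely to two dimensions), and it is what makes the boundary terms $I_3(v_+),I_4(v_+)$ reappear and require separate estimation.
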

\begin{proof}
Due to the spatial restriction on $l$, we can always choose $\hat{x} = l$, $R = |\omega(l)||\omega(l)-\omega(s)|$ with $c_1$ and $c_2$ such that the support of the cut-off function $\eta$ does not reach the origin, e.g.  $c_1 = \frac{1}{12}$, $c_2 = \frac{1}{6}$. This is true because $|\omega(l)| = |l|^{1/2}$ and trivially $|\omega(l)-\omega(s)| \leq |\omega(l)| + |\omega(s)| \leq (1+\sqrt{2})|\omega(l)|$.

{\cb We distinguish two cases and deal with them separately. The distinction is motivated by the fact that the first case concerns the region away from the crack and translates almost verbatim to vectorial models on an arbitrary Bravais lattice with a finite-range interatomic potential. On the other hand, in Case 2 we handle the near-crack region with an argument that heavily exploits the setting of a scalar anti-plane model posed on a square lattice under a nearest-neighbour pair-potential.}  \\

\paragraph{Case 1: $\supp \eta \cap \Gamma = \emptyset$.}\mbox{}\\
With the support of cut-off function not crossing the crack surface, we can directly write
\begin{align}
\Dm{\tau}\bar{\G}(l,s) &= \Dm{\tau}[\bar{\G}(l,s)\eta(l)] = \sum_{m\in\La} \tilde{H}D_{\tau}\G^{\rm hom}(m-l)]\bar{\G}(m,s)\eta(m) \nonumber \\ \label{D1-simple}&= \sum_{m\in\La}\sum_{\rho\in\Rc}D_{\rho}D_{\tau}\G^{\rm hom}(m-l)\Dm{\rho}[\bar{\G}(m,s)\eta(m)],
\end{align}
where the first equality is due to the fact that near $l$ the cut-off is just 1, the second follows from the definition of the homogeneous lattice Green's function and the fact that with the cut-off in place we effectively sum over a finite region, where the there is no disparity between $H$ and $\tilde{H}$. The last equality is just summation by parts.

In order to use the equation that $\bar{\G}$ satisfies, we need to push the cut-off onto the other term by exploiting the discrete product rule. It leads to
\[
D_{1,\tau}\bar{\G}(l,s) = S_1 + S_2,	
\]
where the first term is in the form allowing us to exploit the equation, namely
\[
S_1 = \sum_{m\in\La}\sum_{\rho\in\Rc}D_{\rho}[D_{\tau}\G^{\rm hom}(m-l)\eta(m)]\Dm{\rho}\bar{\G}(m,s)	
\]
and the second term makes sure that the right-hand side  is consistent with \eqref{D1-simple}, that is 
\begin{align*}
S_2 = \sum_{m\in\La}\sum_{\rho\in\Rc} D_{\rho}\eta(m)&\Bigg[A_{\rho}D_{\tau}\G^{\rm hom}(m-l)\Dm{\rho}\bar{\G}(m,s)\\ &+ D_{\rho}D_{\tau}\G^{\rm hom}(m-l)A_{1\rho}\bar{\G}(m,s)\Bigg].
\end{align*}
Here $A_{\rho}f(m) := \frac{1}{2}\left(f(m+\rho)+ f(m)\right)$ {\cb and $A_{1\rho}f(m,s):= \frac{1}{2}\left(f(m+\rho,s) + f(m,s)\right)$.}

We deal with both terms separately. For $S_1$ we realise that 
\begin{equation}\label{vgheta}
v(m) := D_{\tau}\G^{\rm hom}(m-l)\eta(m)
\end{equation}
is in fact compactly-supported, so is an admissible test function in the energy space $\Hcc$. In particular, it satisfies $|D_{\rho}v(m)| \lesssim |m-l|^{-2} = |\omega_{ml}^-|^{-2}|\omega_{ml}^+|^{-2}$ (relation established in \eqref{csqrt-id}) and for $m\in B_{c_2R}(l)$ (which is equal to $\supp \eta$),  we trivially have that $|\omega_{ml}^-| \leq |\omega_{ml}^+|$ and $|\omega_m| \leq |\omega_{ml}^+|$, thus, we can say that either $|m-l|^{-2} \lesssim |\omega_{ml}^-|^{-4}$ or $|m-l|^{-2} \lesssim |\omega_{ml}^-|^{-2}|\omega_m|^{-2}$. Exploiting the fact that $\bar{\G}$ satisfies \eqref{G_min_eq} we conclude that 
\[
|S_1| = \left|-\<{\delta \tilde{E}_1(0)}{v}\right| \lesssim \sum_{i=1}^4 I_i(v),
\]
where $I_1,\dots,I_4$ as in \eqref{I12} and \eqref{I34}.

We look at each term separately and begin by noting that
\[
I_1(v) \lesssim \sum_{m \in B_{c_2R}(l)}(1+|\omega_m|^{5}|\omega^-_{ms}|)^{-1}(1+|\omega^-_{ml}|^{2}|\omega_m|^{2})^{-1} =: \sum_{m \in B_{c_2R}(l)}f_1(m) 
\]
and observe that away from the potential sharp spikes at $m=l$ and $m=s$ we can bound this series by the corresponding integral, that is we can say
\[
I_1(v) \lesssim f_1(l) + f_1(s)\mathbbm{1}_{B_{c_2R}(l)}(s) + \int_{D_{R}(l)}f_1(x)dx,
\]
where $D_{R}(l):=B_{c_2R}(l)\setminus (B_1(l)\cup B_1(s))$. The indicator function $\mathbbm{1}$ covers cases when $s\not\in B_{c_2R}(l)$. Clearly  
\[
f_1(l) = (1+|\omega_l|^{5}|\omega_{ls}^-|)^{-1},
\]
whereas $f_1(s)\mathbbm{1}_{B_{c_2R}(l)}(s) \neq 0$ only if $s \in B_{c_2R}(l)$, but then $|s| \sim |l|$, which implies 
\begin{equation}\label{s-sim-l}
f_1(s)\mathbbm{1}_{B_{c_2R}(l)}(s) \lesssim (1+|\omega_l|^2|\omega_{ls}^-|^2)^{-1}.
\end{equation}
For the integral term we introduce a change of variables $\xi = \omega(m)$, and set $\gamma := \omega_s$, $\zeta:= \omega_l$, leading to $dm = |\xi|^2d\xi$. As a result, we have
\[
\int_{D_{R}(l)}f_1(x)dx = \int_{\omega(D_{R}(l))}\frac{|\xi|^{2}}{(1+|\xi|^{5}|\xi -\gamma|)(1+|\xi-\zeta|^{4})} d\xi =: \int_{\omega(D_{R}(l))}\tilde{f}_1(\xi)d\xi.
\]
Carving the region of integration into 
\begin{equation}\label{regions1}
\Omega_{\gamma} := B_{\frac{|\gamma-\zeta|}{2}}(\gamma) \cap \omega(D_{R}(l)),\quad \Omega_{\zeta} := B_{\frac{|\gamma-\zeta|}{2}}(\zeta)\cap \omega(D_{R}(l))\quad\text{and}\quad \Omega' := \omega(D_{R}(l)) \setminus (\Omega_{\gamma} \cup \Omega_{\zeta})
\end{equation}
and noting that depending on where $l$ and $s$ are, some of them could be empty, we can estimate the integral as follows. First we notice the following spatial relations
\begin{equation}\label{space-rel1}
\xi \in \omega(D_{R}(l)) \implies |\xi| \sim |\zeta|,\quad \xi \in \Omega_{\gamma} \implies |\xi-\zeta| \gtrsim |\zeta-\gamma|\,\text{ and } \xi \in \Omega_{\zeta} \implies |\xi-\gamma| \gtrsim |\zeta-\gamma|.
\end{equation}
Thus{\cb
\begin{align*}
\int_{\Omega_{\gamma}} \tilde{f}_1 d\xi &\lesssim \frac{|\zeta|^2}{1+|\zeta-\gamma|^4} \int_{\Omega_{\gamma}}\frac{1}{1 + |\zeta|^5|\xi-\gamma|}d\xi \lesssim \frac{|\zeta|^2}{1+|\zeta-\gamma|^4}\int_{{\cb \frac{1}{|\zeta|}}}^{\frac{|\zeta-\gamma|}{2}} \frac{r}{1+|\zeta|^5 r}dr \\
&\lesssim (1+|\zeta|^3|\zeta-\gamma|^3)^{-1},
\end{align*}
where the lower limit of integration in the third integral follows from the fact that we exclude a unit ball around $|l|$ in the original domain. The same reasoning was used to estimate \eqref{fomegazeta1}.}
Likewise,
\begin{equation}\label{int-est1}
\int_{\Omega_{\zeta}} \tilde{f}_1 d\xi \lesssim \frac{|\zeta|^2}{1+|\zeta|^5|\zeta-\gamma|} \int_{\Omega_{\zeta}}\frac{1}{1 +|\zeta-\gamma|^4}d\xi \lesssim (1+|\zeta|^3|\zeta-\gamma|)^{-1}
\end{equation}
and
\[
\int_{\Omega'} \tilde{f}_1 d\xi \lesssim \frac{1}{(1+|\zeta|^5|\zeta-\gamma|)(1+|\zeta-\gamma|^4)} \int_{\Omega'}|\xi|^2d\xi \lesssim (1+|\zeta|^3|\zeta-\gamma|^{3})^{-1},
\]
where the final passage relies on the fact that we can map back to $B_{c_2R}(l)$ and have a volume term that scales like $R^2 = |\zeta|^2|\zeta-\gamma|^2$. 

For $I_2(v)$ {\cb(recalling its definition from \eqref{I12})}, similarly,
\[
I_2(v) \lesssim \sum_{m \in B_{c_2R}(l)}(1+|\omega_m|^{3}|\omega^-_{ms}|)^{-3}(1+|\omega^-_{ml}|^{2}|\omega_m|^{2})^{-1} =: \sum_{m \in B_{c_2R}(l)}f_2(m)
\]
and hence
\[
I_2(v) \lesssim f_2(l) + f_2(s)\mathbbm{1}_{B_{c_2R}(l)}(s) + \int_{D_{R}(l)}f_2(x)dx.
\]
We observe that, due to the same reasoning as in \eqref{s-sim-l}, we have 
\begin{equation}\label{int-est2}
f_2(l) = (1+|\omega_l|^{3}|\omega_{ls}^-|^3)^{-1}\quad\text{and}\quad f_2(s)\mathbbm{1}_{B_{c_2R}(l)}(s) \lesssim (1+|\omega_l|^2|\omega_{ls}^-|^2)^{-1}.
\end{equation}
Furthermore, 
\[
\int_{D_{R}(l)}f_2(x)dx, = \int_{\omega(D_{R}(l))}\frac{|\xi|^{2}}{(1+|\xi|^{3}|\xi -\gamma|^{3})(1+|\xi|^2|\xi-\zeta|^{2})} d\xi =: \int_{D_R(l))}\tilde{f}_2(\xi)d\xi,
\]
with estimates, again arising from the spatial relations established in \eqref{space-rel1},
\begin{align}
\int_{\Omega_{\gamma}} \tilde{f}_2 d\xi &\lesssim \frac{|\zeta|^2}{1+|\zeta|^2|\zeta-\gamma|^2} \int_{\Omega_{\gamma}}\frac{1}{1 + |\zeta|^3|\xi-\gamma|^3}d\xi \nonumber\\  &\lesssim \frac{|\zeta|^2}{1+|\zeta|^{2}|\zeta-\gamma|^2}\int_{{\cb \frac{1}{|\gamma|}}}^{\frac{|\zeta-\gamma|}{2}} \frac{r}{1+|\zeta|^3 r^3}dr \lesssim (1+|\zeta|^{\cb 2}|\zeta-\gamma|^2)^{-1},\label{int-est3}
\end{align}
\[
\int_{\Omega_{\zeta}} \tilde{f}_2 d\xi \lesssim \frac{|\zeta|^2}{1+|\zeta|^3|\zeta-\gamma|^3} \int_{\Omega_{\zeta}}\frac{1}{1 +|\zeta|^{2}|\xi-\zeta|^2}d\xi \lesssim (1+|\zeta|^3|\zeta-\gamma|^3)^{-1}\log|\zeta-\gamma|
\]
and
\[
\int_{\Omega'} \tilde{f}_2 d\xi \lesssim \frac{1}{(1+|\zeta|^3|\zeta-\gamma|^3)(1+|\zeta|^2|\zeta-\gamma|^2)} \int_{\Omega'}|\xi|^2d\xi \lesssim (1+|\zeta|^3|\zeta-\gamma|^{3})^{-1},
\]
Finally, since for now we assume that $\supp \eta \cap \Gamma = \emptyset$, we trivially have that $I_3(v) = I_4(v) = 0$. It can be thus concluded that $S_1 \lesssim (1+|\zeta|^2|\zeta-\gamma|)^{-1} = (1+|\omega_l|^2|\omega_{ls}^-|)^{-1}$, with the exponents taken from combining {\cb\eqref{int-est1}, \eqref{int-est2} \& \eqref{int-est3}}. 

For $S_2$ we realise that $D_{\rho}\eta(m)$ is only non-zero for $m \in \mathcal{A}_{R} := B_{c_2 R}(l) \setminus B_{c_1 R}(l)$, which corresponds to a volume term that scales like $R^2 = |\omega_l|^{2}|\omega^-_{ls}|^{2}$. It also in particular implies that $|m-l|$ and $|\omega_l||\omega^-_{ls}|$ are comparable. We can thus use Cauchy-Schwarz inequality and the decay of each term to conclude that
\begin{align*}
|S_2| &\lesssim \left(|\omega_l|^{(-4+2)} |\omega^-_{ls}|^{(-4+2)} \right)^{1/2}\|D\bar{\G}(\cdot,s)\|_{\ell^2} + \left(|\omega_l|^{(-6+2)}|\omega^-_{ls}|^{(-6+2)}\right)^{1/2}\|A\bar{\G}(\cdot,s)\|_{\ell^2(\mathcal{A}_R)}\\
&\lesssim (1+|\omega_l||\omega^-_{ls}|)^{-1},
\end{align*}
where the last inequality is due to $\|A\bar{\G}(\cdot,s)\|_{\ell^2(\mathcal{A}_R)} \lesssim R\|D\bar{\G}(\cdot,s)\|_{\ell^2}$, a result that immediately follows from \cite[Lemma 7.1]{EOS2016pp}.\\

\paragraph{Case 2: $\supp \eta \cap \Gamma \neq \emptyset$.}\mbox{}\\
To cover this more problematic case we resort to a technical trick at present only seems to be applicable to a square lattice with NN interactions. We begin by constructing a discrete equivalent of a Riemann surface corresponding to the complex square root map, namely we define
\begin{equation}\label{manifoldM}
\mathcal{M}:= \Z^2 \times \{-1, 1 \},
\end{equation}
that is we look at two copies of the square lattice and so $k \in \mathcal{M}$ is such that $k = (k_l,k_b)$, where $k_l$ corresponds to a lattice site and $k_b$ determines whether we are on the positive branch or the negative branch (as with the complex square root mapping). For $\uc \,\colon\,\Mc\to \R$ and a lattice direction $\rho \in \Rc$, we also define the notion of a finite difference $\Dc_{\rho}$ and of a swapping finite difference $\Dc^s_{\rho}$ as
\[
\Dc_{\rho}\uc(k) := \uc(k_l + \rho,k_b) - \uc(k_l,k_b)\quad\text{and}\quad \Dc^s_{\rho}\uc(k) := \uc(k_l + \rho,-k_b) - \uc(k_l,k_b).
\]
Since $k_b \in \{-1,1\}$, we note that in the latter case we simply jump from one branch to another. The corresponding manifold discrete gradient operator as $\Dc\uc(k) \in \R^{\Rc}$ can then be defined as
\begin{equation}\label{dgrad-manifold}
\left(\Dc\uc(k)\right)_{\rho} = \begin{cases}
\Dc_{\rho}\uc(k)\quad &\text{if }\rho \in \Rc(k_l),\\
\Dc^s_{\rho}\uc(k)\quad&\text{if }\rho \not\in\Rc(k_l).
\end{cases}
\end{equation}
Comparing this with the definition of the discrete gradient in \eqref{dgrad}, we observe that the they only differ at lattice points on $\Gamma_+ \cup \Gamma_-$. This underlines the reasoning behind the construction - we take two copies of the lattice and glue them together at the cut, thus ensuring that in fact we always work with full stencils. Consequently, we can again locally use the homogeneous lattice Green's function $\G^{\rm hom}$, as long as we avoid the origin of $\Mc$. 

We further define the manifold equivalent of \eqref{Hcc} as
\begin{equation}\label{Hcc-manifold}
\Hcc_{\Mc} := \left\{\uc\,\colon\,\Mc \to \R \;|\; \Dc\uc \in \ell^2\;\text{and}\; \uc(\hat{x}, \pm 1) = 0 \right\}.
\end{equation}

Likewise, we can extend the notion of the predictor $\hat{\G}$ defined in \eqref{Gpred} to the manifold setup by defining $\hat{\G}_{\Mc}\,\colon\,\Mc\times\Z^2 \to \R$ as 
\[
\hat{\G}_{\Mc}(k,s) := \begin{cases}
\hat{\G}(k_l,s)\quad &\text{if }k_b = 1,\\
\hat{\G}((k_{l_1},-k_{l_2}),s)\quad &\text{if }k_b = -1,
\end{cases}
\]
that is, for the negative branch, we reflect the original predictor along $x$-axis. Note that the manifold finite difference operators are always applied with respect to the first variable. Finally, we can also consider a manifold equivalent of the energy-difference $\tilde{\E}_1$ defined in \eqref{energyG1}, which we define as
\begin{align}\label{energyG1-manifold}
\tilde{\E}_{\Mc}(\G_{\Mc})  = \sum_{k \in \Mc}\Bigg[&\frac{1}{2}\Big(\sum_{\rho \in \Rc}(\Dc\hat{\G}_{\Mc}(k,s))_{\rho} + (\Dc\G_{\Mc}(k,s))_{\rho})^{2} - (\Dc\hat{\G}_{\Mc}(k,s))_{\rho}^{2}\Big)\nonumber\\ 
&- \left(\delta(k,(s,1))+\delta(k,((s_1,-s_2),-1))\right)\left(\hat{\G}_{\Mc}(m,s) +\G_{\Mc}(m,s)\right)\Bigg].
\end{align}
{\cb It follows immediately from Proposition \ref{EG-well-def} that $\tilde{\E}_{\Mc}$ is well-defined over $\Hcc_{\Mc}$ and smooth.} Thus we can again look at the problem of finding a stationary point $\bar{\G}_{\Mc}$ which satisfies
\begin{equation}\label{G_min_eq-manifold}
\<{\delta \tilde{E}_{\Mc}(\bar{\G}_{\Mc})}{\uc} = 0\quad\forall \uc \in \Hcc_{\Mc} ,
\end{equation}
where
\begin{align*}
\<{\delta \tilde{E}_{\Mc}(\bar{\G}_{\Mc})}{\uc} = \sum_{k \in \Mc} \Bigg[\sum_{\rho \in \Rc} &\left(\Dc_{\rho}\hat{\G}_{\Mc}(k,s) + \Dc_{\rho}\bar{\G}_{\Mc}(k,s)\right)\Dc_{\rho}\uc(m)\\
&-\left(\delta(k,(s,1)) + \delta(k,((s_1,-s_2),-1))\right)\uc(m) \Bigg].
\end{align*}
Crucially, the way we define $\hat{\G}_{\Mc}$ implies that the contribution from the new bonds across $\Gamma$ is null, as e.g. for $l \in \Gamma_-$ we have $l+e_2 = (l_1,-l_2)$ and thus $\Dc^s_{e_2}\hat{\G}_{\Mc}((l,1),s) = 0$. This in turn tells us the solution to \eqref{G_min_eq-manifold} is given by
\[
\bar{\G}_{\Mc}(k,s) := \begin{cases}
\bar{\G}(k_l,s)\quad &\text{if }k_b = 1,\\
\bar{\G}((k_{l_1},-k_{l_2}),s)\quad &\text{if }k_b = -1,
\end{cases}
\] 
Thus to obtain the decay estimate for $|\Dm{\tau}\bar{\G}(l,s)|$, we proceed as follows. Without loss of generality we can assume that $l_2 < 0$ and accordingly define a reflected version of $\G = \hat{\G} + \bar{\G}$ as $\G_{\rm ref}\,\colon\,\La \to \R$ with 
\[
\G_{\rm ref}(m,s) := \begin{cases} \G(m,s)\quad &\text{if }m_2 < 0,\\
\G((m_1,-m_2,s)\quad &\text{if }m_2 > 0.
\end{cases}
\]
Hence, we can write
\begin{align*}
\Dm{\tau}\bar{\G}(l,s) &= \Dm{\tau}[\bar{\G}_{\rm ref}(l,s)\eta(l)] = \sum_{m\in\La} \tilde{H}D_{\tau}\G^{\rm hom}(m-l)]\bar{\G}_{\rm ref}(m,s)\eta(m) \\ 
&= \sum_{m\in\La}\sum_{\rho\in\Rc}D_{\rho}D_{\tau}\G^{\rm hom}(m-l)\Dm{\rho}[\bar{\G}_{\rm ref}(m,s)\eta(m)] = S_1 + S_2,
\end{align*}
with
\[
S_1 = \sum_{m\in\La}\sum_{\rho\in\Rc}D_{\rho}[D_{\tau}\G^{\rm hom}(m-l)\eta(m)]\Dm{\rho}\bar{\G}_{\rm ref}(m,s),
\]
and
\begin{align*}
S_2 = \sum_{m\in\La}\sum_{\rho\in\Rc} D_{\rho}\eta(m)&\Bigg[A_{\rho}D_{\tau}\G^{\rm hom}(m-l)\Dm{\rho}\bar{\G}_{\rm ref}(m,s)\\ &+ D_{\rho}D_{\tau}\G^{\rm hom}(m-l)A_{1\rho}\bar{\G}_{\rm ref}(m,s)\Bigg].
\end{align*}

Noting that the nullity of bonds across the $x$-axis of $\bar{\G}_{\rm ref}$ due to reflection ensures that  $\|D_1\bar{\G}_{\rm ref}(\cdot,s)\|_{\ell^2} < \infty$, the argument for $S_2$ is unaffected, thus we can immediately conclude that $S_2 \lesssim (1+|\omega_l||\omega_{ls}^-|)^{-1}$. For $S_1$ we recall the definition of $v$ in \eqref{vgheta} and define its manifold equivalent $v_{\Mc}\,\colon\,\Mc \to \R$ by
\[
v_{\Mc}(k) := \begin{cases} v(k_l)\quad&\text{if } (k_{l_2} < 0\ \land k_b = 1) \lor (k_{l_2} > 0 \land k_b = -1),\\
0\quad&\text{otherwise}.
\end{cases}
\]
As a result we have
\[
S_1 = \sum_{k \in \Mc}\Dc\bar{\G}_{\Mc}(k,s)\cdot\Dc v_{\Mc}(k)
\]
and thus we can exploit \eqref{G_min_eq-manifold} to conclude that
\[
S_1 = \sum_{k \in \Mc} -\Dc\hat{\G}_{\Mc}(k,s)\cdot \Dc v_{\Mc}(k) + v_{\Mc}((s,1)) + v_{\Mc}((s,-1)).
\]
We can now introduce
\[
\hat{\G}_+(m,s) := \begin{cases} \hat{\G}_{\Mc}((m,1),s)\;&\text{if } m_2 < 0, \\ 
0\;&\text{if } m_2 > 0,\end{cases}\quad \hat{\G}_-(m,s) := \begin{cases} \hat{\G}_{\Mc}((m,-1),s)\;&\text{if } m_2 > 0, \\ 
0\;&\text{if } m_2 < 0,\end{cases}
\]
\begin{equation}\label{vplus}
v_+(m) := \begin{cases} v(m)\;&\text{if }m_2 < 0, \\ 0\;&\text{if }m_2 > 0, \end{cases}\quad v_-(m) := \begin{cases} v(m)\;&\text{if }m_2 > 0, \\ 0\;&\text{if }m_2 < 0. \end{cases}
\end{equation}
and are able to conclude that in fact 
\begin{align*}
S_1 &= \left(\sum_{m \in \La}-D_1\hat{\G}_+(m,s)\cdot Dv_+(m) + v_+(s)\right) + \left(\sum_{m \in \La}-D_1\hat{\G}_-(m,s)\cdot Dv_-(m) + v_-((s_1,-s_2))\right)\\
&=: S_+ + S_-,
\end{align*}
i.e. we look at the positive and negative branch separately. The results hence follow from the fact that due to reflection we always have
\[
|D_1\hat{\G}_-(m,s)\cdot Dv_-(m)| \leq |D_1\hat{\G}_+((m_1,-m_2),s) \cdot Dv_+((m_1,-m_2))|,
\]
and consequently any estimate that applies to $|S_+|$ equally applies to $|S_-|$. Furthermore, $|S_+|$ can be estimated as in Case 1, except now $I_3(v_+),I_4(v_+) \neq 0$, but we can estimate them as follows.

With $\Gamma_l:= B_{c_2R}(l) \cap \Gamma$, we first note that
\[
I_3(v_+) \lesssim \sum_{m\in\Gamma_l}(1+|\omega_m|^3|\omega_{ms}^-|)^{-1}(1+|\omega_{ml}^-|^2|\omega_{ml}^+|^2)^{-1} =: \sum_{m\in\Gamma_l} f_3(m)
\]
and again argue that
\[
I_3(v_+) \lesssim f_3(l)\mathbbm{1}_{\Gamma_l}(l) + f_3(s)\mathbbm{1}_{\Gamma_l}(s) + \int_{\tilde{\Gamma}_{l}}f_3(x)dx,
\]
where $\tilde{\Gamma}_{l}:=\Gamma_l \setminus (B_1(l)\cup B_1(s))$. The indicator function $\mathbbm{1}$ is there to cover cases when $l,s\not\in \Gamma_l$. It is clear that 
\[
f_3(l) = (1+|\omega_l|^{3}|\omega_{ls}^-|)^{-1}\quad\text{and}\quad f_3(s)\mathbbm{1}_{\Gamma_l}(s) \lesssim (1+|\omega_l|^2|\omega_{ls}^-|^2)^{-1},
\]
where in particular for the second inequality we argue as in \eqref{s-sim-l}. 
For the integral term we again introduce a change of variables $\xi = \omega(m)$ and due to $\omega(\Gamma_l)$ being a one-dimensional line, we can conclude that $dm \lesssim |\xi|d\xi$. As a result, we have
\[
\int_{\tilde{\Gamma}_{l}}f_3(x)dx =  \int_{\omega(\tilde{\Gamma}_{l})}\frac{|\xi|}{(1+|\xi|^{3}|\xi-\gamma|)(1+|\xi-\zeta|^2||\xi+\zeta|^2)}d\xi =: \int_{\omega(\tilde{\Gamma}_{l})}\tilde{f}_3(\xi)d\xi.
\]
Mimicking the approach for $I_1(v)$ and $I_2(v)$, we carve the region of integration into 
\begin{equation}\label{regions2}
\Gamma_{\gamma} := B_{\frac{|\gamma-\zeta|}{2}}(\gamma) \cap \omega(\tilde{\Gamma}_l),\quad \Gamma_{\zeta} := B_{\frac{|\gamma-\zeta|}{2}}(\zeta)\cap \omega(\tilde{\Gamma}_l)\quad\text{and}\quad \Gamma_l' := \omega(\tilde{\Gamma}_l) \setminus (\Gamma_{\gamma} \cup \Gamma_{\zeta})
\end{equation}
and observe that spatial relations in \eqref{space-rel1} remain valid. Hence, 
\[
\int_{\Gamma_{\gamma}}\tilde{f}_3d\xi \lesssim \frac{|\zeta|}{1+|\zeta|^2|\zeta-\gamma|^2}\int_{\Gamma_{\gamma}}\frac{1}{1+|\zeta|^3|\xi-\gamma|}d\xi \lesssim (1+|\zeta|^4|\zeta-\gamma|^2)^{-1}(\log|\zeta| + \log|\zeta-\gamma|),
\]
\[
\int_{\Gamma_{\zeta}}\tilde{f}_3d\xi \lesssim \frac{|\zeta|}{1+|\zeta|^3|\zeta-\gamma|}\int_{\Gamma_{\zeta}}\frac{1}{1+|\zeta|^2|\xi-\zeta|^2}d\xi \lesssim (1+|\zeta|^4|\zeta-\gamma|)^{-1},
\]
\[
\int_{\Gamma_{l}'}\tilde{f}_3d\xi \lesssim \frac{1}{(1+|\zeta|^2|\zeta-\gamma|^2)(1+|\zeta|^3|\zeta-\gamma|)}\int_{\Gamma_{\gamma}}|\xi|d\xi \lesssim (1+|\zeta|^4|\zeta-\gamma|^2)^{-1},
\]
thus allowing us to conclude that
\[
I_3(v_+) \lesssim (1+|\zeta|^2|\zeta-\gamma|)^{-1} = (1+|\omega_l|^2|\omega_{ls}^-|)^{-1}.
\]
The exact same argument can also be employed to establish that 
\[
I_4(v_+) \lesssim (1+|\zeta|^2|\zeta-\gamma|)^{-1} = (1+|\omega_l|^2|\omega_{ls}^-|)^{-1},
\]
which implies $|S_1| \lesssim (1+|\omega_l|^2|\omega_{ls}^-|)^{-1}$. This concludes the proof. 
\end{proof}

Lemma \ref{D1-estt} sets the scene for the rest of the proof. In particular we exploit it to establish the first result for the mixed derivative of $\bar{\G}$.
\begin{lemma}\label{D1D2annulus}
If $l \in \mathcal{A}(s)$, $\tau \in \Rc(l)$, and $\lambda \in \Rc(s)$,  then 
\[
|D_{1,\tau}D_{2,\lambda}\bar{\G}(l,s)| \lesssim (1+|\omega_l||\omega_s||\omega^-_{ls}|^{2})^{-1}.
\]
\end{lemma}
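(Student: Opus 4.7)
The plan is to adapt the proof of Lemma \ref{D1-estt} with the auxiliary lattice function $w(m) := D_{2,\lambda}\tilde{\G}(m,s) \in \Hcc$ playing the role of $\bar{\G}(\cdot,s)$. Since $\bar{\G}(m,s) - \tilde{\G}(m,s) = \tilde{\G}(s,\hat{x})$ depends only on $s$, the mixed derivative reduces to $D_{1,\tau}D_{2,\lambda}\bar{\G}(l,s) = D_\tau w(l)$. Moreover, $w = \tilde{\G}(\cdot,s+\lambda) - \tilde{\G}(\cdot,s)$ is a difference of two elements of $\Hcc$, and differencing the Euler--Lagrange equation \eqref{G_min_eq} for $\tilde{\G}(\cdot,s)$ with respect to $s$ yields the variational identity
\[
\sum_m Dw(m)\cdot Dv(m) = D_\lambda v(s) - \sum_m D_1 D_{2,\lambda}\hat{\G}(m,s)\cdot Dv(m), \quad v \in \Hcc,
\]
which is the correct replacement for the equation used in the proof of Lemma \ref{D1-estt}.

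Introduce the cut-off $\eta$ centered at $l$ with radius $R := |\omega_l||\omega_{ls}^-|$. The annulus condition $l \in \mathcal{A}(s)$ gives $|\omega_l|\sim|\omega_s|$ and $R \lesssim |l|$, so $c_1,c_2$ can be chosen to keep $\supp\eta$ away from the origin, and the target bound is equivalent to $|D_\tau w(l)| \lesssim R^{-2}$. Using $\G^{\rm hom}$ and summation by parts exactly as in Lemma \ref{D1-estt}, decompose $D_\tau w(l) = S_1 + S_2$, where $S_1$ applies the variational identity above with the compactly supported test function $v(m) := D_\tau \G^{\rm hom}(m-l)\eta(m) \in \Hcc$, and $S_2$ collects the commutator terms involving $D\eta$.

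For $S_1$, the boundary contribution $D_\lambda v(s)$ is $O(R^{-2})$ by direct computation, using the decay $|DD_\tau\G^{\rm hom}(m-l)|\lesssim |m-l|^{-2}$, the scaling $|D\eta|\sim R^{-1}$, and $|s-l|\sim R$. The remaining sum is estimated using the enhanced decay $|D_1 D_{2,\lambda}\hat{\G}(m,s)| \lesssim h_1^{(a)}(m,s)$ from Lemma \ref{D1D2hatG}, which carries an extra factor $(|\omega_s||\omega_{ms}^-|)^{-1}$ compared with the $D_1\hat{\G}$ bound in Lemma \ref{D1-estt}. Decomposing into the interior and crack-surface contributions analogous to $I_1,\dots,I_4$ and performing the change of variables $\xi = \omega(m)$, $\gamma = \omega_s$, $\zeta = \omega_l$ leads to integrals split over the subregions $\Omega_\gamma,\Omega_\zeta,\Omega'$ from \eqref{regions1} (and, in the near-crack case, their one-dimensional analogs \eqref{regions2}); each is bounded by $(|\omega_l||\omega_s||\omega_{ls}^-|^2)^{-1}$ up to logarithmic factors that are absorbed into the implicit constant.

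For $S_2$, Cauchy--Schwarz together with $w \in \Hcc$, the scaling of $D\eta$ on the annulus $\mathcal{A}_R$, and the Poincar\'e-type estimate from \cite[Lemma 7.1]{EOS2016pp} yields $|S_2| \lesssim R^{-2}$. When $\supp\eta\cap\Gamma\neq\emptyset$ we repeat the manifold-doubling construction of Lemma \ref{D1-estt}, reflecting $w$ across the $x$-axis to extend it to a function on $\Mc$, replacing the variational identity by its manifold analog, and reducing to positive- and negative-branch contributions each treated as in Case 1. The main obstacle is the careful bookkeeping of the extra decay factor $(|\omega_s||\omega_{ms}^-|)^{-1}$ through all the integral estimates in $S_1$, and verifying that $\|Dw\|_{\ell^2}$, which enters via Cauchy--Schwarz in the $S_2$ commutator, is controlled independently of $s$ with sufficient tightness to preserve the sharp $R^{-2}$ bound.
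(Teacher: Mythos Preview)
Your decomposition $D_\tau w(l) = S_1 + S_2$ and the treatment of $S_1$ via the differenced variational identity follow the paper closely. The gap is in $S_2$. Repeating the Cauchy--Schwarz/Poincar\'e argument from Lemma~\ref{D1-estt} only yields
\[
|S_2| \lesssim R^{-1}\|Dw\|_{\ell^2},
\]
since on the annulus $\mathcal{A}_R$ one has $|D\eta|\sim R^{-1}$, $|A_\rho D_\tau\G^{\rm hom}(\cdot-l)|\sim R^{-1}$, $|D_\rho D_\tau\G^{\rm hom}(\cdot-l)|\sim R^{-2}$, $|\mathcal{A}_R|\sim R^2$, and $\|Aw\|_{\ell^2(\mathcal{A}_R)}\lesssim R\|Dw\|_{\ell^2}$. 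To reach the target $R^{-2}$ you would need $\|Dw\|_{\ell^2}\lesssim R^{-1}$; for $l\in\mathcal{A}(s)$ with $|\omega_{ls}^-|\sim|\omega_s|$ this forces $\|D_1 D_{2,\lambda}\tilde{\G}(\cdot,s)\|_{\ell^2}\lesssim|\omega_s|^{-2}$. That is exactly the content of Lemma~\ref{lem-norm-est1}, whose proof \emph{uses} the present lemma, so invoking it here is circular. A priori one only has $\|Dw\|_{\ell^2}=O(1)$ (the differenced forcing contains the functional $v\mapsto D_\lambda v(s)$, whose $(\Hcc)^*$-norm is $\sim 1$), so the obstacle you flag at the end is genuine and cannot be bypassed by bookkeeping alone.

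The paper's $S_2$ argument is different in kind. It uses variable symmetry $D_{2,\lambda}\bar{\G}(m,s)=D_{1,\lambda}\bar{\G}(s,m)$ and observes that for $m\in B_{c_2R}(l)$ with $l\in\mathcal{A}(s)$ one has $s\in\La\setminus\Omega_1(m)$, so the already-proven \emph{pointwise} bound of Lemma~\ref{D1-estt} gives $|D_{2,\lambda}\bar{\G}(m,s)|\lesssim(|\omega_s||\omega_{ms}^-|)^{-1}$. The first commutator term in $S_2$ is then summed by parts (pushing a derivative onto $D\eta\odot AD_\tau\G^{\rm hom}$), after which both pieces of $S_2$ involve only this pointwise decay and the resulting sums are estimated directly over $\omega(\mathcal{A}_l)$. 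This is the missing idea: the $\ell^2$-norm of $Dw$ is too blunt here, and one must feed in the first-derivative decay established in Lemma~\ref{D1-estt}. (A smaller point: your assertion $|s-l|\sim R$ is not automatic, since $|s-l|/R=|\omega_{ls}^+|/|\omega_l|$ can be small when $l$ and $s$ lie on opposite sides of the crack; in Case~1 the condition $\supp\eta\cap\Gamma=\emptyset$ largely excludes this, but the claim needs justification.)
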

\begin{proof}
Using the same cut-off function $\eta$ as in Lemma \ref{D1-estt}, we again distinguish two cases. \\

\paragraph{Case 1: $\supp \eta \cap \Gamma = \emptyset$.}\mbox{}\\
We write
\[
\Dm{\tau}\Ds{\lambda}\bar{\G}(l,s) = S_1 + S_2,
\]
where this time
\[
S_1 = \sum_{m\in\La}\sum_{\rho\in\Rc}D_{\rho}[D_{\tau}\G^{\rm hom}(m-l)\eta(m)]\Dm{\rho}\Ds{\lambda}\bar{\G}(m,s)	
\]
and 
\begin{align*}
S_2 = \sum_{m\in\La}\sum_{\rho\in\Rc} D_{\rho}\eta(m)&\Bigg[A_{\rho}D_{\tau}\G^{\rm hom}(m-l)\Dm{\rho}\Ds{\lambda}\bar{\G}(m,s)\\ &+ D_{\rho}D_{\tau}\G^{\rm hom}(m-l)A_{1,\rho}\Ds{\lambda}\bar{\G}(m,s)\Bigg].
\end{align*}
The $S_1$ part can be treated similarly to before, with the key difference being that we have an extra $s$-derivative on terms corresponding to the predictor. We thus let $v(m):= D_{\tau}\G^{\rm hom}(m-l)\eta(m)$ and  estimate
\[
|S_1| \lesssim \sum_{i=1}^4 J_i(v),
\]
where $(J_i)$ are defined as similarly $(I_i)$ in \eqref{I12}-\eqref{I34}, but with an additional derivative with respect to $s$, namely
\begin{equation}\label{J12}
J_1(v) = \sum_{b(m,\rho)\not\subset\Gamma}h^{(a)}_{3}(m)|D_{\rho}v(m)|, \quad J_2(v) = \sum_{b(m,\rho)\not\subset\Gamma}h^{(b)}_{3}(m)|D_{\rho}v(m)| 
\end{equation}
and
\begin{equation}\label{J34}
J_3(v) = \sum_{b(m,\rho)\subset\Gamma}h^{(a)}_{2}(m)|D_{\rho}v(m)|,\quad J_4(v) = \sum_{b(m,\rho)\subset\Gamma}h^{(b)}_{2}(m)|D_{\rho}v(m)|,
\end{equation}
with $h^{(a)}_{\alpha}$ and $h^{(b)}_{\alpha}$ defined in \eqref{hahb}.

Throughout we apply the same procedure as in the proof of Lemma \ref{D1-estt}, thus we omit some repetitions. {\cb We begin by recalling the decay estimate of the homogeneous Green's function $\G^{\rm hom}$ given in \eqref{Ghomdecay} and the subsequent discussion following the definition in \eqref{vgheta}, which together imply that}
\[
J_1(v) \lesssim \sum_{m \in B_{c_2R}(l)}(1+|\omega_m|^{5}|\omega_s||\omega^-_{ms}|^2)^{-1}(1+|\omega^-_{ml}|^{2}|\omega_m|^{2})^{-1} =: \sum_{m \in B_{c_2R}(l)}g_1(m) 
\]
which then leads to 
\[
J_1(v) \lesssim g_1(l) + g_1(s)\mathbbm{1}_{B_{c_2R}(l)}(s) + \int_{D_{R}(l)}g_1(x)dx.
\]
It is further true that
\[
g_1(l) = (1+|\omega_l|^{5}|\omega_s||\omega_{ls}^-|^2)^{-1}\quad\text{and}\quad g_1(s)\mathbbm{1}_{B_{c_2R}(l)}(s) \lesssim (1+|\omega_l||\omega_s||\omega_{ls}^-|^2)^{-1}.
\]
We then consider 
\[
\int_{D_{R}(l)}g_1(x)dx = \int_{\omega(D_{R}(l))}\frac{|\xi|^{2}}{(1+|\xi|^{5}|\gamma||\xi -\gamma|^2)(1+|\xi-\zeta|^{4})} d\xi =: \int_{\omega(D_{R}(l))}\tilde{g}_1(\xi)d\xi
\]
and recall the regions of integration from \eqref{regions1}. Following the same logic as in the proof of Lemma \ref{D1-estt}, we can thus conclude that
\begin{align*}
\int_{\Omega_{\gamma}} \tilde{g}_1 d\xi &\lesssim \frac{|\zeta|^2}{1+|\zeta-\gamma|^4} \int_{\Omega_{\gamma}}\frac{1}{1 + |\zeta|^5|\gamma||\xi-\gamma|^2}d\xi \lesssim \frac{|\zeta|^2}{1+|\zeta-\gamma|^4}\int_{{\cb \frac{1}{|\gamma|}}}^{\frac{|\zeta-\gamma|}{2}} \frac{r}{1+|\zeta|^5|\gamma| r^2}dr \\
&\lesssim (1+|\zeta|^3|\gamma||\zeta-\gamma|^4)^{-1}\log|\zeta-\gamma|,
\end{align*}
\[
\int_{\Omega_{\zeta}} \tilde{g}_1 d\xi \lesssim \frac{|\zeta|^2}{1+|\zeta|^5|\gamma||\zeta-\gamma|^2} \int_{\Omega_{\zeta}}\frac{1}{1 +|\zeta-\gamma|^4}d\xi \lesssim (1+|\zeta|^3|\gamma||\zeta-\gamma|^2)^{-1}
\]
and
\[
\int_{\Omega'} \hat{g}_1 d\xi \lesssim \frac{1}{(1+|\zeta|^5|\gamma||\zeta-\gamma|^2)(1+|\zeta-\gamma|^4)} \int_{\Omega'}|\xi|^2d\xi \lesssim (1+|\zeta|^3|\gamma||\zeta-\gamma|^{4})^{-1}.
\]
For $J_2(v)$, similarly,
\[
J_2(v) \lesssim \sum_{m \in B_{c_2R}(l)}{\cb(1+|\omega_m|^{3}|\omega_s||\omega^-_{ms}|^4)^{-1}}(1+|\omega^-_{ml}|^{2}|\omega_m|^{2})^{-1} =: \sum_{m \in B_{c_2R}(l)}g_2(m)
\]
and thus
\[
J_2(v) \lesssim g_2(l) + g_2(s)\mathbbm{1}_{B_{c_2R}(l)}(s) + \int_{D_{R}(l)}g_2(x)dx.
\]
We further note that  
\[
g_2(l) = (1+|\omega_l|^{3}|\omega_s||\omega_{ls}^-|^4)^{-1}\quad\text{and}\quad g_2(s)\mathbbm{1}_{B_{c_2R}(l)}(s) \lesssim (1+|\omega_l||\omega_s||\omega_{ls}^-|^2)^{-1}
\]
and
\[
\int_{D_{R}(l)}g_2(x)dx, = \int_{\omega(D_{R}(l))}\frac{|\xi|^{2}}{(1+|\xi|^{3}|\gamma||\xi -\gamma|^{4})(1+|\xi|^2|\xi-\zeta|^{2})} d\xi =: \int_{D_R(l))}\tilde{g}_2(\xi)d\xi,
\]
with estimates 
\begin{align*}
\int_{\Omega_{\gamma}} \tilde{g}_2 d\xi &\lesssim \frac{|\zeta|^2}{1+|\zeta|^2|\zeta-\gamma|^2} \int_{\Omega_{\gamma}}\frac{1}{1 + |\zeta|^3|\gamma||\xi-\gamma|^4}d\xi\\  &\lesssim \frac{|\zeta|^2}{1+|\zeta|^{2}|\zeta-\gamma|^2}\int_{{\cb \frac{1}{|\gamma|}}}^{\frac{|\zeta-\gamma|}{2}} \frac{r}{1+|\zeta|^3|\gamma| r^4}dr \lesssim (1+|\zeta|^3|\gamma||\zeta-\gamma|^2)^{-1},
\end{align*}
\[
\int_{\Omega_{\zeta}} \tilde{g}_2 d\xi \lesssim \frac{|\zeta|^2}{1+|\zeta|^3|\gamma||\zeta-\gamma|^4} \int_{\Omega_{\zeta}}\frac{1}{1 +|\zeta|^{2}|\xi-\zeta|^2}d\xi \lesssim (1+|\zeta|^3|\gamma||\zeta-\gamma|^4)^{-1}\log|\xi-\gamma|
\]
and
\[
\int_{\Omega'} \tilde{g}_2 d\xi \lesssim \frac{1}{(1+|\zeta|^3|\gamma||\zeta-\gamma|^4)(1+|\zeta|^2|\zeta-\gamma|^2)} \int_{\Omega'}|\xi|^2d\xi \lesssim (1+|\zeta|^3|\gamma||\zeta-\gamma|^{4})^{-1}.
\]
This establishes that
\[
|S_1| \lesssim (1+|\zeta||\gamma||\zeta-\gamma|^2)^{-1} = (1+|\omega_l||\omega_s||\omega_{ls}^-|^2)^{-1}.
\]
For $S_2$, we note that due to variable symmetry we have $\Ds{\lambda}\bar{\G}(m,s) = \Dm{\lambda}\bar{\G}(s,m)$ and since $l \in \mathcal{A}$, then $m \in B_{c_2R}(l)$ is such that
\[
|m| \leq |l| + c_2R \leq |l|(1+c_2(1+\sqrt{2}))\leq \frac{3}{2}(1+c_2(1+\sqrt{2}))|s| \implies |s| \geq \frac{2}{3(1+c_2(1+\sqrt{2}))}|m|.
\]
As a result, with $c_2 = \frac{1}{6}$ we have that $s \in \La\setminus\Omega_1(m)$ and the result of Lemma \ref{D1-estt} applies, thus $\Ds{\lambda}\bar{\G}(m,s) \lesssim |\omega_s|^{-1}|\omega_{ms}|^{-1}$.  We can exploit this fact by summing the first term by parts and hence consider
\begin{align*}
S_2 = S_{2a} + S_{2b} := &\sum_{m\in\La}-\divo\left( D\eta(m)\odot AD_{\tau}\G^{\rm hom}(m-l)\right)\Ds{\lambda}\bar{\G}(m,s)\\
+&\sum_{m\in\La}\sum_{\rho\in\Rc}D_{\rho}\eta(m)D_{\rho}D_{\tau}\G^{\rm hom}(m-l)A_{1\rho}\Ds{\lambda}\bar{\G}(m,s),
\end{align*}
where $D\eta(m)\odot AD_{\tau}\G^{\rm hom}(m-l) = \left(D_{\rho}\eta(m)\odot A_{1\rho}D_{\tau}\G^{\rm hom}(m-l)\right)_{\rho\in\Rc}$.
 
For $S_{2b}$ we note that since $|D\eta(m)| \lesssim R^{-1} = |\omega_l|^{-1}|\omega_{ls}^-|^{-1}$, we can estimate
\[
|S_{2b}| \lesssim (1+|\omega_l||\omega^-_{ls}|)^{-1}\sum_{m\in\mathcal{A}_l}(1+|m-l|^{2})^{-1}(1+|\omega_s||\omega^-_{ms}|)^{-1}, 
\]
where again $\mathcal{A}_l = B_{c_2R}(l)\setminus B_{c_1R}(l)$. With the substitution $\xi = \omega(m)$ and the identity in \eqref{csqrt-id}, we thus obtain
\[
|S_{2b}| \lesssim (1+|\zeta||\zeta-\gamma|)^{-1}\int_{\omega(\mathcal{A}_l)}\frac{|\xi|^{2}}{(1+|\gamma||\xi-\gamma|)(1+|\xi-\zeta|^2|\xi+\zeta|^2)}d\xi =: \int_{\omega(\mathcal{A}_l)}{\cb \hat{f}_1(\xi)}d\xi
\]
We carve $\omega(\mathcal{A}_l)$ into $U_{\gamma} :=B_{|\zeta-\gamma|/2}(\gamma)\cap \omega(\mathcal{A}_l)$ and $\omega(\mathcal{A}_l)\setminus U_{\gamma}$, noting that in some cases $U_{\gamma}$ can be empty, but it does not affect the argument. We first note that
\[
\int_{U_{\gamma}}\hat{f}_1d\xi \lesssim \frac{|\zeta|^2}{(1+|\zeta|^3|\zeta-\gamma|^3)}\int_{U_{\gamma}}\frac{1}{1+|\gamma||\xi-\gamma|}d\xi \lesssim (1+|\zeta||\gamma||\zeta-\gamma|^2)^{-1}.
\]
On the other hand, if $\xi \in \mathcal{A}_l\setminus U_{\gamma}$, then  $|\xi-\gamma|\gtrsim |\zeta-\gamma|$. Furthermore, \eqref{csqrt-id} together with how $\mathcal{A}_l$ is defined implies that 
\begin{equation}\label{annulus-comprbl}
|\xi-\zeta||\xi+\zeta| \sim R = |\omega_l|\omega^-_{ls}| = |\zeta||\zeta-\gamma|.
\end{equation}
Hence
\[
\int_{\omega(\mathcal{A}_l)\setminus U_{\gamma}}\hat{f}_1d\xi \lesssim \frac{1}{(1+|\zeta|^3|\gamma||\zeta-\gamma|^4)}\int_{\omega(\mathcal{A}_l)\setminus U_{\gamma}} |\xi|^2 \lesssim (1+|\zeta||\gamma||\zeta-\gamma|^2)^{-1}.
\] 
As a result
\[
|S_{2b}| \lesssim (1+|\zeta||\gamma||\zeta-\gamma|^2)^{-1}.
\]
For $S_{2a}$, when we apply the discrete divergence operator, we use the product rule and obtain two sub-terms
\[
S_{2a}^{(i)} := \sum_{m \in \La}\sum_{\rho\in\Rc} D_{\rho}\eta(m) \left({\cb A_{\rho}D_{\tau}\G^{\rm hom}(m-\rho - l) - A_{\rho}D_{\tau}\G^{\rm hom}(m-l)}\right)\Ds{\lambda}\bar{\G}(m,s) 
\]
and
\[
S_{2a}^{(ii)} := \sum_{m \in \La}\sum_{\rho\in\Rc} \left(D_{\rho}\eta(m-\rho) - D_{\rho}\eta(m)\right){\cb A_{\rho}D_{\tau}\G^{\rm hom}(m-\rho - l)}\Ds{\lambda}\bar{\G}(m,s).
\]
In the first one the additional derivative goes onto $D_{\tau}\G^{\rm hom}(m-l)$ and thus this can be estimated in the same way as $S_{2b}$. For the other sub-term we have the additional derivative on the cut-off function, which leads us to exploit $|D^{2}\eta(m)| \lesssim R^{-2} \lesssim (1+|\omega_l|^2|\omega^-_{ls}|^{2})^{-1}$. Hence
\[
|S_{2a}^{(ii)}| \lesssim (1+|\omega_l|^{2}|\omega^-_{ls}|^{2})^{-1} \sum_{m\in\mathcal{A}_l}(1+|m-l|)^{-1}(1+|\omega_s||\omega_{ms}|)^{-1}.
\]
Similarly to how we argued for $S_{2b}$, we write
\[
|S_{2a}^{(ii)}| \lesssim (1+|\zeta|^2|\zeta-\gamma|^2)^{-1}\int_{\omega(\mathcal{A}_l)}\frac{|\xi|^{2}}{(1+|\gamma||\xi-\gamma|)(1+|\xi-\zeta||\xi+\zeta|)}d\xi =: \int_{\omega(\mathcal{A}_l)}\hat{f}_2(\xi,\zeta,\gamma)d\xi
\]
Looking at sets $U_{\gamma}$ and $\omega(\mathcal{A}_l)\setminus U_{\gamma}$ separately again, we get that 
\[
\int_{U_{\gamma}}\hat{f}_2d\xi \lesssim \frac{|\zeta|^2}{(1+|\zeta|^3|\zeta-\gamma|^3)}\int_{U_{\gamma}}\frac{1}{1+|\gamma||\xi-\gamma|}d\xi \lesssim (1+|\zeta||\gamma||\zeta-\gamma|^2)^{-1},
\]
whereas, again exploiting \eqref{annulus-comprbl}, we have 
\[
\int_{\omega(\mathcal{A}_l)\setminus U_{\gamma}}\hat{f}_2d\xi \lesssim \frac{1}{(1+|\zeta|^3|\gamma||\zeta-\gamma|^4)}\int_{\omega(\mathcal{A}_l)\setminus U_{\gamma}} |\xi|^2 \lesssim (1+|\zeta||\gamma||\zeta-\gamma|^2)^{-1}.
\] 
Hence
\[
|S_{2a}^{(ii)}| \lesssim (1+|\zeta||\gamma||\zeta-\gamma|^2)^{-1},
\]
which concludes the result.\\

\paragraph{Case 2: $\supp \eta \cap \Gamma \neq \emptyset$.}\mbox{}\\
Looking at the corresponding proof in Lemma \ref{D1-estt}, we notice that the result will follow from the same manifold $\Mc$ construction {\cb introduced in \eqref{manifoldM}}, as long as we correctly estimate $J_3(v_+)$ and $J_4(v_+)${\cb , defined in \eqref{J34}, with a formula for $v_+$ given in \eqref{vplus}. We proceed as follows.}

We first note that 
\[
J_3(v_+) \lesssim \sum_{m\in\Gamma_l}(1+|\omega_m|^3|\omega_s||\omega_{ms}^-|^2)^{-1}(1+|\omega_{ml}^-|^2|\omega_{ml}^+|^2)^{-1} =: \sum_{m\in\Gamma_l} g_3(m)
\]
and thus
\[
J_3(v_+) \lesssim g_3(l)\mathbbm{1}_{\Gamma_l}(l) + g_3(s)\mathbbm{1}_{\Gamma_l}(s) + \int_{\tilde{\Gamma}_{l}}g_3(x)dx,
\]
where
\[
g_3(l) = (1+|\omega_l|^{3}|\omega_s|\omega_{ls}^-|^2)^{-1}\quad\text{and}\quad g_3(s)\mathbbm{1}_{\Gamma_l}(s) = (1+|\omega_s|^2|\omega_{ls}^-|^2)^{-1} \lesssim (1+|\omega_l||\omega_s||\omega_{ls}^-|^2)^{-1}.
\]
For the integral term we argue that
\[
\int_{\tilde{\Gamma}_{l}}g_3(x)dx =  \int_{\omega(\tilde{\Gamma}_{l})}\frac{|\xi|}{(1+|\xi|^{3}|\gamma||\xi-\gamma|^2)(1+|\xi-\zeta|^2||\xi+\zeta|^2)}d\xi =: \int_{\omega(\tilde{\Gamma}_{l})}\tilde{g}_3(\xi)d\xi.
\]
As before we now look at regions defined in \eqref{regions2} and observe that 
\[
\int_{\Gamma_{\gamma}}\tilde{g}_3d\xi \lesssim \frac{|\zeta|}{1+|\zeta|^2|\zeta-\gamma|^2}\int_{\Gamma_{\gamma}}\frac{1}{1+|\zeta|^3|\gamma||\xi-\gamma|^2}d\xi \lesssim (1+|\zeta|^4|\gamma||\zeta-\gamma|^2)^{-1},
\]
\[
\int_{\Gamma_{\zeta}}\tilde{g}_3d\xi \lesssim \frac{|\zeta|}{1+|\zeta|^3|\gamma||\zeta-\gamma|^2}\int_{\Gamma_{\zeta}}\frac{1}{1+|\zeta|^2|\xi-\zeta|^2}d\xi \lesssim (1+|\zeta|^4|\gamma||\zeta-\gamma|^2)^{-1}
\]
\[
\int_{\Gamma_{l}'}\tilde{g}_3d\xi \lesssim \frac{1}{(1+|\zeta|^2|\zeta-\gamma|^2)(1+|\zeta|^3|\gamma||\zeta-\gamma|^2)}\int_{\Gamma_{\gamma}}|\xi|d\xi \lesssim (1+|\zeta|^4|\gamma||\zeta-\gamma|^3)^{-1},
\]
thus allowing us to conclude that
\[
J_3(v_+) \lesssim (1+|\zeta||\omega|\zeta-\gamma|^2)^{-1} = (1+|\omega_l||\omega_s||\omega_{ls}^-|^2)^{-1}.
\]
Finally, a corresponding argument can be employed to establish that
\[
J_4(v_+) \lesssim (1+|\zeta||\gamma||\zeta-\gamma|^2)^{-1} = (1+|\omega_l||\omega_s||\omega_{ls}^-|^2)^{-1},
\]
which implies $|S_1| \lesssim (1+|\omega_l||\omega_s||\omega_{ls}^-|^2)^{-1}$ and concludes the proof. 
\end{proof}

The procedure described in Lemma \ref{D1D2annulus} cannot be employed if we are too close or too far away from origin relative to $s$ a new approach is needed. It turns out that for $l \in \Omega_1(s) \cup \Omega_2(s)$ one can obtain a preliminary result in the form of norm estimates.
\begin{lemma}\label{lem-norm-est1}
For any $s$ with $|s|$ large enough and $\tau \in \Rc(s)$, the function  $\bar{g}(m,s) := D_{2,\tau}\bar{\G}(m,s)$ satisfies
\[
\|D_1\bar{g}(\cdot,s)\|_{\ell^2(\Omega_1(s))} \lesssim |\omega_s|^{-2}\quad\text{and}\quad \|D_1\bar{g}(\cdot,s)\|_{\ell^2(\Omega_2(s))} \lesssim |\omega_s|^{-2}.	
\]
\end{lemma}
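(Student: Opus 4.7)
The strategy is to derive a discrete Euler-Lagrange equation for $\bar g(\cdot,s):=D_{2,\tau}\bar\G(\cdot,s)$ and then apply a Caccioppoli-type energy estimate with a cut-off suitably localised in each region. Subtracting the first equation in \eqref{G_min_eq} evaluated at $s+\tau$ from its evaluation at $s$ (and noting that $D_1\bar\G=D_1\tilde\G$) gives, for all $v\in\Hcc$,
$$\sum_{m\in\La}\Dm{}\bar g(m,s)\cdot Dv(m)=v(s+\tau)-v(s)-\sum_{m\in\La}\Dm{}\hat g(m,s)\cdot Dv(m),$$
where $\hat g(m,s):=D_{2,\tau}\hat\G(m,s)$ is pointwise controlled by Lemma \ref{D1D2hatG}. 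A key preliminary ingredient is a pointwise bound on $\bar g$ itself: the variable symmetry $\bar\G(m,s)=\bar\G(s,m)$ implies $\bar g(m,s)=\Dm{\tau}\bar\G(s,m)$, and Lemma \ref{D1-estt} applied with the roles of $l$ and $s$ swapped (valid whenever $|s|\geq |m|/2$) yields $|\bar g(m,s)|\lesssim (1+|\omega_s||\omega^-_{sm}|)^{-1}$. In $\Omega_1(s)$, where $|\omega^-_{sm}|\sim |\omega_s|$, this reduces to $|\bar g(m,s)|\lesssim |\omega_s|^{-2}$.

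For the $\Omega_1(s)$ estimate, pick a smooth cut-off $\eta$ equal to $1$ on $B_{|s|/4}(0)$ and to $0$ outside $B_{|s|/2}(0)$ with $|D^j\eta|\lesssim |s|^{-j}$; for $|s|$ large enough, $\eta(s)=\eta(s+\tau)=0$. Testing the equation above with $v=\eta^2\bar g$, renormalised by a constant so that $v(\hat{x})=0$ (which does not affect $Dv$), annihilates the source contributions. A standard discrete product-rule expansion together with Young's inequality then produces the Caccioppoli bound
$$\sum_{m\in\La}\eta^2|D\bar g|^2\lesssim \sum_{m\in\La}|D\eta|^2|\bar g|^2+\Big|\sum_{m\in\La}\Dm{}\hat g(m,s)\cdot D(\eta^2\bar g)(m)\Big|.$$
The first term on the right is bounded by $|s|^{-2}\cdot |s|^2\cdot |\omega_s|^{-4}=|\omega_s|^{-4}$ via the pointwise bound on $\bar g$ and the size of $\supp D\eta$. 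The predictor term is handled using Lemma \ref{D1D2hatG}, a Cauchy-Schwarz inequality, and the change of variables $\xi=\omega(m)$ together with the region decomposition employed in the proofs of Lemmas \ref{D1-estt} and \ref{D1D2annulus}.

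For $\Omega_2(s)$, the same template is applied with a cut-off $\eta_R$ equal to $1$ on $B_{2R}(0)\setminus B_{3|s|}(0)$ and to zero outside $B_{2R+1}(0)$ and inside $B_{2|s|}(0)$. Testing with $v=\eta_R^2(\bar g-c)$, where $c$ is an asymptotic constant chosen to make the truncation admissible, and letting $R\to\infty$ produces an estimate on $\|D\bar g\|_{\ell^2(\La\setminus B_{3|s|}(0))}$. The thin strip $(B_{3|s|}(0)\setminus B_{3|s|/2}(0))\cap \La\subset \mathcal{A}(s)$ separating this set from $\Omega_2(s)$ is then filled in by summing the pointwise estimate of Lemma \ref{D1D2annulus} over its lattice sites, which contributes at most $|\omega_s|^{-4}$ after a change of variables.

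The main obstacle lies in the $\Omega_2(s)$ case, where $\bar g(\cdot,s)$ need not decay at infinity since it is a finite difference in $s$ of $\bar\G$, which itself belongs to $\Hcc$ only modulo constants. One must either subtract the correct asymptotic value of $\bar g(\cdot,s)$ before testing or control the boundary flux at $|m|=2R$ as $R\to\infty$ by means of the $\ell^2$-integrability of $D\bar\G$. Moreover, obtaining the sharp (loss-free) rate $|\omega_s|^{-2}$ requires the pointwise bound on $\bar g$ in the Caccioppoli annulus to already be of order $|\omega_s|^{-2}$, and in the $\Omega_2$ case this saturates exactly at the boundary of applicability of the variable-symmetric form of Lemma \ref{D1-estt}, so that careful bookkeeping of the interplay between Lemmas \ref{D1-estt}, \ref{D1D2hatG} and \ref{D1D2annulus} is required.
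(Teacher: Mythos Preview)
Your Caccioppoli strategy with localised cut-offs, together with the variable-symmetry import of Lemma~\ref{D1-estt} to bound $|\bar g|$ pointwise on the cut-off annulus, is precisely the paper's approach. Two differences are worth noting. For the predictor term you keep $\sum D_1\hat g\cdot D(\eta^2\bar g)$ in weak form and use Cauchy--Schwarz; the paper instead sums by parts once more to $\sum(H\hat g)\bar g\eta_i^2$ and Taylor-expands $H\hat g$ pointwise, exploiting that $\hat G$ solves the Laplace equation and distinguishing bulk from crack-surface contributions. Your route is simpler and, since $\|D_1\hat g\|^2_{\ell^2(\supp\eta_i)}\lesssim|\omega_s|^{-4}$ already follows from Lemma~\ref{D1D2hatG}, it delivers the same rate. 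For $\Omega_2(s)$, however, you introduce an unnecessary outer cut-off at radius $R\to\infty$ and worry about asymptotic constants and boundary flux; the paper sidesteps this by taking $\eta_2\equiv1$ outside $B_{11|s|/8}(0)$ and $\eta_2\equiv0$ inside $B_{10|s|/8}(0)$, with \emph{no} outer truncation. This is legitimate because $D_1\bar g(\cdot,s)\in\ell^2(\La)$ globally (it is a finite difference in $s$ of functions in $\Hcc$), so $\eta_2^2\bar g$ minus a constant is an admissible test function and the summation by parts carries no boundary-at-infinity term---your concern that $\bar g$ need not decay is then moot. Two minor slips: your $\Omega_1$ cut-off equals $1$ only on $B_{|s|/4}(0)$, which does not cover all of $\Omega_1(s)=B_{|s|/2}(0)\cap\La$; and the strip $B_{3|s|}(0)\setminus B_{3|s|/2}(0)$ you propose to fill in via Lemma~\ref{D1D2annulus} is not contained in $\mathcal{A}(s)$, so that lemma does not apply there as written.
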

\begin{proof}
We begin by noting that the equation that $\bar{g}$ satisfies is given by
\[
H\bar{g}(m,s) = - H\hat{g}(m,s)\quad\text{ for } m \in \Omega_1(s) \cup \Omega_2(s)
\]
where $\hat{g}(m,s) := D_{2,\tau}\hat{\G}(m,s)$. This point-wise equation is obtained from \eqref{G_min_eq} after testing with $v(l) = \delta(l,m)$ (the Kronecker delta) and noting that inside $\Omega_i(s)$ we are away from $s$.

We multiply both sides by $\bar{g}(\cdot,s)\eta^2_1$ or $\bar{g}(\cdot,s)\eta^2_2$ and sum over $m$. The cut-off function $\eta_i$ is defined to be identically 1 inside $\Omega_i(s)$ and to go smoothly and monotonically to zero over an annulus of radius $c_3|s|$ where the choice of $c_3$ ensures that $\dist(\supp \eta_i,s) \sim |s|$. A particular choice of $\eta_1$ and $\eta_2$ that works is as follows: $\eta_1(m)=1$ for $m \in B_{\sfrac{5|s|}{8}}(0)$ and $\eta_1(m) = 0$ for $m \in \La\setminus B_{\sfrac{6|s|}{8}}(0)$. Similarly, $\eta_2(m) = 1$ for $m \in \La\setminus B_{\sfrac{11|s|}{8}}(0)$ and $\eta_2(m) = 0$ for $m \in B_{\sfrac{10|s|}{8}}(0)$. Thus $c_3 = \frac{1}{8}$ and the aforementioned annuli are still at least $\frac{|s|}{4}$ away from $s$.

Note that as a result we have $|D^j\eta_i| \lesssim |s|^{-j} = |\omega_s|^{-2j}$ and also that it is non-zero only in the region where the result from Lemma \ref{D1D2annulus} can be applied. It can be essentially thought of as imposing a boundary condition on a discrete variant of the Poisson equation on $\Omega_i(s)$. Following summation by parts on the left-hand side we get 
\begin{equation}\label{adcompl1}
\sum_{m\in\La} D_1\bar{g}(m,s)\cdot D_1[\bar{g}(m,s)\eta_i^2(m)]	= -\sum_{m\in\La}\left(H\hat{g}(m,s)\right)\bar{g}(m,s)\eta^2_i(m).
\end{equation}
{\cb Furthermore, we can rewrite the left-hand side of \eqref{adcompl1} due to the following discrete product rule identity satisfied by any $f,g\, : \,\La \to \R$,
\[
D_{\rho}[fg](m) = f(m+\rho)D_{\rho}g(m) + f(m)D_{\rho}g(m) = g(m+\rho)D_{\rho}f(m) + g(m)D_{\rho}f(m),
\]
which implies that (suppressing the $s$-dependence of $\bar{g}$ and $i$-dependence of $\eta$ in notation for brevity)
\begin{align*}
D_{\rho}\bar{g}(m)\cdot D_{\rho}[(\bar{g}\eta)\eta](m) = & D_{\rho}\bar{g}(m)\cdot\left[\bar{g}(m+\rho)\eta(m+\rho)D_{\rho}\eta(m) + \eta(m)D_{\rho}[\bar{g}\eta](m)\right]\\
=& D_{\rho}g(m)D_{\rho}\eta(m)\bar{g}(m+\rho)\eta(m+\rho)\\
&+ D_{\rho}\bar{g}(m)^2\eta(m)\eta(m+\rho)
+D_{\rho}\bar{g}(m)\eta(m)\bar{g}(m)D_{\rho}\eta(m).
\end{align*}
A similar calculation reveals that
\[
D[\bar{g}\eta](m)\cdot D[\bar{g}\eta](m) = D_{\rho}\bar{g}(m)\cdot D_{\rho}[(\bar{g}\eta)\eta](m) + D_{\rho}\eta(m)^2\bar{g}(m+\rho)\bar{g}(m),
\] 
which allows us to rewrite \eqref{adcompl1} as }
\begin{equation}\label{norm-est-eq}
\|D_1[\bar{g}(\cdot,s)\eta_i]\|_{\ell^2}^2 = -\sum_{m\in\La}\left(H\hat{g}(m,s)\right)\bar{g}(m,s)\eta^2_i(m) + \sum_{m\in\La}\sum_{\rho\in\Rc}\left(D_{\rho}\eta_i(m)\right)^2\bar{g}(m,s)\bar{g}(m+\rho,s)
\end{equation}
and subsequently we hope to estimate the right-hand side, in particular noting that $|\bar{g}(m+\rho,s)| \sim |\bar{g}(m,s)|$.  

We first deal with terms that are not on the crack surface. For $m\not\in\Gamma$ we know that $H$ coincides with the homogeneous Hessian operator ($\tilde{H}$), {\cb thus a Taylor expansion of
\[
H\hat{g}(m,s) = \sum_{\rho \in \Rc} \Dm{\rho}\hat{g}(m-\rho) - \Dm{\rho}\hat{g}(m)
\]
up to fourth order, as calculated explicitly in the proof of  \cite[Theorem 5.6.]{2017-bcscrew}, }together with the fact that $\hat{\G}(,\cdot,s)$ solves the Laplace equation away from $s$ we can conclude that 
\[
|H\hat{g}(m,s)| \lesssim  \|\nabla_m^4\nabla_s \hat{G}(\cdot,s)\|_{L^{\infty}(B_{1/2}(m))}
\]
and hence, bearing in mind \eqref{hahb},
\[
|H\hat{g}(m,s)| \lesssim h^{(a)}_4(m,s) +h^{(b)}_4(m,s).
\]
If on the other hand $m \in \Gamma$, then {\cb a careful examination of terms in $H\hat{g}(m)$ reveals that 
\[
H\hat{g}(m,s) = 2\sum_{\rho \in \Rc(m)} \Dm{\rho}\hat{g}(m,s),
\]
which, following a Taylor expansion, implies
\begin{align}
|H\hat{g}(m,s)| \lesssim &\left|\sum_{\rho \in \Rc(m)}  \left(2\nabla_m \bar{g}(m,s)\cdot\rho + \nabla^2_m\hat{g}(m,s)[\rho,\rho] + \frac{1}{3}\nabla_m^3\bar{g}(m,s)[\rho,\rho,\rho]\right)\right|\nonumber\\
&+ \|\nabla_m^4\nabla_s \hat{G}(\cdot,s)\|_{L^{\infty}(B_{1/2}(m))}\nonumber\\
&\lesssim \|\nabla_m \bar{g}(m,s) \cdot e_2\|_{L^{\infty}(B_{1/2}(m))} + \|\nabla_m^4\nabla_s \hat{G}(\cdot,s)\|_{L^{\infty}(B_{1/2}(m))},\label{hghatgamma}
\end{align}
where the final line follows from the fact that 
\[
m \in \Gamma_{\pm} \implies \sum_{\rho \in \Rc(m)} 2\nabla_m \bar{g}(m,s)\cdot\rho = \pm 2\nabla_m\bar{g}(m,s)\cdot e_2,
\]
\begin{align*}
m \in \Gamma_{\pm} \implies \sum_{\rho \in \Rc(m)} \nabla^2_m \bar{g}(m,s)[\rho,\rho] &= 2\Delta_m\bar{g}(m,s) - \nabla^2_m\bar{g}(m,s)[\pm e_2,\pm e_2]\\
&=- \nabla^2_m\bar{g}(m,s)[\pm e_2,\pm e_2],
\end{align*}
since $\bar{g}(m,s)$ solve the Laplace equation away from $s$. Finally,  
\[
m \in \Gamma_{\pm} \implies \sum_{\rho \in \Rc(m)} \nabla^3_m \bar{g}(m,s)[\rho,\rho,\rho] = \pm\frac{1}{3} \nabla^3_m\bar{g}(m,s)[\pm e_2,\pm e_2,\pm e_2].
\]
The second term in \ref{hghatgamma} can be estimated as in \eqref{hahb}.  For the first term, using the boundary condition in \eqref{Gpred-eq1}, we can Taylor-expand this around $m_0 \in \Gamma$ vertically aligned with $m$, allowing us to gain one extra derivative. As a result
\[
\|\nabla_m \bar{g}(m,s) \cdot e_2\|_{L^{\infty}(B_{1/2}(m))} \lesssim h^{(a)}_2(m) +h^{(b)}_2(m).
\]}

For $i=1$ we note that in the first term on the right-hand side of \eqref{norm-est-eq} we only sum over $m\in B_{\frac{3|s|}{4}}(0)$, thus Lemma \ref{D1-estt} ensures that $\bar{g}(m,s) \lesssim |\omega_s|^{-1}|\omega^-_{ms}|^{-1}$. Furthermore, recalling \eqref{csqrt-id}, we can rewrite $|\omega^-_{ms}| = |m-s||\omega_{ms}^+|^{-1}$ and exploit the fact in the region of interest $|m-s| \gtrsim |s| = |\omega_s|^{2}$ and $|\omega_{ms}^+| \leq |\omega_m|+|\omega_s|\lesssim |\omega_s|$. Finally, we note that due to placing the defect core at the origin, we always have $m \in \La \implies |m| > 1/\sqrt{2}$. Thus we can estimate
\begin{align}\label{norm-est1}
\sum_{m\in B_{\frac{3|s|}{4}}(0)}h^{(a)}_4(m,s)|\bar{g}(m,s)| &\lesssim \sum_{m\in B_{\frac{3|s|}{4}}(0)} (1+|\omega_m|^7|\omega_s||\omega_{ms}^-|^2)^{-1}(1+|\omega_{ms}^{-}|\omega_s|)^{-1}\\
&\lesssim |\omega_s|^{-5}\sum_{m\in\La}|\omega_m|^{-7}\lesssim |\omega_s|^{-5},\nonumber
\end{align}
\begin{align}\label{norm-est2}
\sum_{m\in B_{\frac{3|s|}{4}}(0)}h^{(b)}_4(m,s)|\bar{g}(m,s)| &\lesssim \sum_{m\in B_{\frac{3|s|}{4}}(0)} (1+|\omega_m|^4|\omega_s||\omega_{ms}^-|^5)^{-1}(1+|\omega_{ms}^{-}||\omega_s|)^{-1}\\
&\lesssim |\omega_s|^{-8}\sum_{{\cb m\in B_{\frac{3|s|}{4}}(0)}}|\omega_m|^{-4} \lesssim |\omega_s|^{-8}\log|\omega_s|,\nonumber
\end{align}
\begin{align}\label{norm-est-bt1}
\sum_{m\in B_{\frac{3|s|}{4}}(0)\cap\Gamma}h^{(a)}_2(m,s)|\bar{g}(m,s)| &\lesssim \sum_{m\in B_{\frac{3|s|}{4}}(0)\cap\Gamma} (1+|\omega_m|^3|\omega_s||\omega_{ms}^-|^2)^{-1}(1+|\omega_{ms}^{-}||\omega_s|)^{-1}\\
&\lesssim |\omega_s|^{-5}\sum_{{\cb m\in B_{\frac{3|s|}{4}}(0)\cap\Gamma}}|\omega_m|^{-3}\lesssim |\omega_s|^{-5},\nonumber
\end{align}
\begin{align}\label{norm-est-bt2}
\sum_{m\in B_{\frac{3|s|}{4}}(0)\cap\Gamma}h^{(b)}_2(m,s)|\bar{g}(m,s)| &\lesssim\sum_{m\in B_{\frac{3|s|}{4}}(0)\cap\Gamma} (1+|\omega_m|^2|\omega_s||\omega_{ms}^-|^3)^{-1}(1+|\omega_{ms}^{-}||\omega_s|)^{-1}\\
&\lesssim|\omega_s|^{-6}\sum_{{\cb m\in B_{\frac{3|s|}{4}}(0)\cap\Gamma}}|\omega_m|^{-2}\lesssim |\omega_s|^{-6}\log|\omega_s|.\nonumber
\end{align}
Similarly, for the boundary term we note that if $i=1$ then we only sum over $m \in B_{\frac{3|s|}{4}}(0)\setminus B_{\frac{5|s|}{8}}(0)$ and we can estimate
\begin{equation}\label{norm-est3}
\sum_{m\in\La}\sum_{\rho\in\Rc(m)}\left(D_{\rho}\eta_i(m)\bar{g}(m,s)\right)^2 \lesssim |\omega_s|^{-8}\sum_m 1 \lesssim |\omega_s|^{-4}.
\end{equation}

For $i=2$ the estimate of boundary term in \eqref{norm-est3} still holds with the only difference being that we now sum over $m \in B_{\frac{11|s|}{8}}(0)\setminus B_{\frac{5|s|}{4}}(0)$. On the other hand, as now we sum over $m \in \La\setminus B_{\frac{5|s|}{4}}(0)$, the result of Lemma \ref{D1-estt} does not apply to  $\bar{g}(m,s)$. However, we can reproduce the proof of Lemma \ref{D1-estt} using a cut-off function with a radius  $R \sim |s|$ and thus obtain
\[
|\bar{g}(m,s)| = |D_{1,\tau}\bar{\G}(s,m)| \lesssim (1+|\omega_s|^{2}+|\omega_s||\omega^-_{ms}|)^{-1} \lesssim (1+|\omega_s|^{2})^{-1},
\]
since $|\omega_{ms}^-| = |m-s||\omega_{ms}^+| \gtrsim |m||(|\omega_m|+|\omega_s|)^{-1} \gtrsim |\omega_m| \gtrsim |\omega_s|$, as in the region of interest $|m|\,$ ($=|\omega_m|^{2}$) and  $|m-s|$ are comparable and $|m-s| \gtrsim |s|$. Thus we can estimate 
\begin{align}\label{norm-est4}
\sum_{m\in \La\setminus B_{\frac{5|s|}{4}}(0)}\left(h^{(a)}_4(m,s\right)|\bar{g}(m,s)| &\lesssim \sum_{m\in \La\setminus B_{\frac{5|s|}{4}}(0)}(1+|\omega_m|^{7}|\omega_s|^{1}|\omega^-_{ms}|^{2})^{-1}(1+|\omega_s|^{2})^{-1}\\ \nonumber
& \lesssim |\omega_s|^{-5}\sum_{m\in \La\setminus B_{\frac{5|s|}{4}}(0)}|\omega_m|^{-7} \lesssim |\omega_s|^{-5}\int_{\frac{5|s|}{4}}^{\infty}r^{-7/2}rdr \lesssim |\omega_s|^{-8}.
\end{align}
Analogues calculations result in 
\begin{equation}\label{norm-est5}
\sum_{m\in \La\setminus B_{\frac{5|s|}{4}}(0)}\left(h^{(b)}_4(m,s\right)|\bar{g}(m,s)| \lesssim |\omega_s|^{-8}.
\end{equation}
and
\begin{equation}\label{norm-est6}
\sum_{m\in \La\setminus B_{\frac{5|s|}{4}}(0)\cap\Gamma}\left(h^{(a)}_2(m,s\right)|\bar{g}(m,s)| \lesssim |\omega_s|^{-6}, \quad \sum_{m\in \La\setminus B_{\frac{5|s|}{4}}(0)\cap\Gamma}\left(h^{(b)}_2(m,s\right)|\bar{g}(m,s)| \lesssim |\omega_s|^{-6}.
\end{equation}
Since the particular choice of the cut-off functions implies that
\[
\Omega_i(s) \subset \{m \in \La\;\colon\; \eta_i(m) = 1\},
\]
we have thus established that
\[
\|D\bar{g}(\cdot,s)\|_{\ell^2(\Omega_i(s))} \leq \|D[\bar{g}(\cdot,s)\eta_i]\|_{\ell^2} \lesssim |\omega_s|^{-2}.
\]
\end{proof}
This concludes the preliminary suboptimal estimates of $\bar{\G}$. Together with Lemma \ref{D1D2hatG}, they make $\G = \hat{\G} + \bar{\G}$ a partially-functioning technical tool for estimating the decay of discrete functions defined on $\Hcc$ in a crack  geometry. In particular, we can use it to improve the decay estimates of $\bar{\G}$ to get better norm estimates over $\Omega_i$.

By looking at the estimates \eqref{norm-est1}-\eqref{norm-est6}, it is evident that we can improve these sub-optimal norm estimates as long as we are able to get a better rate in  \eqref{norm-est3}, with the summation over an annulus near the boundary of $\Omega_1(s)$, namely for $m \in B_{\frac{3|s|}{4}}(0)\setminus B_{\frac{5|s|}{8}}(0)$ and likewise over an annulus near the boundary of $\Omega_2(s)$, namely for $m \in B_{\frac{11|s|}{8}}(0)\setminus B_{\frac{5|s|}{4}}(0)$. This is possible if, instead of using a cut-off function and the homogeneous lattice Green's function $\G^{\rm hom}$, we employ $\G$. 
\begin{lemma}\label{lem-norm-est2}
Let $\tilde{s} \in \La$ be such that $|\tilde{s}|$ is large enough. If $m \in \La$ is such that $m \in B_{\frac{3|\tilde{s}|}{4}}(0)\setminus B_{\frac{5|\tilde{s}|}{8}}(0)$ or $m \in B_{\frac{11|\tilde{s}|}{8}}(0)\setminus B_{\frac{5|\tilde{s}|}{4}}(0)$, then 
\[
|\bar{g}(m,\tilde{s})| \lesssim |\omega(\tilde{s})|^{-3},
\]
where $\bar{g}(m,\tilde{s}) = \Ds{\tau}\bar{\G}(m,\tilde{s})$.
\end{lemma}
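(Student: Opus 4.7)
The plan is to upgrade the pointwise estimate on $\bar{g}(m,\tilde{s}) = \Ds{\tau}\bar{\G}(m,\tilde{s})$ from the rate $|\omega_{\tilde{s}}|^{-2}$ that would follow from Lemma \ref{D1-estt} applied via variable symmetry (too coarse, since in the specified annular regions $|\omega_m|$, $|\omega_{\tilde{s}}|$ and $|\omega^-_{m\tilde{s}}|$ are all comparable to $|\omega_{\tilde{s}}|$) to the sharper $|\omega_{\tilde{s}}|^{-3}$. The extra factor of $|\omega_{\tilde{s}}|^{-1}$ is to be extracted by rerunning the cut-off construction of Lemma \ref{D1-estt} while now exploiting the refined information provided by Lemma \ref{D1D2annulus} and Lemma \ref{lem-norm-est1}, which were unavailable at the time of Lemma \ref{D1-estt}.

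First, by the variable symmetry $\bar{\G}(m,\tilde{s}) = \bar{\G}(\tilde{s},m)$ established at the end of the proof of Theorem \ref{thm::G}, reframe the task as estimating $\Dm{\tau}\bar{\G}(\tilde{s}, m)$, placing us in the setting of Lemma \ref{D1-estt} with $\tilde{s}$ in the role of $l$ and $m$ in the role of $s$. Choose the cut-off radius $R = |\omega_{\tilde{s}}||\omega^-_{\tilde{s}m}|$, which in our regime is comparable to $|\tilde{s}|$, together with sufficiently small constants $c_1 < c_2$ so that the cut-off support $B_{c_2R}(\tilde{s})$ lies inside the annulus $\mathcal{A}(m)$ from Lemma \ref{D1D2annulus}; this is geometrically possible because $|s'-\tilde{s}|\lesssim R$ forces $|s'| \sim |\tilde{s}| \sim |m|$ for any $s' \in B_{c_2R}(\tilde{s})$.

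Mimicking the template of Lemma \ref{D1-estt}, write $\Dm{\tau}\bar{\G}(\tilde{s},m) = S_1 + S_2$. The bulk term $S_1$, coming from the equation satisfied by $\bar{\G}(\cdot,m)$, can be estimated verbatim as in Lemma \ref{D1-estt} and delivers $|S_1| \lesssim (1 + |\omega_{\tilde{s}}|^2|\omega^-_{\tilde{s}m}|)^{-1} \lesssim |\omega_{\tilde{s}}|^{-3}$, already at the required rate. The bottleneck is the commutator term $S_2$, which in Lemma \ref{D1-estt} only gave $(1 + |\omega_{\tilde{s}}||\omega^-_{\tilde{s}m}|)^{-1} \sim |\omega_{\tilde{s}}|^{-2}$ because of the crude bound $\|D_1\bar{\G}(\cdot,m)\|_{\ell^2} \lesssim 1$ used in the Cauchy--Schwarz step. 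The improvement is obtained by performing an additional summation by parts inside $S_2$, trading the factor $\bar{\G}(s', m)$ for a mixed derivative $\Dm{}\Ds{}\bar{\G}(s', m)$; since $s'$ then lies in the cut-off annulus, which by construction is contained in $\mathcal{A}(m)$, the pointwise bound $\lesssim |\omega_{\tilde{s}}|^{-4}$ from Lemma \ref{D1D2annulus} is available. Combining this with Cauchy--Schwarz and the norm bound $\|D_1\bar{g}(\cdot,\tilde{s})\|_{\ell^2(\Omega_i(\tilde{s}))} \lesssim |\omega_{\tilde{s}}|^{-2}$ from Lemma \ref{lem-norm-est1} (noting that the cut-off annulus is contained in $\Omega_1(\tilde{s})\cup\Omega_2(\tilde{s})$ up to edge effects) supplies the missing $|\omega_{\tilde{s}}|^{-1}$, giving $|S_2| \lesssim |\omega_{\tilde{s}}|^{-3}$.

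The crack-surface case $\supp\eta \cap \Gamma \neq \emptyset$ is recovered via the manifold-reflection construction of Case~2 in the proof of Lemma \ref{D1-estt}: pass to the discrete Riemann surface $\mathcal{M}$ defined in \eqref{manifoldM}, split into positive and negative branches, and estimate the extra boundary contributions $I_3, I_4$ by the same change of variables to $\omega$-coordinates and decomposition into $\Gamma_\gamma, \Gamma_\zeta, \Gamma_l'$ from \eqref{regions2}. The main obstacle is the refinement of $S_2$: carefully combining the pointwise mixed-derivative bound from Lemma \ref{D1D2annulus} with the $\ell^2$ bound from Lemma \ref{lem-norm-est1} through summation by parts, and verifying that the additional crack-boundary contributions arising at $\Gamma$ do not introduce logarithmic losses that would destroy the improved rate, in either the bulk or near-crack case.
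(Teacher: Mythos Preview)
Your treatment of $S_1$ is fine, but the proposed improvement of $S_2$ does not work as stated. In the decomposition you set up, the summed variable is $s'$ (running over the cut-off annulus around $\tilde{s}$) and the object appearing is $\bar{\G}(s',m)$ together with its first-variable difference $\Dm{\rho}\bar{\G}(s',m)$. Summation by parts in $s'$ can only add or remove a \emph{first-variable} difference; it cannot manufacture a derivative in the fixed variable $m$. Hence there is no mechanism by which you can ``trade $\bar{\G}(s',m)$ for a mixed derivative $\Dm{}\Ds{}\bar{\G}(s',m)$'', and the pointwise bound of Lemma~\ref{D1D2annulus} is not available here. Relatedly, your invocation of Lemma~\ref{lem-norm-est1} is misapplied: that lemma controls $\|D_1\Ds{\tau}\bar{\G}(\cdot,s)\|_{\ell^2(\Omega_i(s))}$, i.e.\ the $\ell^2$-norm of the \emph{mixed} derivative, whereas the object in your $S_2$ is $\bar{\G}(\cdot,m)$ or $\Dm{}\bar{\G}(\cdot,m)$ with second argument $m$, not $\tilde{s}$. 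Your scheme is essentially the one the paper uses in Lemma~\ref{D1D2annulus}, but there the starting quantity already carries a $\Ds{\lambda}$, which is what makes the $S_2$ argument go through via Lemma~\ref{D1-estt}; a single derivative $\Dm{\tau}\bar{\G}(\tilde{s},m)$ has no such spare factor.

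The paper takes a genuinely different route: it abandons $\G^{\rm hom}$ and cut-offs altogether and instead tests against the full crack Green's function, writing
\[
\Dm{\tau}\bar{\G}(l,s)=\sum_{m\in\La}\Dm{}\bar{\G}(m,s)\cdot\Dm{}g(m,l),\qquad g=\Ds{\tau}\G(\cdot,l)=\hat{g}+\bar{g},
\]
and then using \eqref{G_min_eq} to bound this by $\sum_i I_i(\hat{g})+I_i(\bar{g})$. The $\hat{g}$-terms are handled by the explicit decay of Lemma~\ref{D1D2hatG}; the $\bar{g}$-terms are split over $\Omega_1(l),\mathcal{A}(l),\Omega_2(l)$, with Lemma~\ref{D1D2annulus} supplying a pointwise bound on $\mathcal{A}(l)$ and Cauchy--Schwarz together with Lemma~\ref{lem-norm-est1} handling $\Omega_i(l)$. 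This is how the extra factor $|\omega_l|^{-1}$ actually enters, through the norm bound $\|D_1\bar{g}(\cdot,l)\|_{\ell^2(\Omega_i(l))}\lesssim|\omega_l|^{-2}$ --- note the second argument is $l$ (your $\tilde{s}$), which now appears naturally because $g$ is built from $\G(\cdot,l)$, not from $\G^{\rm hom}$.
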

\begin{proof}
Since $\bar{g}(m,\tilde{s}) = \Dm{\tau}\bar{\G}(\tilde{s},m)$, we change the notation to keep it in line with previous proofs by letting $l = \tilde{s}$ and $m = s$. Consequently we will in fact estimate $\Dm{\tau}\bar{\G}(l,s)$ for $l \in B_{\frac{8|s|}{5}}(0)\setminus B_{\frac{4|s|}{3}}(0)$ and $l \in B_{\frac{4|s|}{5}}(0)\setminus B_{\frac{8|s|}{11}}(0)$ with the hope that we can conclude that $\Dm{\tau}\bar{\G}(l,s)\lesssim |\omega_l|^{-3}$.  With $\G$ and its suboptimal decay established, we can write
\[
\Dm{\tau}\bar{\G}(l,s) = \sum_{m\in\La}\left( H\Ds{\tau} \G(m,l)\right)\bar{\G}(m,s) = \sum_{m\in\La}\Dm{}\bar{\G}(m,s)\cdot\Dm{}g(m,l),
\]
where  $g = \hat{g} + \bar{g}$ defined in Lemma \ref{lem-norm-est1}. Noting that both $\hat{g},\bar{g} \in \Hcc$, we exploit the fact that $\bar{\G}$ satisfies \eqref{G_min_eq} and similarly to to the strategy employ in the proof of Lemma \ref{D1-estt}, we conclude that
\begin{equation}\label{D1-better1}
|\Dm{\tau}\bar{\G}(l,s)| \lesssim \sum_{i=1}^4 I_i(\hat{g}) + I_i(\bar{g}), 
\end{equation}
where the terms are as in \eqref{I12}-\eqref{I34}. Recalling that Lemma \ref{D1D2hatG}  establishes that $|\Dm{\rho}\hat{g}(m,l)| = |\Dm{\rho}\Ds{\tau}\hat{\G}(m,l)| \lesssim (1+|\omega(m)||\omega(l)||\omega_{ml}|^{2})^{-1}$ and noting the fact that in the region of interest $|l-s|$, $|l|$, and $|s|$ are all comparable, we can directly estimate four summands corresponding to this term arguing as in the proof of Lemma \ref{D1-estt}. We begin by writing 
\[
I_1(\hat{g}) \lesssim \sum_{m\in\La}(1+|\omega_m|^{5}|\omega^-_{ms}|)^{-1}(1+|\omega_m||\omega_l||\omega^-_{ml}|^{2})^{-1} =: \sum_{m\in\La} h_1(m). 
\]
We first observe that 
\[
\sum_{m\in\La} h_1(m) \lesssim h_1(l) + h_1(s) + \int_{D}\tilde{h}_1({\cb \xi})d\xi,\quad\text{where }\tilde{h}_1(\xi):= \frac{|\xi|^{2}}{(1+|\xi|^{5}|\xi-\gamma|)(1+|\xi||\zeta||\xi-\zeta|^{2})}
\]
and
\[
D:= \R^2_+\setminus (B_{\epsilon_*}(0)\cup B_1(\zeta) \cup B_1(\gamma)),
\]
with $\epsilon_* = \sqrt{1/\sqrt{2}}$. It is clear to see that 
\[
h_1(l) = (1+|\omega_l|^5|\omega_{ls}^-|)^{-1} \lesssim |\omega_l|^{-6},\quad h_1(s) = (1+|\omega_s||\omega_l||\omega_{ls}^-|^2)^{-1} \lesssim |\omega_l|^{-4}
\]
and away from the spikes we would like to estimate the integral term separately for regions close to the origin, $\gamma$ and $\zeta$ separately. To this end, we define radii 
\begin{align}\label{radii1}
R_0 &:= \min \{|\gamma|,|\zeta|\}, R_{\gamma} := \min\{|\gamma|,|\gamma-\zeta|\}, R_{\zeta}:=\min\{|\zeta|,|\zeta-\gamma|\},\\
R_1 &:= \max\{|\zeta|,|\gamma|\} + \frac{\max\{R_{\zeta}, R_{\gamma}\}}{2}\nonumber
\end{align}
and look at
\begin{align}\label{regions3}
\Omega_0 &:= (D \cap B_{\frac{R_0}{2}}(0)),\quad \Omega_{\gamma} := (D\cap B_{\frac{R_{\gamma}}{2}}(\gamma)),\quad \Omega_{\zeta} := (D \cap B_{\frac{R_{\zeta}}{2}}(\zeta)),\\ 
\Omega_1 &:= (D_+ \cap B_{R_1}(0)) \setminus (\Omega_0 \cup\Omega_{\gamma}\cup\Omega_{\zeta}),\quad \Omega' := (D\setminus B_{R_1}(0)). \nonumber
\end{align}
Exploiting the spatial properties of each of these sets and that $|\gamma|, |\zeta|$ and $|\gamma-\zeta|$ are all comparable, we can conclude that
\[
\int_{\Omega_0} \tilde{h}_1 d\xi \lesssim \int_{\Omega_0}\frac{|\xi|^2}{1 +|\xi|^6|\gamma|^4}d\xi \lesssim |\gamma|^{-4}\int_{\epsilon_*}^{\frac{R_0}{2}} \frac{1}{r^3}dr \lesssim |\zeta|^{-4},
\]
\[
\int_{\Omega_{\gamma}} \tilde{h}_1 d\xi \lesssim |\gamma|^{-2}\int_{\Omega_{\gamma}}\frac{1}{1 +|\gamma|^5|\xi-\gamma|}d\xi \lesssim |\gamma|^{-2}\int_{1}^{\frac{R_{\gamma}}{2}} \frac{r}{1+|\gamma|^5r}dr \lesssim |\gamma|^{-7}R_{\gamma} \lesssim |\zeta|^{-6},
\]
\[
\int_{\Omega_{\zeta}} \tilde{h}_1 d\xi \lesssim |\gamma|^{-4}\int_{\Omega_{\zeta}}\frac{1}{1 +|\gamma|^2|\xi-\zeta|^2}d\xi \lesssim |\gamma|^{-4}\int_{1}^{\frac{R_{\gamma}}{2}} \frac{r}{1+|\gamma|^2r^2}dr \lesssim |\gamma|^{-6}\log(R_{\zeta}) \lesssim |\zeta|^{-6}\log|\zeta|,
\]
\[
\int_{\Omega_1} \tilde{h}_1 d\xi \lesssim |\gamma|^{-4}\int_{\Omega_1}|\xi|^{-4} d\xi \lesssim |\gamma|^{-4}\int_{\frac{R_0}{2}}^{R_1} \frac{1}{r^3}dr \lesssim |\zeta|^{-6},
\]
\[
\int_{\Omega'} \tilde{h}_1 d\xi \lesssim |\gamma|^{-4}\int_{\Omega'}|\xi|^{-4} d\xi \lesssim |\gamma|^{-4}\int^{\infty}_{R_1} \frac{1}{r^3}dr \lesssim |\zeta|^{-6}.
\]
Likewise for the second term we begin by saying
\[
I_2(\hat{g}) \lesssim \sum_{m\in\La}(1+|\omega(m)|^{3}|\omega^-_{ms}|^3)^{-1}(1+|\omega_m||\omega(l)||\omega^-_{ml}|^{2})^{-1} =: \sum_{m\in\La} h_2(m)
\]
and 
\[
\sum_{m\in\La}h_2(m) \lesssim h_2(l) + h_2(s) + \int_D\tilde{h}_2d\xi,\quad\text{where }\tilde{h}_2(\xi) := \frac{|\xi|^{2}}{(1+|\xi|^{3}|\xi-\gamma|^3)(1+|\xi||\zeta||\xi-\zeta|^{2})}.
\]
It is clear that
\[
h_2(l) = (1+|\omega_l|^3|\omega_{ls}^-|^3)^{-1}\lesssim |\omega_l|^{-6},\quad h_2(s) = (1+|\omega_s||\omega_l||\omega_{ls}^-|^2)^{-1} \lesssim |\omega_l|^{-4}
\]
and for the integral term we look at regions defined in \eqref{regions3} again to obtain
\[
\int_{\Omega_0} \tilde{h}_2 d\xi \lesssim \int_{\Omega_0}\frac{|\xi|^2}{1 +|\xi|^4|\gamma|^6}d\xi \lesssim |\gamma|^{-6}\int_{\epsilon_*}^{\frac{R_0}{2}} \frac{1}{r}dr \lesssim |\zeta|^{-6}\log|\zeta|,
\]
\[
\int_{\Omega_{\gamma}} \tilde{h}_2 d\xi \lesssim |\gamma|^{-2}\int_{\Omega_{\gamma}}\frac{1}{1 +|\gamma|^3|\xi-\gamma|^3}d\xi \lesssim |\gamma|^{-2}\int_{1}^{\frac{R_{\gamma}}{2}} \frac{r}{1+|\gamma|^3r^3}dr \lesssim |\zeta|^{-5},
\]
\[
\int_{\Omega_{\zeta}} \tilde{h}_2 d\xi \lesssim |\gamma|^{-4}\int_{\Omega_{\gamma}}\frac{1}{1 +|\gamma|^2|\xi-\zeta|^2}d\xi \lesssim |\gamma|^{-4}\int_{1}^{\frac{R_{\gamma}}{2}} \frac{r}{1+|\gamma|r^2}dr \lesssim |\gamma|^{-6}\log(R_{\zeta}) \lesssim |\zeta|^{-6}\log|\zeta|,
\]
\[
\int_{\Omega_1} \tilde{h}_2 d\xi \lesssim |\gamma|^{-6}\int_{\Omega_1}|\xi|^{-2} d\xi \lesssim |\gamma|^{-6}\int_{\frac{R_0}{2}}^{R_1} \frac{1}{r}dr \lesssim |\zeta|^{-6}\log|\zeta|,
\]
\[
\int_{\Omega'} \tilde{h}_2 d\xi \lesssim |\gamma|^{-1}\int_{\Omega'}|\xi|^{-7} d\xi \lesssim |\gamma|^{-1}\int^{\infty}_{R_1} \frac{1}{r^6}dr \lesssim |\zeta|^{-6}.
\]
The same strategy applies to the boundary terms, as we can write
\[
I_3(\hat{g}) \lesssim \sum_{m\in\Gamma}(1+|\omega(m)|^{3}|\omega_{ms}|^1)^{-1}(1+|\omega_m||\omega(l)||\omega_{ml}|^{2})^{-1} =: \sum_{m\in\Gamma} h_3(m),
\]
and 
\[
\sum_{m\in\La}h_3(m) \lesssim h_3(l)\mathbbm{1}_{\Gamma}(l) + h_3(s)\mathbbm{1}_{\Gamma}(s) + \int_D\tilde{h}_3d\xi,\quad\text{where }\tilde{h}_3(\xi) := \frac{|\xi|}{(1+|\xi|^{3}|\xi-\gamma|)(1+|\xi||\zeta||\xi-\zeta|^{2})}.
\]
It is clear that
\[
h_3(l) = (1+|\omega_l|^3|\omega_{ls}^-|)^{-1}\lesssim |\omega_l|^{-4},\quad h_3(s) = (1+|\omega_s||\omega_l||\omega_{ls}^-|^2)^{-1} \lesssim |\omega_l|^{-4}
\]
and also
\[
\int_{\Omega_0\cap\omega(\Gamma)} \tilde{h}_3 d\xi \lesssim \int_{\Omega_0\cap\omega(\Gamma)}\frac{|\xi|}{1 +|\xi|^4|\gamma|^4}d\xi \lesssim |\gamma|^{-4}\int_{\epsilon_*}^{\frac{R_0}{2}} \frac{1}{r^3}dr \lesssim |\zeta|^{-4},
\]
\[
\int_{\Omega_{\gamma}\cap\omega(\Gamma)} \tilde{h}_3 d\xi \lesssim |\gamma|^{-3}\int_{\Omega_{\gamma}\cap\omega(\Gamma)}\frac{1}{1 +|\gamma|^3|\xi-\gamma|}d\xi \lesssim |\gamma|^{-3}\int_{1}^{\frac{R_0}{2}} \frac{1}{|\gamma|^{3}r}dr \lesssim |\zeta|^{-6},
\]
\[
\int_{\Omega_{\zeta}\cap\omega(\Gamma)} \tilde{h}_3 d\xi \lesssim |\gamma|^{-3}\int_{\Omega_{\zeta}\cap\omega(\Gamma)}\frac{1}{1 +|\gamma|^2|\xi-\zeta|^2}d\xi \lesssim |\gamma|^{-3}\int_{1}^{\frac{R_0}{2}} \frac{1}{|\gamma|^{2}r^2}dr \lesssim |\zeta|^{-5},
\]
\[
\int_{\Omega_1\cap\omega(\Gamma)} \tilde{h}_3 d\xi \lesssim |\gamma|^{-4}\int_{\Omega_1\cap\omega(\Gamma)}|\xi|^{-3} d\xi \lesssim |\gamma|^{-4}\int_{\frac{R_0}{2}}^{R_1} \frac{1}{r^3}dr \lesssim |\zeta|^{-6},
\]
\[
\int_{\Omega'\cap\omega(\Gamma)} \tilde{h}_3 d\xi \lesssim |\gamma|^{-4}\int_{\Omega'\cap\omega(\Gamma)}|\xi|^{-3} d\xi \lesssim |\gamma|^{-4}\int^{\infty}_{R_1} \frac{1}{r^3}dr \lesssim |\zeta|^{-6}.
\]
Finally, 
\[
I_4(\hat{g}) \lesssim \sum_{m\in\Gamma}(1+|\omega(m)|^{2}|\omega_{ms}|^2)^{-2}(1+|\omega_m||\omega(l)||\omega_{ml}|^{2})^{-1} =: \sum_{m\in\Gamma} h_4(m),
\]
and 
\[
\sum_{m\in\La}h_4(m) \lesssim h_4(l)\mathbbm{1}_{\Gamma}(l) + h_4(s)\mathbbm{1}_{\Gamma}(s) + \int_D\tilde{h}_4d\xi,\quad\text{where }\tilde{h}_4(\xi) := \frac{|\xi|}{(1+|\xi|^{2}|\xi-\gamma|^2)(1+|\xi||\zeta||\xi-\zeta|^{2})}.
\]
It is clear that
\[
h_4(l) = (1+|\omega_l|^2|\omega_{ls}^-|^2)^{-1}\lesssim |\omega_l|^{-4},\quad h_4(s) = (1+|\omega_s||\omega_l||\omega_{ls}^-|^2)^{-1} \lesssim |\omega_l|^{-4}
\]
and also
\[
\int_{\Omega_0\cap\omega(\Gamma)} \tilde{h}_4 d\xi \lesssim \int_{\Omega_0\cap\omega(\Gamma)}\frac{|\xi|}{1 +|\xi|^3|\gamma|^5}d\xi \lesssim |\gamma|^{-5}\int_{\epsilon_*}^{\frac{R_0}{2}} \frac{1}{r^2}dr \lesssim |\zeta|^{-5},
\]
\[
\int_{\Omega_{\gamma}\cap\omega(\Gamma)} \tilde{h}_4 d\xi \lesssim |\gamma|^{-3}\int_{\Omega_{\gamma}\cap\omega(\Gamma)}\frac{1}{1 +|\gamma|^2|\xi-\gamma|^2}d\xi \lesssim |\gamma|^{-3}\int_{1}^{\frac{R_0}{2}} \frac{1}{|\gamma|^{2}r^2}dr \lesssim |\zeta|^{-5},
\]
\[
\int_{\Omega_{\zeta}\cap\omega(\Gamma)} \tilde{h}_4 d\xi \lesssim |\gamma|^{-3}\int_{\Omega_{\zeta}\cap\omega(\Gamma)}\frac{1}{1 +|\gamma|^2|\xi-\zeta|^2}d\xi \lesssim |\gamma|^{-3}\int_{1}^{\frac{R_0}{2}} \frac{1}{|\gamma|^{2}r^2}dr \lesssim |\zeta|^{-5},
\]
\[
\int_{\Omega_1\cap\omega(\Gamma)} \tilde{h}_4 d\xi \lesssim |\gamma|^{-5}\int_{\Omega_1\cap\omega(\Gamma)}|\xi|^{-2} d\xi \lesssim |\gamma|^{-5}\int_{\frac{R_0}{2}}^{R_1} \frac{1}{r}dr \lesssim |\zeta|^{-6},
\]
\[
\int_{\Omega'\cap\omega(\Gamma)} \tilde{h}_4 d\xi \lesssim |\gamma|^{-5}\int_{\Omega'\cap\omega(\Gamma)}|\xi|^{-2} d\xi \lesssim |\gamma|^{-5}\int^{\infty}_{R_1} \frac{1}{r}dr \lesssim |\zeta|^{-6}.
\]
Since in each estimate we get at least $|\zeta|^{-4} = |\omega_l|^{-4}$, we can conclude that
\[
\sum_{i=1}^4 I_i(\hat{g}) \lesssim |\omega_l|^{-4}.
\]
For the other four terms on the right-hand side of \eqref{D1-better1}, in light of Lemma \ref{lem-norm-est1}, we look separately at the summation over $\Omega_1(l)$, $\Omega_2(l)$ and $\mathcal{A}(l)$. The first two we investigate in detail, but for the sum over $\mathcal{A}(l)$, we simply note that in there we have a point-wise estimate $\Dm{\rho}\bar{g}(m,l) \lesssim (1+|\omega(m)||\omega(l)||\omega^-_{ml}|^{2})^{-1}$, as established in Lemma \ref{D1D2annulus}, so the above estimates translate verbatim. Due to the spatial restriction on $l$ relative to $s$, we have that $m \in \Omega_i(l) \implies |m-s| \gtrsim |s|$, which also implies that $|\omega_{ms}^-| = |m-s||\omega_{ms}^+| \gtrsim |\omega_s|^2(|\omega_m|+|\omega_s|)^{-1}$. As a result we can conclude that
\[
I_1(\bar{g}) \lesssim \sum_{i=1}^2\left(\sum_{m \in \Omega_i(l)}|\omega_m|^{-10}|\omega^-_{ms}|^{-2}\right)^{1/2}\|D\bar{g}\|_{\ell^2(\Omega_i(l))} + |\omega_l|^{-4} \lesssim |\omega_l|^{-3}.
\]
An analogous argument for the remaining terms reveals that 
\[
I_2(\bar{g}) \lesssim \sum_{i=1}^2\left(\sum_{m \in \Omega_i(l)}|\omega_m|^{-6}|\omega^-_{ms}|^{-6}\right)^{1/2}\|D\bar{g}\|_{\ell^2(\Omega_i(l))} + |\omega_l|^{-4} \lesssim |\omega_l|^{-4},
\]
\[
I_3(\bar{g}) \lesssim \sum_{i=1}^2\left(\sum_{m \in \Omega_i(l)}|\omega_m|^{-6}|\omega^-_{ms}|^{-2}\right)^{1/2}\|D\bar{g}\|_{\ell^2(\Omega_i(l))} + |\omega(l)|^{-4} \lesssim |\omega_l|^{-3},
\]
\[
I_4(\bar{g}) \lesssim \sum_{i=1}^2\left(\sum_{m \in \Omega_i(l)}|\omega_m|^{-4}|\omega^-_{ms}|^{-4}\right)^{1/2}\|D\bar{g}\|_{\ell^2(\Omega_i(l))} + |\omega(l)|^{-4} \lesssim |\omega_l|^{-4}.
\]
We have thus estimated each summand in \eqref{D1-better1} and this concludes the proof.
\end{proof}
As a result, we can improve the norm estimates in Lemma \ref{lem-norm-est1} slightly. 
\begin{lemma}\label{lem-norm-est3}
For any $s$ with $|s|$ large enough and $\tau \in \Rc(s)$, the function  $\bar{g}(m,s) := D_{2,\tau}\bar{\G}(m,s)$ satisfies
\[
\|D\bar{g}(\cdot,s)\|_{\ell^2(\Omega_1)} \lesssim |\omega(s)|^{-5/2}\quad\text{and}\quad \|D\bar{g}(\cdot,s)\|_{\ell^2(\Omega_2)} \lesssim |\omega(s)|^{-3}.	
\]
\end{lemma}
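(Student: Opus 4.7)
The plan is to mimic the proof of Lemma \ref{lem-norm-est1} verbatim, but to re-examine the boundary term produced by the cut-off $\eta_i$ using the sharper pointwise estimate established in Lemma \ref{lem-norm-est2}. Specifically, I would start again from the identity
\[
\|D_1[\bar{g}(\cdot,s)\eta_i]\|_{\ell^2}^2 = -\sum_{m\in\La}\bigl(H\hat{g}(m,s)\bigr)\bar{g}(m,s)\eta^2_i(m) + \sum_{m\in\La}\sum_{\rho\in\Rc}\bigl(D_{\rho}\eta_i(m)\bigr)^2\bar{g}(m,s)\bar{g}(m+\rho,s),
\]
with the same cut-offs $\eta_1,\eta_2$ as before. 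Since $\Omega_i(s) \subset \{m : \eta_i(m) = 1\}$, controlling the two terms on the right controls $\|D\bar{g}(\cdot,s)\|_{\ell^2(\Omega_i(s))}^2$.

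For the first (interior) term, no new work is required: the estimates \eqref{norm-est1}--\eqref{norm-est-bt2} and \eqref{norm-est4}--\eqref{norm-est6} already give a bound of order $|\omega_s|^{-5}$ on $\Omega_1(s)$ (the worst contributions being from $h^{(a)}_4$ on the bulk and $h^{(a)}_2$ on $\Gamma$) and $|\omega_s|^{-6}$ on $\Omega_2(s)$. These are exactly the squared rates we need, so no refinement of the first term is needed.

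The improvement comes from the boundary term. Recall that $D\eta_1$ is supported in the annulus $B_{\frac{3|s|}{4}}(0)\setminus B_{\frac{5|s|}{8}}(0)$ and $D\eta_2$ in $B_{\frac{11|s|}{8}}(0)\setminus B_{\frac{5|s|}{4}}(0)$; these are precisely the two regions in which Lemma \ref{lem-norm-est2} (with $\tilde{s}=s$) delivers the pointwise bound $|\bar{g}(m,s)| \lesssim |\omega_s|^{-3}$. Since $|\bar g(m+\rho,s)|\sim|\bar g(m,s)|$ in these annuli, $|D\eta_i|\lesssim |s|^{-1}=|\omega_s|^{-2}$, and each annulus contains $\sim |s|^2 = |\omega_s|^{4}$ lattice points, the boundary term is bounded by
\[
\sum_{m}\bigl(D\eta_i(m)\bigr)^2|\bar g(m,s)|^2 \lesssim |\omega_s|^{-4}\cdot|\omega_s|^{-6}\cdot|\omega_s|^{4} = |\omega_s|^{-6}.
\]
Combining the two contributions gives $\|D_1[\bar g(\cdot,s)\eta_1]\|_{\ell^2}^2\lesssim |\omega_s|^{-5}$ and $\|D_1[\bar g(\cdot,s)\eta_2]\|_{\ell^2}^2\lesssim |\omega_s|^{-6}$, from which the stated bounds follow after taking square roots and restricting to $\Omega_i(s)$.

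The main obstacle is essentially already overcome by Lemma \ref{lem-norm-est2}; the only subtlety is to verify that the annular supports of $D\eta_1$ and $D\eta_2$ lie inside the two specific annuli covered by that lemma, which is true by construction. I do not expect new technical work here beyond careful bookkeeping of powers of $|\omega_s|$, and I would not repeat the integral estimates from Lemma \ref{lem-norm-est1} in detail but rather cite them directly.
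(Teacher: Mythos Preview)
Your proposal is correct and follows essentially the same approach as the paper: re-run the identity \eqref{norm-est-eq} from Lemma \ref{lem-norm-est1}, keep the interior estimates \eqref{norm-est1}--\eqref{norm-est-bt2} and \eqref{norm-est4}--\eqref{norm-est6} unchanged, and use Lemma \ref{lem-norm-est2} to upgrade the cut-off boundary term \eqref{norm-est3} from $|\omega_s|^{-4}$ to $|\omega_s|^{-6}$. Your bookkeeping of powers and the observation that the supports of $D\eta_i$ match the annuli in Lemma \ref{lem-norm-est2} are exactly what the paper relies on.
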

\begin{proof}
With Lemma \ref{lem-norm-est2} in hand, the estimate \eqref{norm-est3} now becomes
\begin{equation}\label{norm-est5}
\sum_{m\in\La}\sum_{\rho\in\Rc(m)}\left(D_{\rho}\eta_i(m)\bar{g}(m,s)\right)^2 \lesssim |\omega(s)|^{-6},
\end{equation}
which is enough to conclude the result, as now the terms with the lowest rate of decay are given by \eqref{norm-est1} and \eqref{norm-est-bt1}, but these only apply to $\Omega_1(s)$. 
\end{proof}
To proceed further we improve upon the estimates in \eqref{norm-est1} and \eqref{norm-est-bt1}.
\begin{lemma}\label{lem-norm-est4}
Let $s \in \La$ be such that $|s|$ is large enough. If $m \in \La$ is such that $m \in B_{\frac{3|s|}{4}}(0)$ then 
\[
|\bar{g}(m,s)|\lesssim |\omega(s)|^{-5/2}.
\]
\end{lemma}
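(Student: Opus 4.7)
The strategy mirrors that of Lemma \ref{lem-norm-est2}. By the variable symmetry of $\bar{\G}$, we may equivalently estimate $D_{1,\tau}\bar{\G}(l,s)$ for $|l|$ large and $|s| \leq 3|l|/4$, and the goal becomes to show $|D_{1,\tau}\bar{\G}(l,s)| \lesssim |\omega_l|^{-5/2}$. The identity
$$D_{1,\tau}\bar{\G}(l,s) = \sum_{m\in\La} D_1\bar{\G}(m,s)\cdot Dg(m,l), \quad g(m,l) := D_{2,\tau}\G(m,l) = \hat{g}(m,l) + \bar{g}(m,l),$$
is obtained exactly as in Lemma \ref{lem-norm-est2}; substituting the Euler--Lagrange equation \eqref{G_min_eq} satisfied by $\bar{\G}(\cdot,s)$ (after a constant shift renders $g(\cdot,l)$ admissible as a test function) then reduces the estimate to
$$|D_{1,\tau}\bar{\G}(l,s)| \lesssim \sum_{i=1}^{4}\bigl(I_i(\hat{g}) + I_i(\bar{g})\bigr),$$
with $I_1,\dots,I_4$ defined in \eqref{I12}--\eqref{I34}.

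For the $I_i(\hat{g})$ I would invoke the decay $|D\hat{g}(m,l)| \lesssim (1+|\omega_m||\omega_l||\omega_{ml}^-|^{2})^{-1}$ from Lemma \ref{D1D2hatG} and decompose each sum as spike contributions at $l$ and $s$ plus an integral in the variable $\xi = \omega_m$, over the regions defined in \eqref{regions3}. The novelty compared to Lemma \ref{lem-norm-est2} is that $|\gamma| := |\omega_s|$ and $|\zeta| := |\omega_l|$ are no longer comparable, so both parameters must be retained explicitly throughout. Since our target rate $|\omega_l|^{-5/2}$ is weaker than the $|\omega_l|^{-4}$ established there, a direct recomputation of the sub-integrals yields $I_i(\hat{g}) \lesssim |\omega_l|^{-5/2}$ uniformly throughout the range $|\gamma| \leq \sqrt{3}|\zeta|/2$.

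For the $I_i(\bar{g})$ I would apply Cauchy--Schwarz after partitioning $\La$ into $\Omega_1(l)$, the annulus $\mathcal{A}(l)$, and $\Omega_2(l)$. On $\mathcal{A}(l)$ the pointwise bound $|D\bar{g}(m,l)| \lesssim (1+|\omega_m||\omega_l||\omega_{ml}^-|^{2})^{-1}$ from Lemma \ref{D1D2annulus} applies and the resulting contribution is controlled exactly as in the $\hat{g}$ case. On $\Omega_1(l)$ and $\Omega_2(l)$ I would substitute the improved norm bounds of Lemma \ref{lem-norm-est3}, namely $\|D\bar{g}(\cdot,l)\|_{\ell^2(\Omega_1(l))} \lesssim |\omega_l|^{-5/2}$ and $\|D\bar{g}(\cdot,l)\|_{\ell^2(\Omega_2(l))} \lesssim |\omega_l|^{-3}$, in place of the earlier $|\omega_l|^{-2}$ bound of Lemma \ref{lem-norm-est1}. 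Since the $\ell^2$-norms of the weights over $\Omega_i(l)$ were already computed in the proof of Lemma \ref{lem-norm-est2}, the Cauchy--Schwarz product delivers $|\omega_l|^{-7/2}$ or better for each $I_i(\bar{g})$, comfortably inside the target. The principal technical obstacle is keeping the $\hat{g}$ integral estimates uniform in $s \in B_{3|l|/4}(0)$, in particular in the regime $|s| = \mathcal{O}(1)$: there the would-be singularity at $\gamma$ in the integrand is harmless, since $|\gamma|\sim 1$ contributes only a constant factor, and a direct case analysis over the regions in \eqref{regions3} confirms the desired rate.
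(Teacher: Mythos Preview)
Your overall strategy matches the paper's: swap variables, reduce to $\sum_i (I_i(\hat g)+I_i(\bar g))$, handle $\hat g$ by the explicit decay and $\bar g$ by Cauchy--Schwarz against the norm bounds of Lemma~\ref{lem-norm-est3}. However, your treatment of the $I_i(\bar g)$ contains a genuine slip. You claim that ``the $\ell^2$-norms of the weights over $\Omega_i(l)$ were already computed in the proof of Lemma~\ref{lem-norm-est2}'' and hence that the Cauchy--Schwarz product gives $|\omega_l|^{-7/2}$. But those weight computations relied on the spatial relation $m\in\Omega_i(l)\Rightarrow |m-s|\gtrsim|s|$, which held there because $|s|\sim|l|$. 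In the present regime $|s|\le\tfrac{3}{4}|l|$, the point $s$ can (and for $|l|$ large, will) lie inside $\Omega_1(l)$, so the weight sum $\sum_{m\in\Omega_1(l)}|\omega_m|^{-10}|\omega_{ms}^-|^{-2}$ is only $\mathcal O(1)$ rather than decaying in $|l|$. The Cauchy--Schwarz product therefore yields at best $\mathcal O(1)\cdot|\omega_l|^{-5/2}=|\omega_l|^{-5/2}$, not $|\omega_l|^{-7/2}$.

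This does not break the argument---$|\omega_l|^{-5/2}$ is exactly the target---but it inverts your diagnosis of where the bottleneck lies. The paper in fact obtains $\sum_i I_i(\hat g)\lesssim|\omega_l|^{-3}$ (using only $|\zeta|\gtrsim|\gamma|$ and $|\zeta|\sim|\zeta-\gamma|$), so the $\hat g$ side is comfortably inside the target and requires no delicate uniformity analysis in small $|s|$; it is the $\bar g$ side that saturates at $|\omega_l|^{-5/2}$, and this is precisely why the subsequent bootstrap (Lemma~\ref{lem-norm-est5}) only gains a factor of two at each step.
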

\begin{proof}
Proceeding as in Lemma \ref{lem-norm-est2}, we will estimate $\Dm{\tau}\bar{\G}(l,s)$ for $l \in \La\setminus B_{\frac{4|s|}{3}}(0)$, which can be achieved by arguing that 
\begin{equation}\label{D1-better2}
|\Dm{\tau}\bar{\G}(l,s)| \lesssim \sum_{i=1}^4 I_i(\hat{g}) + I_i(\bar{g}), 
\end{equation}
where the terms are as in \eqref{I12}-\eqref{I34}. The terms corresponding to $\hat{g}$ can be estimated as in Lemma \ref{lem-norm-est2}, with the only difference being that we longer have $|\zeta| \sim |\gamma|$, but now $|\zeta| \gtrsim |\gamma|$. We still have that $|\zeta| \sim |\zeta-\gamma|$. We can thus readily conclude that 
\[
 \sum_{i=1}^4 I_i(\hat{g}) \lesssim |\zeta|^{-3} = |\omega_l|^{-3}.
\]
For the other four terms we can still write, e.g.
\[
I_1(\bar{g}) \lesssim \sum_{i=1}^2\left(\sum_{m \in \Omega_i(l)}(1+|\omega_m|^{-5}|\omega^-_{ms}|^{-1})^{-2}\right)^{1/2}\|D\bar{g}\|_{\ell^2(\Omega_i(l))} + |\omega_l|^{-4},
\]
but since as $|l|$ grows larger, we eventually have $s \in \Omega_1(l)$, it implies that the summation over $\Omega_1(l)$ is $\mathcal{O}(1)$, thus we can only rely on the result of Lemma \ref{lem-norm-est3}, which tells us that $\|D\bar{g}\|_{\ell^2(\Omega_1(l))} \lesssim |\omega_l|^{-5/2}$ and so it easy to see that we can only conclude that 
\[
 \sum_{i=1}^4 I_i(\bar{g}) \lesssim |\omega_l|^{-5/2}.
\]
\end{proof}
\begin{lemma}\label{lem-norm-est5}
For any $s$ with $|s|$ large enough and $\tau \in \Rc(s)$, and any $\delta > 0$, the function  $\bar{g}(m,s) := D_{2,\tau}\bar{\G}(m,s)$ satisfies
\[
\|D\bar{g}(\cdot,s)\|_{\ell^2(\Omega_1)} \lesssim |\omega(s)|^{-3+\delta}.	
\]
\end{lemma}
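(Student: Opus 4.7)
The plan is to bootstrap on the estimates of Lemma \ref{lem-norm-est3} by feeding in the improved pointwise decay from Lemma \ref{lem-norm-est4}. Re-examining \eqref{norm-est-eq} with $i=1$, the boundary contribution \eqref{norm-est3} was already upgraded to $|\omega_s|^{-6}$ in Lemma \ref{lem-norm-est3} via Lemma \ref{lem-norm-est2}, so the remaining bottlenecks are the interior terms \eqref{norm-est1} and \eqref{norm-est-bt1}, which were of order $|\omega_s|^{-5}$ because Lemma \ref{lem-norm-est1} only had access to the crude bound $|\bar{g}(m,s)| \lesssim |\omega_s|^{-1}|\omega_{ms}^-|^{-1}$ from Lemma \ref{D1-estt}. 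Replacing this by the sharper uniform bound $|\bar{g}(m,s)| \lesssim |\omega_s|^{-5/2}$ on $B_{3|s|/4}(0)$ from Lemma \ref{lem-norm-est4}, while retaining the already computed sums $\sum h^{(a)}_4 \lesssim |\omega_s|^{-3}$ and $\sum h^{(a)}_2 \lesssim |\omega_s|^{-3}$, upgrades each of these two contributions from $|\omega_s|^{-5}$ to $|\omega_s|^{-11/2}$. Since $|\omega_s|^{-11/2}$ still dominates the boundary contribution $|\omega_s|^{-6}$, we obtain $\|D[\bar{g}(\cdot,s)\eta_1]\|_{\ell^2}^2 \lesssim |\omega_s|^{-11/2}$ and hence $\|D\bar{g}(\cdot,s)\|_{\ell^2(\Omega_1(s))} \lesssim |\omega_s|^{-11/4}$.

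To reach $|\omega_s|^{-3+\delta}$ for arbitrary $\delta>0$, I would iterate this construction. Suppose inductively that $\|D\bar{g}(\cdot,s)\|_{\ell^2(\Omega_1(s))} \lesssim |\omega_s|^{-\alpha_k}$ for some $\alpha_k \in (1,3)$; re-running the proof of Lemma \ref{lem-norm-est4} with this sharper norm bound in place of $|\omega_l|^{-5/2}$ upgrades the uniform pointwise estimate to $|\bar{g}(m,s)|\lesssim |\omega_s|^{-\alpha_k}$ on $B_{3|s|/4}(0)$, because the bottleneck term $I_1(\bar{g})$ there is controlled precisely by $\|D\bar{g}\|_{\ell^2(\Omega_1(l))}$ with the Cauchy--Schwarz sum factor being $\mathcal{O}(1)$ in the relevant regime. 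Feeding this back into the energy identity as above produces
\[
\|D\bar{g}(\cdot,s)\|_{\ell^2(\Omega_1(s))}^2 \lesssim |\omega_s|^{-(3+\alpha_k)} + |\omega_s|^{-6},
\]
and since $\alpha_k < 3$ the first term dominates, so the updated exponent is $\alpha_{k+1} := (3+\alpha_k)/2$. Starting from $\alpha_1 = 5/2$ this recursion has fixed point $3$ with $3-\alpha_k = 2^{-k}$, so for any prescribed $\delta > 0$ a finite number $k_\delta$ of iterations suffices to produce $\alpha_{k_\delta} > 3-\delta$, which gives the claimed rate.

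The principal obstacle is verifying that the inductive step really is uniform in $k$: each iteration reuses the same spatial decomposition, the same cut-off construction, and the same quadrature estimates as in Lemmas \ref{lem-norm-est2}--\ref{lem-norm-est4}, with only the exponent entering the pointwise and norm bounds changing; the implicit constants depend only on $\phi$, the radii $c_1,c_2,c_3$, and the lattice, and never on the iteration index. The ceiling value $\alpha_\infty = 3$ matches exactly the decay of $\hat{G}$ supplied by Lemma \ref{D1D2hatG}, which is precisely what saturates the bootstrap and forces the strict inequality $\alpha_k < 3$ at every finite stage, explaining the unavoidable $\delta > 0$ loss in the statement.
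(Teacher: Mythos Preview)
Your proposal is correct and follows essentially the same bootstrap as the paper: identify \eqref{norm-est1} and \eqref{norm-est-bt1} as the bottleneck, use Lemma~\ref{lem-norm-est4} to improve the pointwise factor from $|\omega_s|^{-2}$ to $|\omega_s|^{-5/2}$, deduce $\|D\bar{g}\|_{\ell^2(\Omega_1)}\lesssim|\omega_s|^{-11/4}$, and then iterate via the recursion $\alpha_{k+1}=(3+\alpha_k)/2$ with fixed point $3$. The paper records the same recursion in the closed form $d(k)=3\bigl(\sum_{i=1}^k 2^{-i}+\tfrac{2}{3}\,2^{-k}\bigr)=3-2^{-k}$, and your remarks on uniformity of the implicit constants across iterations and on the saturation at the $\hat{G}$-decay are precisely the points that justify the limit.
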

\begin{proof}
The result of Lemma \ref{lem-norm-est4} in particular implies that the terms \eqref{norm-est1} and \eqref{norm-est-bt1} can now be estimated, respectively, by 
\begin{align}\label{norm-est1-new}
\sum_{m\in B_{\frac{3|s|}{4}}(0)}h^{(a)}_4(m,s)|\bar{g}(m,s)| &\lesssim \sum_{m\in B_{\frac{3|s|}{4}}(0)} (1+|\omega_m|^7|\omega_s||\omega_{ms}^-|^2)^{-1}(1+|\omega_s|^{5/2})^{-1}\\
&\lesssim |\omega_s|^{-11/2}\sum_m|\omega_m|^{-7}\lesssim |\omega_s|^{-11/2},\nonumber
\end{align}
\begin{align}\label{norm-est-bt1-new}
\sum_{m\in B_{\frac{3|s|}{4}}(0)\cap\Gamma}h^{(a)}_2(m,s)|\bar{g}(m,s)| &\lesssim \sum_{m\in B_{\frac{3|s|}{4}}(0)\cap\Gamma} (1+|\omega_m|^3|\omega_s||\omega_{ms}^-|^2)^{-1}(1+|\omega_s|)^{-5/2}\\
&\lesssim |\omega_s|^{-11/2}\sum_{{\cb m\in B_{\frac{3|s|}{4}}(0)\cap\Gamma}}|\omega_m|^{-3}\lesssim |\omega_s|^{-11/2},\nonumber
\end{align}
which in particular, repeating the argument in Lemma \ref{lem-norm-est3}, implies that 
\[
\|D\bar{g}(\cdot,s)\|_{\ell^2(\Omega_1)} \lesssim |\omega(s)|^{-11/4}.	
\]
We can thus redo the argument in Lemma \ref{lem-norm-est4} to conclude that for $|m|\leq \frac{3|s|}{4}$ we have $|\bar{g}(m,s)| \lesssim |\omega_s|^{-11/4}$, which in turn implies that
\[
\|D\bar{g}(\cdot,s)\|_{\ell^2(\Omega_1)} \lesssim |\omega(s)|^{-23/4}.	
\]
It is hence apparent that we can repeat this process ad infinitum with the result after $k$ iterations given by
\[
\|D\bar{g}(\cdot,s)\|_{\ell^2(\Omega_1)} \lesssim|\omega_s|^{-d(k)},
\]
where
\[
d(k) = 3\left(\sum_{i=1}^k \frac{1}{2^i} + \frac{b}{2^{k}}\right),
\]
where $b = \frac{2}{3} < 1$, which ensures that $d(k) < 3\;\forall k \in \N$, but with $\lim_{k\to\infty}d(k) = 3$, thus establishing the premise of the lemma.
\end{proof}
\begin{lemma}\label{D1D2outside2}
If $l \in \Omega_1(s)\cup\Omega_2(s)$, $\tau \in \Rc(l)$, and $\lambda \in \Rc(s)$,  then for any $\delta >0$
\[
|D_{1,\tau}D_{2,\lambda}\bar{\G}(l,s)| \lesssim (1+|\omega_l||\omega_s||\omega^-_{ls}|^{2-\delta})^{-1}.
\]
\end{lemma}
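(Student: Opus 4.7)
The plan is to mirror the strategy of Lemma~\ref{lem-norm-est2}, promoting the estimate from the single derivative $D_{1,\tau}\bar{\G}(l,s)$ to the mixed derivative. Writing $D_{1,\tau}D_{2,\lambda}\bar{\G}(l,s) = D_{1,\tau}\bar{g}_\lambda(l,s)$ with $\bar{g}_\lambda(m,s) := D_{2,\lambda}\bar{\G}(m,s)$, and using the (suboptimal) Green's function $\G = \hat{\G} + \bar{\G}$ as a testing tool, I first establish the identity
\[
D_{1,\tau}\bar{g}_\lambda(l,s) = \sum_{m\in\La} D\bar{g}_\lambda(m,s)\cdot Dg(m,l), \qquad g(m,l) := D_{2,\tau}\G(m,l) = \hat{g}(m,l) + \bar{g}(m,l),
\]
and split the sum according to the decomposition $g = \hat{g} + \bar{g}$.

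For the $\hat{g}$-contribution, I would apply the differentiated variational equation inherited by $\bar{g}_\lambda(\cdot,s)$ from \eqref{G_min_eq} (obtained by differentiating the equation for $\bar{\G}(\cdot,s)$ in the second variable), tested against $\hat{g}(\cdot,l) - \hat{g}(\hat{x},l) \in \Hcc$, which yields
\[
\sum_{m\in\La} D\bar{g}_\lambda(m,s)\cdot D\hat{g}(m,l) = D_{1,\lambda}D_{2,\tau}\hat{\G}(s,l) - \sum_{m\in\La} D_1 D_{2,\lambda}\hat{\G}(m,s)\cdot D\hat{g}(m,l).
\]
The first term on the right is already bounded by $(1+|\omega_l||\omega_s||\omega_{ls}^-|^2)^{-1}$ by Lemma~\ref{D1D2hatG}; the second is the functional $\hat{g}(\cdot,l) \mapsto \sum_i J_i(\hat{g}(\cdot,l))$ with the $J_i$ of \eqref{J12}--\eqref{J34}, which I would evaluate by direct computation using the pointwise bounds of Lemma~\ref{D1D2hatG}, a change of variables $\xi = \omega(m)$, and the five-region decomposition of \eqref{regions3}. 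The well-separated scales $|\omega_l|$ and $|\omega_s|$ arising from $l \in \Omega_1(s) \cup \Omega_2(s)$ ensure the usual bookkeeping (familiar from the proofs of Lemmas~\ref{D1D2annulus} and~\ref{lem-norm-est2}) delivers the target rate.

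For the $\bar{g}$-contribution $\sum_m D\bar{g}_\lambda(m,s)\cdot D\bar{g}(m,l)$, the same variational manoeuvre collapses into a tautology by the symmetry $\bar{g}(s,l) = D_{1,\tau}\bar{\G}(l,s)$, so I would estimate directly by splitting the summation over $\Omega_1(l) \cup \Omega_2(l)$ and the annulus $\mathcal{A}(l)$. On $\mathcal{A}(l)$, the sharp pointwise bound of Lemma~\ref{D1D2annulus} applies to $D\bar{g}_\lambda$ and direct summation reproduces the target rate (after invoking the variable symmetry $\bar{g}(m,l) = D_{1,\tau}\bar{\G}(l,m)$ where needed). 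On $\Omega_i(l)$, Cauchy--Schwarz combined with the norm estimate $\|D\bar{g}_\lambda(\cdot,s)\|_{\ell^2(\Omega_i)} \lesssim |\omega_s|^{-3+\delta}$ from Lemma~\ref{lem-norm-est5}, together with the analogous norm estimate for $D\bar{g}(\cdot,l)$, delivers the target bound at the cost of the arbitrarily small $\delta$-loss.

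The main obstacle is precisely the origin of the $\delta > 0$ loss. The bootstrapping chain in Lemmas~\ref{lem-norm-est1}--\ref{lem-norm-est5} iteratively sharpens the $\ell^2$-norm estimate on $D\bar{g}_\lambda$, but the sequence of exponents $d(k)$ saturates at the decay of $\hat{\G}$; any contribution that must be controlled through an $\ell^2$-norm of $D\bar{g}_\lambda$ therefore inherits this small loss, while the direct estimates coming from $\hat{\G}$ and from Lemma~\ref{D1D2annulus} are sharp. A careful case-split across the five regions of \eqref{regions3} is required to verify that the worst-case combination still yields $(1+|\omega_l||\omega_s||\omega_{ls}^-|^{2-\delta})^{-1}$, with the crack-surface sums in $J_3, J_4$ handled as in Lemma~\ref{D1D2annulus} with the extra $s$-derivative absorbed into the weights $h^{(a)}_2, h^{(b)}_2$ of \eqref{hahb}.
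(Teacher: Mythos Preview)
Your approach is genuinely different from the paper's, and it has a real gap in the $\bar{g}$-contribution.  The paper does \emph{not} mirror Lemma~\ref{lem-norm-est2} here; instead it reverts to the local strategy of Lemma~\ref{D1D2annulus}, writing $D_{1,\tau}D_{2,\lambda}\bar{\G}(l,s)=S_1+S_2$ via the \emph{homogeneous} Green's function $\G^{\rm hom}$ together with a cut-off $\eta$ centred at $l$ of radius $\sim |l|$.  The $S_1$-term is then bounded exactly as in Lemma~\ref{D1D2annulus} by $\sum_i J_i(v)$, while the $S_2$-term reduces to $|\omega_l|^{-2}\|D_1\bar{g}_\lambda(\cdot,s)\|_{\ell^2(\mathcal{A}_l)}$ with $\mathcal{A}_l = B_{|l|/2}(l)\setminus B_{|l|/4}(l)$.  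The crucial geometric point is that $\mathcal{A}_l\subset\Omega_1(s)$ (for $|l|\le |s|/3$), so Lemma~\ref{lem-norm-est5} applies and gives $|\omega_l|^{-2}|\omega_s|^{-3+\delta}$, which is the target since $|\omega_s|\gtrsim |\omega_{ls}^-|$ in this region.  The case $l\in\Omega_2(s)$ then follows by variable symmetry.

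Your plan to estimate $\sum_m D\bar{g}_\lambda(m,s)\cdot D\bar{g}(m,l)$ by splitting over $\Omega_1(l),\mathcal{A}(l),\Omega_2(l)$ and applying Cauchy--Schwarz with the norm bounds of Lemmas~\ref{lem-norm-est3}/\ref{lem-norm-est5} fails for two reasons.  First, on $\mathcal{A}(l)$ you claim the pointwise bound of Lemma~\ref{D1D2annulus} applies to $D\bar{g}_\lambda$, but that lemma requires $m\in\mathcal{A}(s)$, not $m\in\mathcal{A}(l)$; for $l\in\Omega_1(s)$ one typically has $\mathcal{A}(l)\subset\Omega_1(s)$, where no pointwise bound on $D\bar{g}_\lambda$ is available.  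Using instead the norm bound $\|D\bar{g}_\lambda\|_{\ell^2(\Omega_1(s))}\lesssim|\omega_s|^{-3+\delta}$ together with the pointwise bound on $D\bar{g}(\cdot,l)$ gives only $|\omega_s|^{-3+\delta}\cdot O(1)$, since $\|D\bar{g}(\cdot,l)\|_{\ell^2(\mathcal{A}(l))}=O(1)$ (the singularity at $m=l$ contributes), missing the required $|\omega_l|^{-1}$ factor.  Second, $\Omega_2(l)$ extends to infinity and in particular contains $s$ and all of $\mathcal{A}(s)$; there $\|D\bar{g}_\lambda(\cdot,s)\|_{\ell^2}$ is $O(1)$ (the singularity at $m=s$ now contributes) and Cauchy--Schwarz delivers only $O(1)\cdot|\omega_l|^{-3}$, which carries no decay in $|\omega_s|$.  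In short, whenever the two scales $|l|$ and $|s|$ are well separated, a global Cauchy--Schwarz will always pick up one of the two singularities and lose the dependence on the other scale.  The paper's localisation at scale $|l|$ is precisely what decouples the two scales: the cut-off extracts the $|\omega_l|^{-2}$ prefactor, and the remaining norm is taken over a region well inside $\Omega_1(s)$, safely away from $s$.
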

\begin{proof}
Consider $s,l \in \La$ such that $|s|$ is large enough and $|l| \leq \frac{|s|}{3}$ and $|l| \geq 4$. We create a cut-off function $\eta$ that scales with $|l|$, namely we say $\eta \equiv 1$ in $B_{\frac{|l|}{4}}(l)$ and $\eta \equiv 0$ outside $B_{\frac{|l|}{2}}(l)$ and smooth and decreasing in-between. Mimicking the approach in Lemma \ref{D1D2annulus}, we conclude that
\[
\Dm{\tau}\Ds{\lambda}\bar{\G}(l,s)  = S_1 + S_2
\]
where
\[
|S_1| \lesssim \sum_{i=1}^{4}J_i(v) \lesssim (1+|\omega_l||\omega_s||\omega^-_{ls}|^{2})^{-1},
\]
due to a verbatim repetition of the argument in Lemma \ref{D1D2annulus}. Likewise, we can immediately conclude that
\[
|S_2| \lesssim |\omega_l|^{-2}\|D_1\bar{g}(\cdot,s)\|_{\ell^2(\mathcal{A}_l)},
\]
where $\bar{g}(m,s) := \Ds{\tau}\bar{\G}(m,s)$ and $\mathcal{A}_l := B_{\frac{|l|}{2}}(l)\setminus B_{\frac{|l|}{4}}(l)$. Crucially, we observe that in the region under consideration we have $B_{\frac{|l|}{2}}(l) \subset \Omega_1(s)$, thus allowing us to employ Lemma \ref{lem-norm-est5} to conclude that
\[
|S_2|\lesssim (1+|\omega_l|^2|\omega_s|^{3-\delta})^{-1} \lesssim (1+|\omega_l|^2|\omega_s||\omega_{ls}^-|^{2-\delta})^{-1},
\]
where the final passage follows from the fact that we have $|\omega_s|\gtrsim |\omega_{ls}^-|$ in the region of interest. 

We further note that this partial result together with the norm estimate over $\Omega_1(s)$ in Lemma \ref{lem-norm-est5} implies that if $|l| \leq 4$, then
\[
|D_{1,\tau}D_{2,\lambda}\bar{\G}(l,s)| \lesssim |\omega_s|^{-3+\delta} \lesssim |\omega_s|^{-1}|\omega_{ls}^-|^{-2+\delta},
\]
which is precisely the result we want for $l \approx 0$. Finally, for $l$ such that $\frac{|s|}{3}\leq |l| \leq \frac{|s|}{2}$, we simply note that the choice of regions $\Omega_1(s), \mathcal{A}(s)$, and $\Omega_2(s)$ at the start of Section \ref{P-G2} was arbitrary in the sense that we can always choose different constants with the $|s|$ scaling and none of the arguments are  affected except for having to readjust the constants for $|s|$ scaling of the cut-off functions used throughout. Thus we have shown that if $l \in \Omega_1(s)$ then 
\[
|\Dm{\tau}\Ds{\lambda}\bar{\G}(l,s)| \lesssim (1+|\omega_l||\omega_s||\omega^-_{ls}|^{2-\delta})^{-1},
\]
which thanks to variable symmetry of $\bar{\G}$ established in the proof of Theorem \ref{thm::G} implies the same for $l \in \Omega_2(s)$. 
\end{proof}
\begin{proof}[Proof Theorem \ref{thm::G}: decay estimate for the mixed derivative of $\G$]
Lemma \ref{D1D2annulus} and Lemma \ref{D1D2outside2} together establish that for all $l,s\in\La$ and $\tau\in\Rc(l)$, $\lambda\in\Rc(s)$, we have for any $\delta >0$
\[
|\Dm{\tau}\Ds{\lambda}\bar{\G}(l,s)| \lesssim  |\omega(l)|^{-1}|\omega(s)|^{-1}|\omega_{ls}|^{-2+\delta}.
\]
Lemma \ref{D1D2hatG} provides the same result for $\hat{\G}$ (including the case $\delta = 0$) and since $\G = \hat{\G} + \bar{\G}$, then in fact   
\[
|\Dm{\tau}\Ds{\lambda}\G(l,s)| \lesssim  |\omega(l)|^{-1}|\omega(s)|^{-1}|\omega_{ls}|^{-2+\delta},
\]
which is what we set out to prove. 
\end{proof}

\bibliographystyle{myalpha} 
\bibliography{papers}
\end{document}